  \theoremstyle{plain} \newtheorem{thm}{Theorem}[section]
  \theoremstyle{plain} \newtheorem{lem}[thm]{Lemma}
  \theoremstyle{plain} \newtheorem{prop}[thm]{Proposition}
  \theoremstyle{plain} \newtheorem{coro}[thm]{Corollary}
  \theoremstyle{plain} 
  \theoremstyle{plain} 
  \theoremstyle{remark} \newtheorem{rmk}[thm]{Remark}
  \theoremstyle{remark} 
  \theoremstyle{remark}
\newcommand{\beq}{\begin{equation}}
\newcommand{\eeq}{\end{equation}}
\newcommand{\bals}{\begin{align*}}  
\newcommand{\bal}{\begin{align}}
\newcommand{\cu}{\textbf}
\newcommand{\real}{\mathbb{R}}
\newcommand{\cpx}{\mathbb{C}}
\newcommand{\intg}{\mathbb{Z}}
\newcommand{\posint}{\mathbb{N}}
\numberwithin{equation}{section}
\title[On Threshold Solutions of equivariant CSS]{On Threshold Solutions of equivariant Chern-Simons-Schr\"odinger Equation}
\author{Zexing Li}
\address{School of Mathematic Science\\
Peking University\\
Beijing\\ China}
\email{lizexing@pku.edu.cn}
\author{Baoping Liu}
\address{Beijing International Center for Mathematical Research\\
 Peking University\\ 
 Beijing\\
  China}
\email{baoping@math.pku.edu.cn}
\thanks{2010 \textit{AMS Mathematics Subject Classification}.  35Q55.}
\thanks{Keywords:  Chern-Simons-Schr\"{o}dinger equation, self-duality, minimal blowup solutions.}
\begin{document}
\maketitle

\begin{abstract}
We consider the self-dual Chern-Simons-Schr\"odinger model in two spatial dimensions. This problem is $L^2$-critical. Under equivariant setting, global wellposedness and scattering were proved in \cite{liu2016global} for solution with initial charge below certain threshold given by the ground state. In this work, we show that the only non-scattering solutions with threshold charge are exactly the ground state up to scaling, phase rotation and the pseudoconformal transformation. We also obtain partial result for non-self-dual system.
\end{abstract}

\tableofcontents

\section{Introduction}

\subsection{Covariant formulation}
The Chern-Simons-Schr\"odinger equation is a nonrelativistic quantum model describing the dynamics of a large number of charged particles in the plane interacting both directly and via a self-generated electromagnetic field. The model is a Lagrangian field theory on $\real^{1+2}$ associated to the action 
\[ L[A, \phi] = \frac{1}{2} \int_{\real^{1+2}} \left[ \text{Im}(\bar{\phi}\bm{D}_t \phi) + | \bm{D}_x \phi|^2 - \frac{g}{2} |\phi|^4 \right] dxdt + \frac{1}{2} \int_{\real^{1+2}} A \wedge dA \]
Here, $\phi: \real^{1+2} \rightarrow \cpx$ is a scalar field describing the particle system, the potential $A:=A_0 dt + A_1 dx_1 + A_2 dx_2$ is a real-valued 1-form on $\real^{1+2}$, the associated covariant differentiation operators $\bm{D}_\alpha:= \partial_\alpha + i A_\alpha$ for $\alpha \in \{ 0, 1, 2 \}$ are defined in terms of the potential $A$, and $g \in \real$ is a coupling constant. For indices, we use $\alpha = 0$ for time variable and $\alpha = 1, 2$ for spacial variable $x_1, x_2$. The Lagrangian is invariant with respect to the transformation 
\beq \phi \mapsto e^{-i\theta}\phi \quad A \mapsto A + d\theta \label{1gaugeinv} \eeq
for compactly supported real-valued function $\theta(t, x)$.

Computing the Euler-Lagrange equation, we obtain the Chern-Simons-Schr\"odinger equation (CSS)
\beq \left\{ \begin{array}{rl}
\bm{D}_t \phi &= i \bm{D}_l \bm{D}_l \phi + i g |\phi|^2 \phi \\
F_{01} &= -\text{Im} (\bar{\phi} \bm{D}_2 \phi) \\
F_{01} &= \text{Im} (\bar{\phi} \bm{D}_1 \phi) \\
F_{12} &= -\frac{1}{2} |\phi|^2
\end{array} \right. \label{1CSSoriginal}\tag{CSS}
\eeq
where $F = dA$ is the curvature 2-form, namely $F_{\alpha \beta} = \partial_\alpha A_\beta - \partial_\beta A_\alpha$. The system (\ref{1CSSoriginal}) is a basic model of Chern-Simons dynamics \cite{ezawa1991breathing,ezawa1991nonrelativistic,jackiw1991time}. For further physical motivation to study (\ref{1CSSoriginal}), such as quantum Hall effect, high temperature superconductivity and the quantization of Heisenberg ferromagnets, see \cite{deser1982topologically,jackiw1981super,jackiw1991topological,martina1993self,wilczek1990fractional}.

We have conservation laws for charge and energy
 \begin{align}
  \text{chg}[\phi]:=& \int_{\real^2} |\phi|^2 dx,\\
  E[\phi] :=& \int_{\real^2} \left( \frac{1}{2} |\bm{D}_x \phi|^2 - \frac{g}{4} |\phi|^4 \right) dx.
\end{align}
The system is $L^2$-critical in the sense that it admits a scaling transformation leaving the charge of $\phi$ and the equation invariant.
  \beq
  (\phi, A) \mapsto  \left\{ \begin{array}{rl}
\tilde{\phi} (t, x)&:= \lambda \phi (\lambda^2 t, \lambda x),\\
\tilde{A_0} (t, x)&:= \lambda^2 A_0 (\lambda^2 t, \lambda x),\\
\tilde{A_j} (t, x)&:= \lambda A_j(\lambda^2 t, \lambda x).
\end{array} \right. \label{l2scaling}
  \eeq

The property of this system changes when $g$ varies. Via Bogomol'nyi identity (\ref{bogomol}), the dividing point is the \emph{self-dual} case $g=1$, where the energy functional enjoys a complete square structure (\ref{squareenergy}). Generally speaking, self-duality refers to theories in which interactions have particular forms and special strengths such that the second order equation of motion reduces to the first which are simpler to analyze. This feature draws crucial physical importance to  models like self-dual Yang-Mills theory, self-dual Yang-Mills-Higgs theory and   self-dual Chern-Simons theory \cite{dunne2009self}.

In this paper, we impose Coulomb gauge and restrict to the equivariant setting. We first rewrite (\ref{1CSSoriginal}) in the polar coordinates of $\real^2$. Define
 \begin{align*}
   \partial_r = \frac{x_1}{|x|} \partial_1 + \frac{x_2}{|x|}\partial_2,&\qquad \partial_\theta= -x_2 \partial_1 + x_1 \partial_2.
 \end{align*}
 Correspondingly we define 
 \begin{align*}
   A_r = \frac{x_1}{|x|} A_1 + \frac{x_2}{|x|}A_2,&\qquad A_\theta= -x_2 A_1 + x_1 A_2,\\
   \bm{D}_r = \frac{x_1}{|x|}  \bm{D}_1 + \frac{x_2}{|x|} \bm{D}_2,&\qquad \bm{D}_\theta= -x_2  \bm{D}_1 + x_1  \bm{D}_2.   \end{align*}
We can formulate (\ref{1CSSoriginal}) equivalently as
\beq \left\{ \begin{array}{rl}
\bm{D}_t \phi &= i \left( \bm{D}_r^2 +  \frac{1}{r} \bm{D}_r + \frac{1}{r^2} \bm{D}_\theta^2 \right)\phi+ i g |\phi|^2 \phi \\
\partial_t A_r - \partial_r A_0  &= -\frac{1}{r} \text{Im} (\bar{\phi} \bm{D}_\theta \phi) \\
\partial_t A_\theta - \partial_\theta A_0 &= r\text{Im} (\bar{\phi} \bm{D}_r \phi) \\
\partial_r A_\theta - \partial_\theta A_r &= -\frac{1}{2} r |\phi|^2
\end{array} \right.\label{1csspol}
\eeq
with energy taking the form
\beq  
  E[\phi] = \int_{\real^2} \left( \frac{1}{2} |\bm{D}_r \phi|^2 + \frac{1}{2r^2}|\bm{D}_\theta \phi|^2 - \frac{g}{4} |\phi|^4 \right) dx.\label{energy-r}
\eeq

Now we introduce the $m$-equivariant $(m \in \intg)$ ansatz\footnote{We will often denote $u$ the radial part of $\phi$, and won't distinguish unless necessary. We also remark that the equivariant assumption involves the radial case as $m=0$.}.
\beq 
\begin{split}
\phi(t, x) = e^{im\theta}u(t, r), &\quad A_1(t, x) = -\frac{x_2}{r}v(t, r), \\ 
A_2(t, x) = \frac{x_1}{r}v(t, r),& \quad A_0 (t, x) = w(t, r).  \label{mequi}
\end{split}
\eeq
The equivariant solutions of Chern-Simons-Schr\"odinger system are called vortex solutions and appear in various related physical contexts (for instance \cite{paul1986charged, jackiw1990self, de1986electrically}). In addition, as a reasonable and effective simplification, equivariant reduction is also applied commonly to other geometric equations, for example Chern-Simons-Higgs \cite{chen2009symmetric}, wave map \cite{struwe2003equivariant,cote2015characterization} and Schr\"odinger map \cite{chang2000schrodinger,bejenaru2013equivariant}. Also note that our formulation (\ref{mequi}) implicitly indicate that we have chosen the Coulomb gauge condition\footnote{Conversely, this ansatz can be derived from the Coulomb gauge condition plus the equivariant assumption merely on $\phi$. See \cite{kim2019css} for details.} 
 \beq \nabla \cdot A_x = 0. \label{1coulomb} \eeq
Then (\ref{mequi}) and (\ref{1csspol}) imply that 
\beq A_r =0, \quad \partial_r A_0 = \frac{1}{r}\left( m + A_\theta \right)|\phi|^2,\quad \partial_r A_\theta = - \frac{1}{2}r |\phi|^2.\label{1ar0} \eeq
We make the natural boundary condition that $A_0$ decays to zero at spatial infinity (see \cite{berge1995blowing} for further discussion). Hence, we obtain explicit formula for $A_\theta$ and $A_0$
\begin{align}
  A_\theta [u](t, r) &= -\frac{1}{2} \int_{0}^r |u(t, s)|^2 sds,\\
  A_0 [u](t, r) &= -\int^\infty_r (m + A_\theta[u](t, s))|u(t, s)|^2 \frac{ds}{s}.
\end{align}
Now we can rewrite the Chern-Simons-Schr\"odinger equation under the $m$-equivariant assumption as the following $\phi$-evolution
\beq (i\partial_t + \Delta)\phi = \frac{2m}{r^2} A_\theta \phi + A_0 \phi + \frac{1}{r^2}A_\theta^2 \phi - g |\phi|^2\phi, \label{CSS} \tag{eCSS}
\eeq
or the $u$-evolution
\beq (i\partial_t + \Delta_m)u = \frac{2m}{r^2} A_\theta u + A_0 u + \frac{1}{r^2}A_\theta^2 u - g |u|^2 u. \label{uCSS}\eeq
where 
\beq \Delta_m := \partial_{r}^2 + \frac{1}{r} \partial_r - \frac{m^2}{r^2} \eeq
 is the Laplacian for $m$-equivariant functions in $\real^2$. Also, we denote the nonlinear part by 
 \beq F(\phi) := \frac{2m}{r^2} A_\theta \phi + A_0 \phi + \frac{1}{r^2}A_\theta^2 \phi - g |\phi|^2 \phi\label{nl} \eeq
 which is still an $m$-equivariant function. 
In this article, we will focus on (\ref{CSS}). We will further restrict to the physically relavant cases $m \ge 0$ \cite{dunne2009self}.  

\subsection{Known results and threshold problem}

Chern-Simons-Schr\"odinger system (\ref{1CSSoriginal}) has drawn much attention since the 90's. Under the Coulomb gauge, local wellposedness was first established with initial data in $H^2$ by Berg\'e-de Bouard-Saut \cite{berge1995blowing}. For $H^1$ initial data with small charge, they also obtained global existence(but without uniqueness). Huh \cite{huh2013energy} showed that (\ref{1CSSoriginal}) has a unique local-in-time solution for $H^1$ data, without continuous dependence.    Lim \cite{lim2018large} obtained $H^1$ local well-posedneess with weak Lipschitz dependence for small $L^2$ data. Using heat gauge, Liu-Smith-Tataru \cite{liu2014local} established local well-posedness and strong Lipschitz dependence in $H^\epsilon$, $\epsilon > 0$ for small $H^\epsilon$ data. In addition, Oh-Pusateri \cite{oh2015decay} prove global existence and scattering for solutions with small data in weighted Sobolev spaces,  by revealing a cubic null structure under the Coulomb gauge.
So far, wellposedness  for (\ref{1CSSoriginal})  at the critical regularity in any gauge remains an interesting open problem. 

Under equivalence setting, Liu-Smith \cite{liu2016global} demonstrated that the local wellposedness of (\ref{CSS}) with $L^2$ data can be proved via mere Strichartz estimates. Moreover,  a threshold result is obtained in  \cite{liu2016global}.

To explain the result, we first note that for $g \ge 1$, (\ref{CSS}) admits soliton solutions. Consider the elliptic equation
\beq \Delta_m u - \alpha u - \frac{2m}{r^2} A_\theta[u] u - A_0[u]u- \frac{1}{r^2}A_\theta[u]^2 u + g |u|^2 u = 0 \label{station}\eeq
with $\alpha \ge 0$.
When $g = 1$, Byeon-Huh-Seok \cite{byeon2012gaugeNLS,byeon2016standing} showed that (\ref{station}) admits a unique positive\footnote{In fact, when $m > 0$, we only have $Q > 0$ on $\real^2 \backslash \{ 0\}$. Indeed the zero at the origin exists for any $m$-equivariant function $f$ to be continuous. So in the following text, when we say a $m$-equivariant ($m > 0$) function is positive, we will always mean it's positive in $\real^2 \backslash \{ 0\}$.} radial solution with $\alpha = 0$
\beq Q^{(m)}(r) := \sqrt{8} (m+1) \frac{r^m}{1 + r^{2(m+1)}} \label{solitonq} \eeq
 under the boundary condition $A_0 \rightarrow 0$ as $|x| \rightarrow +\infty$. We simplify the notation by writing $Q=Q^{(m)}$ when $m$ is fixed.  This generates the static solution 
\beq \phi^{(m)}(x) := e^{im\theta}Q(r)\label{solitonpsi}\eeq
to self-dual (\ref{CSS})\footnote{In the self-dual case, with the help of (\ref{squareenergy}),  one can show that  zero energy solutions are gauge equivalent to static solutions of (\ref{1CSSoriginal}), even without equivariant assumption. Rigorous proof can be found in~\cite{HuSe13,kim2019css}.}.

In \cite{byeon2012gaugeNLS,byeon2016standing}, the authors also proved the non-existence of solution  for (\ref{station})  when $g \in (0, 1)$ and the existence of positive radial solution for (\ref{station}) with $\alpha \ge 0$ as $g > 1$, which we denote as  $Q^{(m, g,\alpha)}$ \footnote{Since $\alpha$ depends on $m$ and $g$, we also use the notation $Q^{(m, g)} := Q^{(m, g, \alpha(m, g))}$ and similar for $\phi$, $\psi$.}. By writing  $\phi^{(m, g,\alpha)}(x) := Q^{(m, g,\alpha)}(r) e^{im\theta}$, we obtain either a static solution ($\alpha = 0$) or a stationary wave ($\alpha > 0$) to (\ref{CSS}) for $g>1$, which is of the form  $\psi^{(m, g,\alpha)}(t, x):= \phi^{(m, g,\alpha)}(x) e^{i\alpha t}$ for some $\alpha \ge 0$. It is also conjectured in \cite{byeon2012gaugeNLS} that (\ref{CSS}) only admits stationary wave when $g > 1$.


In fact, these soliton solutions are the minimal-charge obstructions to global well-posedness and scattering as explained in the following threshold theorem. 
Let us first define the equivariant Sobolev space as
\[ H^s_m := \{ f \in H^s : \exists u = u(r)\, \text{s.t.} \,f(x) = e^{im\theta}u(r) \}, \quad L^2_m := H^0_m. \]
The homogeneous Sobolev space $\dot{H}^s_m$ is also defined in this way. It is easy to see that $\| f \|_{\dot{H}^1_{m}}^2 = \| \partial_r f \|_{L^2}^2 + \| \frac{m}{r} f \|_{L^2}^2$. 

\begin{thm}[Threshold result \cite{liu2016global}]\label{thresholdresult}
    Let $m \in \posint  := \{ n \in \intg: n \ge 0 \}$.
    \begin{enumerate}[(1)]
      \item Let $g < 1$. Then for any initial data $\phi_0 \in L^2_m$, the solution $\phi$ of (\ref{CSS}) is global-wellposed and scatters both forward and backward in time.
      \item Let $g = 1$. Then for any initial data $\phi_0 \in L^2_m$ with $\mathrm{chg}(\phi_0) < \mathrm{chg}(Q^{(m)}) = 8\pi (m+1) $, the solution $\phi$ of (\ref{CSS}) is global-wellposed and scatters both forward and backward in time.
      \item Let $g > 1$. Then there exists a constant $c_{m, g} > 0$ such that for any initial data $\phi_0 \in L^2_m$ with $\mathrm{chg}(\phi_0) < c_{m, g} $, the solution $\phi$ of (\ref{CSS}) is global-wellposed and scatters both forward and backward in time. Moreover, the minimum charge of a nontrivial standing wave solution $\psi^{(m, g)}$ in the class $L^\infty_t L^2_m$ is equal to $c_{m, g}$. 
    \end{enumerate}
\end{thm}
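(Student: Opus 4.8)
The plan is to run a Kenig--Merle concentration--compactness/rigidity argument, adapted to the $L^2$-critical, $m$-equivariant, gauge-covariant setting. The local theory is the input: by \cite{liu2016global}, (\ref{CSS}) is locally well-posed in $L^2_m$ via Strichartz estimates, small-charge solutions scatter, and there is a long-time perturbation (stability) lemma. Given this, all three parts reduce to a single uniform-bound statement: for every admissible charge level $c$ --- arbitrary in (1), $c<\mathrm{chg}(Q)=8\pi(m+1)$ in (2), $c<c_{m,g}$ in (3) --- every solution $\phi$ with $\mathrm{chg}(\phi)\le c$ satisfies a global spacetime bound $\|\phi\|_{S(\real)}<\infty$, which forces global well-posedness and scattering. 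Arguing by contradiction, one defines the critical charge $c_*:=\sup\{c:\ \text{all }\phi\text{ with }\mathrm{chg}(\phi)\le c\text{ have finite spacetime norm}\}$; if $c_*$ were below the asserted threshold there would exist a non-scattering solution of charge exactly $c_*$.

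The heart of the matter is a variational/coercivity package. First, since $A_r=0$, the energy (\ref{energy-r}) reads $E[\phi]=\tfrac{1}{2}\|\partial_r\phi\|_{L^2}^2+\tfrac{1}{2}\|\tfrac{1}{r}\bm{D}_\theta\phi\|_{L^2}^2-\tfrac{g}{4}\|\phi\|_{L^4}^4$ with the gradient part nonnegative, so combining conservation of charge and energy with a sharp Gagliardo--Nirenberg-type inequality --- $\|\phi\|_{L^4}^4\le C_m\,\mathrm{chg}(\phi)\cdot(\text{covariant Dirichlet energy of }\phi)$, with the optimal constant $C_m$ attained by the ground state $Q^{(m)}$ of (\ref{solitonq}) --- yields coercivity of the covariant Dirichlet energy below the relevant threshold. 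For $g=1$ the Bogomol'nyi complete-square structure of the energy both produces this inequality (identifying $C_m$ in terms of $\mathrm{chg}(Q)$) and shows $E[\phi]\ge 0$ with equality only on the static orbit $\{e^{im\theta}Q\text{ up to scaling and phase}\}$; hence for $\mathrm{chg}(\phi)<\mathrm{chg}(Q)$ one gets $E[\phi]\gtrsim$ covariant Dirichlet energy. For $g<1$ the quartic term has defocusing sign, so coercivity is unconditional. For $g>1$ the role of $Q$ is taken by a minimal-charge standing wave: a Nehari/variational analysis of the elliptic system (\ref{station}) over $\omega\ge 0$ shows that the infimum of $\mathrm{chg}$ over nontrivial solutions is attained, equals some $c_{m,g}>0$, and below this level the relevant functional is coercive --- this is also the ``moreover'' claim in (3).

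Next is the compactness extraction. One establishes a linear profile decomposition for the $m$-equivariant Schr\"odinger group $e^{it\Delta_m}$ in $L^2_m$; equivariance rules out spatial translations, so the profiles carry only $L^2$-scales and phases (\ref{1gaugeinv}). Plugging this into the nonlinear flow and invoking the stability lemma, the usual dichotomy shows that failure of the spacetime bound at charge $c_*$ forces a single bad profile, i.e.\ a non-scattering critical element $\phi_c$ with $\mathrm{chg}(\phi_c)=c_*$ whose orbit $\{\lambda(t)\,e^{i\gamma(t)}\,\phi_c(t,\lambda(t)\,\cdot\,)\}$ is precompact in $L^2_m$. The genuinely new point relative to ordinary NLS is that $A_\theta[\phi]$ and $A_0[\phi]$ are nonlocal and not additive under superposition of profiles; the cross terms are controlled using their explicit integral formulas, the decoupling of scales, and the equivariant structure.

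Finally, rigidity. For a precompact critical element one runs a localized virial identity: testing (\ref{uCSS}) against a truncated dilation multiplier $\chi_R(r)\,r\,\partial_r u$ and using precompactness to absorb the truncation error, a time-averaging argument forces the covariant Dirichlet energy down to a multiple of $E[\phi_c]$, hence $E[\phi_c]\le 0$. For $g<1$ this contradicts coercivity, so no nontrivial critical element exists --- proving (1). For $g=1$ it forces $E[\phi_c]=0$, so by the equality case of Bogomol'nyi $\phi_c$ is static, i.e.\ $\phi_c=e^{im\theta}Q$ up to scaling and phase, with $\mathrm{chg}(\phi_c)=\mathrm{chg}(Q)$, contradicting $c_*<\mathrm{chg}(Q)$ --- proving (2). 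For $g>1$ it forces $\phi_c$ to be a standing wave, so $\mathrm{chg}(\phi_c)=c_*\ge c_{m,g}$, contradicting $c_*<c_{m,g}$ --- proving (3). The main obstacle I anticipate is the coercivity package itself: establishing the sharp Gagliardo--Nirenberg inequality with $Q^{(m)}$ as the optimizer (and, for $g>1$, the minimality of $c_{m,g}$ among standing waves), and then transporting the coercivity through the profile decomposition despite the nonlocal, non-additive magnetic potentials --- together with making the localized virial identity rigorous at merely $L^2$ regularity in two dimensions with the long-range potential $A_0$ present.
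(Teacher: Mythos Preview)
The paper does not prove Theorem \ref{thresholdresult}; it is quoted verbatim from \cite{liu2016global} as background, and the present work builds on it rather than re-deriving it. So there is no ``paper's own proof'' to compare against. That said, your outline is the Kenig--Merle concentration-compactness/rigidity scheme, which is indeed the framework of \cite{liu2016global} --- this is visible in the almost-periodicity machinery (Theorem \ref{almostpms}, Lemma \ref{duhamel}) that the present paper imports wholesale from that reference.

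One point of caution: you organize the $g=1$ coercivity around a ``sharp Gagliardo--Nirenberg-type inequality with $Q^{(m)}$ as optimizer.'' The paper explicitly flags (in the paragraph comparing (\ref{CSS}) with (\ref{NLS}) after Theorem \ref{rigidfinnonselfdual}) that for (\ref{CSS}) ``we don't have such characterization for $Q$.'' The Bogomol'nyi identity gives $E[\phi]=\tfrac{1}{2}\|\bm{D}_+\phi\|_{L^2}^2\ge 0$ directly --- an inequality of the form $\|\phi\|_{L^4}^4\le 2\|\bm{D}_x\phi\|_{L^2}^2$ with \emph{no} charge factor --- and the threshold $\mathrm{chg}(Q)$ enters only through the rigidity of the equality case (Proposition \ref{varcharselfdual}), not through an interpolation constant. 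This is not a gap in your logic, since your rigidity step ultimately rests on $E[\phi_c]=0\Rightarrow\phi_c\in\{Q\text{-orbit}\}$ anyway, but the G--N framing is misleading for this model and should be dropped. For $g>1$ the variational input you need is exactly Lemma \ref{lema1}--\ref{lema3} in Appendix \ref{AppA}, which the paper does record.
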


\begin{rmk} By scattering forward/backward in time,  we mean there exists  $\phi_{\pm}\in L^2$, such that \[\lim_{t \to \pm\infty} \| \phi(t) - e^{i t \Delta} \phi_{\pm} \|_{L^2} = 0.\]
In the proof of Theorem~\ref{thresholdresult}, the $L^4_{t,x}$ norm plays the role of a scattering norm in the following sense.  Let $\phi:I\times \mathbb{R}^2\rightarrow \mathbb{C}$ be a solution to (\ref{CSS}), where $I$ is the maximal lifespan, if $\sup I=+\infty$ and $\|\phi\|_{\|\phi\|_{L^4_{t,x}([0,+\infty)\times \mathbb{R}^2)}}<\infty$, then solution scatters forward in time. 

For this reason, we say a solution $\phi$, with maximal lifespan $I$,  blows up forward or backward in time if $\|\phi\|_{L^4_{t,x}(I_{\pm}\times \mathbb{R}^2)}=\infty$, with $I_+=[0, \sup I)$ and $I_{-}=(\inf I, 0]$.  In particular, it contains two scenarios: to blow up at infinite time or at finite time.
\end{rmk}

Note that (\ref{CSS}) admits pseudoconformal symmetry. The pseudoconformal transformation\footnote{The pseudoconformal invariance actually holds for the general   (\ref{1CSSoriginal}) \cite{huh2009blow}.} 
\beq PC_T: \quad \psi(t, x) \mapsto \frac{1}{T-t} e^{-i\frac{|x|^2}{4(T-t)}} \psi\left( \frac{t}{T(T-t)}, \frac{x}{T-t} \right), \qquad \forall \, t < T. \label{pct1}\eeq
keeps the equation (\ref{CSS}) invariant and conserves the solution's charge. By applying it to the solitons $\psi^{(m,g)}$ ($g \ge 1$) as above, we get $PC_T[\psi^{(m, g)}]$, another $m$-equivariant solution for (\ref{CSS}) with the threshold charge. It blows up at a finite time $t = T$, while in contrast, the soliton $\psi^{(m,g)}$ blows up at infinite time.

It is a natural question to study solutions  above or at the threshold charge. Recently,  Kim-Kwon~\cite{kim2019css,kim2020construction} studied finite time blow up solutions for the self-dual (\ref{1CSSoriginal}) under equivariant setting ($m\geq 1$). They constructed  pseudoconformal  blow-up solution with given asymptotic profile and studied its instability mechanism.  Furthermore, they constructed a co-dimension 1 manifold yielding pseudoconformal blow-up solutions.  Kim-Kwon-Oh~\cite{kim2020blow} considered the radial case and constructed data set that leads to blow up solutions whose blow-up rate differs from the pseudoconformal rate by a power of logarithm.  

On the other hand, our work focus on the special role $PC_T[\psi^{(m, g)}]$ plays. We present a characterization for $H^1_m$ solutions with exact threshold charge.

\subsection{Main result}

Our main result is that, in self-dual case $g=1$, any blowup $H^1_m$ solution must be  (\ref{solitonpsi}) up to symmetries. 

  \begin{thm}[Characterization of threshold solution in self-dual case]\label{char} For $m \ge 1$, $g = 1$ and initial data $\phi_0 \in H^1_m (\real^2)$, $\|\phi_0\|_{L^2} = \|Q^{(m)}\|_{L^2}$, one of the following three scenarios happens:
  \begin{enumerate}[(1)]
    \item $u$ equals to pseudoconformal transformation of the ground state $Q^{(m)}$ up to phase rotation and scaling. 
    \item $u$ equals to the ground state $Q^{(m)}$ up to phase rotation and scaling.
    \item $u$ scatters both forward and backward in time.
  \end{enumerate}
  And for $m = 0$, $g = 1$ and initial data $\phi_0 \in H^1_{rad} (\real^2)$, $\|\phi_0\|_{L^2} = \|Q^{0}\|_{L^2}$, only cases (2), (3) with $m =0$ can happen. In particular, the solution must exist globally.
  \end{thm}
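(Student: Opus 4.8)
The natural strategy is the concentration-compactness / rigidity approach à la Kenig--Merle, adapted to the $L^2$-critical setting and combined with the pseudoconformal symmetry. Since $\operatorname{chg}(\phi_0)=\operatorname{chg}(Q)$ is exactly the threshold, Theorem~\ref{thresholdresult} tells us that if $\phi$ does not scatter in at least one time direction, say forward, then $\|\phi\|_{L^4_{t,x}(I_+\times\real^2)}=\infty$. The first step is to run the standard minimal-blowup-solution machinery: from the failure of scattering at the threshold, extract (via a profile decomposition for the $m$-equivariant linear Schr\"odinger flow, together with the perturbation theory and nonlinear profiles from \cite{liu2016global}) an almost-periodic-modulo-symmetries solution $\phi$ with charge exactly $\operatorname{chg}(Q)$; that is, there exist paths $\lambda(t)>0$ and $\gamma(t)\in\real$ so that $\{e^{i\gamma(t)}\lambda(t)^{-1}\phi(t,\cdot/\lambda(t))\}$ is precompact in $L^2_m$ (no spatial translation parameter is needed thanks to equivariance). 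Because we are \emph{at} the threshold rather than strictly below, this critical element is not excluded; instead it must be rigidly identified.

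**Using the self-dual structure.**
The key additional input in the self-dual case $g=1$ is the Bogomol'nyi / complete-square structure of the energy (\ref{squareenergy}), which gives $E[\phi]\ge 0$ with equality characterizing $Q$ (up to symmetries), together with the variational characterization of $Q$ as the charge minimizer among zero-energy states. I would split the analysis by the sign/size of $E[\phi_0]$ and the behavior of the scale $\lambda(t)$. Exploiting the virial / pseudoconformal identity for (\ref{CSS}) — differentiating $\int |x|^2|\phi|^2\,dx$ twice in time yields (a multiple of) $E[\phi]$ plus lower-order equivariant terms — one shows that a nonscattering threshold solution with $E[\phi_0]<0$ cannot exist (it would force finite-time blowup of a subthreshold-charge quantity, contradicting Theorem~\ref{thresholdresult}), so $E[\phi_0]\ge 0$, and in fact the almost-periodicity plus energy nonnegativity pins $E[\phi_0]$ down. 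Applying the pseudoconformal transformation $PC_T$ converts a forward-in-time infinite-time blowup into a finite-time blowup and vice versa, so after possibly replacing $\phi$ by $PC_T[\phi]$ one reduces to the case of a solution that is global and does not scatter, with a compactness scale $\lambda(t)$ that stays bounded away from $0$ and $\infty$ (the self-similar and other borderline scenarios being ruled out by the $L^2$-criticality arguments of \cite{liu2016global} combined with the equivariant monotonicity/virial estimates).

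**The rigidity step.**
At that point the solution is a genuine solitary-wave-type object: almost periodic, global, bounded scale. The endgame is to show it must be a standing wave, i.e.\ $\phi(t,x)=e^{i\omega t}\phi^{(m,1)}(x)$ up to symmetries, and then to identify $\phi^{(m,1)}=\phi^{(m)}=e^{im\theta}Q$ using the uniqueness result of Byeon--Huh--Seok for (\ref{station}) with $\omega=0$ — recalling that in the self-dual case the positive solution of (\ref{station}) forces $\omega=0$, so that the standing wave is exactly $Q$ and the "infinite-time blowup" branch is the soliton (case (2)), while the "finite-time blowup" branch is its pseudoconformal transform (case (1)). To upgrade "almost periodic" to "standing wave" I would use a modulation analysis around $Q$: write $\phi(t)=e^{i\gamma(t)}\lambda(t)^{-1}[Q+\epsilon(t)](\cdot/\lambda(t))$ with $\epsilon$ small and satisfying orthogonality conditions, derive modulation equations for $\dot\lambda,\dot\gamma$, and use the coercivity of the linearized self-dual energy (the Hessian of the Bogomol'nyi functional is a nonnegative operator whose kernel is exactly the symmetry directions — this coercivity is where self-duality is essential) together with a virial-type monotonicity functional to force $\epsilon\equiv 0$ and $\lambda$ constant.

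**Main obstacle.**
The hardest part will be the rigidity step: establishing the coercivity of the linearized operator on the orthogonal complement of the symmetry directions with constants uniform enough to close a virial–modulation bootstrap, and correctly handling the slowly-decaying, nonlocal, $r$-weighted terms $\tfrac{2m}{r}A_\theta$, $A_0$, $\tfrac{1}{r^2}A_\theta^2$ in all the virial/localized-virial computations (these are not standard power nonlinearities, and the $1/r$ and $1/r^2$ weights interact delicately with spatial truncation, especially near the origin where $m$-equivariant functions vanish). A secondary difficulty is ruling out the borderline compactness scenarios ($\lambda(t)\to\infty$ or $\lambda(t)\to 0$ along the global time direction) directly at the threshold charge rather than strictly below it, where the usual "no-waste" arguments lose their slack; here one must feed in the sharp charge identity $\operatorname{chg}(\phi)=\operatorname{chg}(Q)$ and the nonnegativity of the self-dual energy in a quantitatively tight way.
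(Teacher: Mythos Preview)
Your overall strategy is sound in spirit, but there is one genuine gap and the route you sketch for the rigidity step differs substantially from the paper's.

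\textbf{The gap.} You propose to reduce finite-time blowup to infinite-time blowup (or vice versa) by applying the pseudoconformal transform $PC_T$. This only works if $|x|\phi_0\in L^2$, i.e.\ if $\phi_0\in\Sigma$; for general $H^1_m$ data the transform $PC_T[\phi]$ need not lie in $H^1_m$ (or even in $L^2$ at a given time). The paper makes this point explicitly in the remark following Theorem~\ref{rigidinfselfdual}. Consequently the two blowup scenarios must be handled separately, and the paper indeed treats them by completely different arguments: the finite-time case by the Merle / Hmidi--Keraani scheme (sequential compactness $\Rightarrow$ charge concentration at the blowup time $\Rightarrow$ truncated virial ODE via Banica's Cauchy--Schwarz trick $\Rightarrow$ $E[e^{i|x|^2/4T}\phi_0]=0$), and the infinite-time case by a much longer argument described below. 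Your plan as written does not cover the finite-time case for $H^1_m\setminus\Sigma$ data.

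\textbf{Comparison of rigidity arguments.} For the global non-scattering solution you propose a Duyckaerts--Merle style endgame: modulate around $Q$, use coercivity of the linearized self-dual operator on the orthogonal complement of the symmetry directions, and drive $\epsilon\to 0$ by a virial--modulation bootstrap. The paper proceeds differently. It does use modulation and the coercivity $\mathcal L_Q=L_Q^*L_Q$ (Lemma~\ref{coercivity}, Lemma~\ref{rigidquan}), but only to obtain a \emph{weak} localization of kinetic energy, namely a uniform bound $\|\varphi_{>c}\nabla\phi(t)\|_{L^2}\lesssim_\phi 1$ (Proposition~\ref{weaklocal}). The real work is then harmonic-analytic: combining this bound with the in/out decomposition, kernel estimates for $P_N^\pm e^{\mp it\Delta}$, weighted Strichartz, and the non-scattering Duhamel formula, the paper proves a frequency decay estimate (Proposition~\ref{freqdecayest}) and hence \emph{strong} localization of kinetic energy uniformly in $t$ (Theorem~\ref{stronglocal}). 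With that in hand, the truncated virial identity yields a contradiction whenever $E[\phi_0]>0$, in both the $\inf_t N(t)>0$ and the $\liminf N(t)=0$ regimes (the latter handled via the a priori bound $N(t)\gtrsim\langle t\rangle^{-1/2}$ rather than by ``ruling out scenarios''). Thus the paper never shows $\epsilon\equiv 0$ directly; it forces $E[\phi_0]=0$ and then invokes the variational characterization $E=0\Rightarrow\phi_0=$ soliton. Your modulation-to-zero route is plausible but would require controlling the nonlocal terms in a localized virial with quantitative decay, which is precisely the obstacle the paper circumvents with the in/out machinery; also note that $E<0$ is impossible here simply because $E=\tfrac12\|\bm D_+\phi\|_{L^2}^2\ge 0$, so no argument is needed for that case.
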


  Noticing that a non-scattering solution blows up either at infinite time or at finite time, this result comes down to the following two theorems. 
 
  \begin{thm}[Rigidity of blowup in finite time in self-dual case]\label{rigidfinselfdual}
    For $m \in \posint$,  $g = 1$, and initial data $\phi_0 \in H^1_m (\real^2)$, $\|\phi_0\|_{L^2} = \|Q^{(m)}\|_{L^2}$, if the solution of (\ref{CSS}) $\phi$ blows up at $T > 0$, i.e. $\|\phi\|_{L^4_{t,x}([0, T) \times \real^2)} = \infty$,
    then $\exists \gamma \in [0, 2\pi), \lambda \in \real_+$ s.t.
    \[ \phi(t, x) = e^{i\gamma} PC_T [\lambda \phi^{(m)}(\lambda \cdot)](t, x), \qquad \forall \, t < T. \]
  \end{thm}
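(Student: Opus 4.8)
The strategy is the classical rigidity argument à la Merle/Raphaël: start from a finite-time blowup solution at the threshold charge, show it must coincide with the minimizer of the relevant variational problem, and then integrate the resulting constraints using the pseudoconformal symmetry. First I would apply the pseudoconformal transformation $PC_T$ to $\phi$ to produce a new solution $\tilde\phi(t,x) = PC_T[\phi]$ which, by (\ref{pct1}), is again an $m$-equivariant $H^1_m$ solution of (\ref{CSS}) with the same charge $\|Q\|_{L^2}$, and which is now \emph{global} forward in time (the finite blowup time $T$ is sent to $t=+\infty$). The key point is that the hypothesis $\|\phi\|_{L^4_{t,x}([0,T)\times\real^2)}=\infty$ translates, under $PC_T$, into $\|\tilde\phi\|_{L^4_{t,x}([0,+\infty)\times\real^2)}=\infty$, i.e. $\tilde\phi$ does not scatter forward in time. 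So it suffices to show: any forward-global, forward-non-scattering, $H^1_m$ threshold solution is, up to phase and scaling, the static solution $\phi^{(m)}=e^{im\theta}Q$; applying $PC_T^{-1}$ then gives the stated conclusion.

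The heart of the matter is a variational/concentration-compactness characterization of $Q$. In the self-dual case $g=1$, one uses the Bogomol'nyi factorization (\ref{squareenergy}) of the energy, which expresses $E[\phi]$ as the $L^2$-norm squared of a first-order operator annihilating exactly the ground state; in particular $E[\phi]\ge 0$ with equality iff $\phi$ is (a symmetry image of) $\phi^{(m)}$, and the sharp Gagliardo–Nirenberg-type inequality underlying Theorem~\ref{thresholdresult}(2) has $Q$ as its unique optimizer among $H^1_m$ functions of charge $\|Q\|_{L^2}$. Next I would invoke the profile decomposition / minimal blowup (critical element) machinery from \cite{liu2016global}: since $\tilde\phi$ is a non-scattering solution at exactly the threshold charge, it must be a minimal-charge blowup solution, hence almost periodic modulo the symmetry group — there exist continuous functions $\lambda(t)>0$, $\gamma(t)$ such that $\{e^{i\gamma(t)}\lambda(t)^{-1}\tilde\phi(t,\lambda(t)^{-1}\cdot)\}$ is precompact in $H^1_m$ (the equivariance kills the translation parameter, which is crucial — no spatial translation is available). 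Combining this compactness with the conservation of energy and the variational rigidity: the charge is pinned at the minimizer level, so the compactness core must be the variational minimizer $\phi^{(m)}$ itself, and $E[\tilde\phi]=E[\phi^{(m)}]=0$.

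Finally, I would upgrade "the solution is at every time a symmetry image of $Q$" to "the solution is globally a fixed pseudoconformal image of $Q$." This is where the modulation analysis enters: writing $\tilde\phi(t) = e^{i\gamma(t)}\lambda(t)^{-1}[Q + \varepsilon(t)](\lambda(t)^{-1}\cdot)$ with $\varepsilon(t)\to 0$ and suitable orthogonality conditions, one plugs into (\ref{CSS}) and uses the zero-energy (self-dual) structure together with virial-type identities — in particular the evolution of the localized variance $\int |x|^2|\tilde\phi|^2\,dx$, which for zero-energy solutions of an $L^2$-critical equation is a quadratic polynomial in $t$ — to derive rigid ODEs for $\lambda(t)$ and $\gamma(t)$. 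The self-dual complete-square structure forces $\varepsilon\equiv 0$ and pins down $\dot\lambda$, $\dot\gamma$ to be exactly those of a pseudoconformal-rescaled soliton, so $\tilde\phi$ is exactly $e^{i\gamma_0}PC_{T'}[\lambda_0\phi^{(m)}(\lambda_0\cdot)]$ for some parameters; pulling back by $PC_T^{-1}$ yields the theorem. The main obstacle I anticipate is this last rigidity step: one must carefully exploit the self-dual first-order structure (rather than only the $L^2$-critical virial law, which alone would leave a one-parameter family of possible rates, cf. the log-corrected blowups of \cite{kim2020blow}) to eliminate the error term and to show that the modulation parameters follow the exact pseudoconformal law — controlling $\varepsilon$ in $H^1_m$ near the (degenerate, symmetry-induced) kernel of the linearized operator, and handling the slowly-decaying nonlocal terms $A_\theta, A_0$, will require the bulk of the technical work.
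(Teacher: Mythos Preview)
Your proposal has a genuine gap at the very first step. Applying $PC_T$ to $\phi$ does \emph{not} produce an $H^1_m$ solution unless you already know $|x|\phi_0\in L^2$: differentiating the factor $e^{-i|x|^2/4(T-t)}$ in (\ref{pct1}) brings down a factor of $|x|$, so $PC_T[\phi](0)\in H^1$ requires $\phi_0\in\Sigma:=\{f\in H^1_m:|x|f\in L^2\}$. The paper explicitly notes (in the remark following Theorem~\ref{rigidinfselfdual}) that Theorems~\ref{rigidfinselfdual} and~\ref{rigidinfselfdual} are equivalent \emph{only} after restricting to $\Sigma$. Since the hypothesis is merely $\phi_0\in H^1_m$, your reduction to the infinite-time case is not available a priori.

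The paper's proof proceeds directly and is considerably more elementary than what you outline. It first establishes a sequential rigidity (Proposition~\ref{rigidseqselfdual}): any normalized sequence with threshold charge, fixed $\dot H^1_m$ norm and energy tending to zero converges in $H^1$ to $\phi^{(m)}$ up to phase. Applied along a sequence $t_n\nearrow T$ (with rescaling $\rho_n\to 0$), this yields charge concentration $|\phi(t_n)|^2\to\|Q\|_{L^2}^2\delta_0$. The decisive step is then the truncated virial identity together with Banica's Cauchy--Schwarz estimate (Lemma~\ref{cauchyschwartz}), which gives the ODE bound $|\partial_t V_R(t)|\lesssim_{\phi_0} V_R(t)^{1/2}$. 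Integrating from $t$ to $t_n$ and using $V_R(t_n)\to 0$ yields $V_R(t)\lesssim (T-t)^2$ \emph{uniformly in $R$}; letting $R\to\infty$ gives $\int|x|^2|\phi(t)|^2\lesssim (T-t)^2$, which in particular proves $\phi_0\in\Sigma$ a posteriori. Finally the identity (\ref{coorp}) forces $E[e^{i|x|^2/4T}\phi_0]=0$, and the variational characterization (Proposition~\ref{varcharselfdual}) finishes. No almost-periodicity, modulation analysis, or linearized spectral theory is needed for the finite-time case; those tools are reserved for Theorem~\ref{rigidinfselfdual}, which is substantially harder.
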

  
  \begin{rmk}\label{H1blowup} It's easy to see from (\ref{solitonq}) and (\ref{pct1}) that $Q^{(m)} \in H^1_m $for all $m \ge 0$ and $PC_T[Q^{(m)}] \in H^1_m$ only for $m \ge 1$. So for $m=0$, Theorem \ref{rigidfinselfdual} indicates that any threshold solution generated by $\phi_0 \in H^1_{rad}(\real^2), \,\| \phi_0\|_{L^2} = \| Q^{(0)} \|_{L^2} $ cannot blow up in finite time, as is stated in Theorem \ref{char}.
\end{rmk}
  
  \begin{thm}[Rigidity of blowup in infinite time in self-dual case] \label{rigidinfselfdual}
  For $m \in \posint$, $g =1$ and initial data $\phi_0 \in H^1_m (\real^2)$, $\|\phi_0\|_{L^2} = \|Q^{(m)}\|_{L^2}$, if the solution of (\ref{CSS}) $\phi$ blows up in infinite time, say at $+\infty$, i.e. $\|\phi\|_{L^4_{t,x}([0, +\infty) \times \real^2)} = \infty$,
   then $\exists \gamma \in [0, 2\pi), \lambda \in \real_+$ s.t.
    \[ \phi(t, r) = e^{i\gamma} \lambda \phi^{(m)}(\lambda r). \] 
  \end{thm}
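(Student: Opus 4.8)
The plan is to run a concentration-compactness / rigidity argument in the spirit of the Kenig-Merle and Duyckaerts-Merle programs, adapted to the equivariant self-dual structure and to the fact that the threshold charge equals $\mathrm{chg}(Q)$. First I would record the variational characterization of $Q$: in the self-dual case $g=1$ the energy has the complete-square (Bogomol'nyi) form, so $E[\phi]\ge 0$ with equality exactly on the self-dual solitons, and $Q$ is the unique (up to scaling and phase) minimizer of the energy among $H^1_m$ functions of charge $\mathrm{chg}(Q)$; moreover there is a coercivity/compactness statement saying that any sequence $\phi_n$ with $\mathrm{chg}(\phi_n)\to\mathrm{chg}(Q)$ and $E[\phi_n]\to 0$ converges, after modulation by the symmetry group (scaling $\lambda_n$ and phase $\gamma_n$), to $Q$ in $\dot H^1_m$. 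This is the self-dual analogue of the profile-decomposition rigidity at the mass threshold; I expect it to follow from the square structure of the energy together with a linear-algebraic lower bound on the self-dual operator (the Bogomol'nyi factorization linearized around $Q$), exactly as in the companion estimates used to prove Theorem~\ref{thresholdresult}.

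Next I would show that a solution $\phi$ that blows up forward in infinite time is \emph{almost periodic modulo symmetries}: there exist continuous functions $\lambda(t)>0$, $\gamma(t)$ such that $\{e^{-i\gamma(t)}\lambda(t)^{-1}\phi(t,\lambda(t)^{-1}\cdot)\}_{t\ge 0}$ is precompact in $H^1_m$. Because the charge is exactly at the threshold, the linear evolution carries away no mass in the limit (otherwise the remaining nonlinear part would be a subthreshold solution that scatters, contradicting blowup), so the profile decomposition collapses to a single bubble; this single-bubble reduction is where the threshold hypothesis $\|\phi_0\|_{L^2}=\|Q\|_{L^2}$ is essential. Combining precompactness with conservation of energy and the energy-minimization property of $Q$ forces $E[\phi]=0$, hence $\phi(t)$ stays on the manifold of self-dual solitons for every $t$: $\phi(t,x)=e^{i\gamma(t)}\lambda(t)\phi^{(m)}(\lambda(t)x)$.

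It then remains to rule out genuine time-dependence of the modulation parameters. Plugging the soliton ansatz with time-dependent $\lambda(t),\gamma(t)$ into (\ref{CSS}) and projecting onto the tangent directions $\partial_\lambda Q$, $iQ$ (using the explicit $Q^{(m)}$ in (\ref{solitonq}) and orthogonality against the kernel of the linearized self-dual operator) yields modulation ODEs for $\lambda,\gamma$; a monotonicity/virial-type functional — e.g. the truncated variance $\int |x|^2|\phi|^2$, whose second time derivative equals $8E[\phi]=0$ in the self-dual case — together with the boundedness forced by precompactness shows $\lambda(t)$ cannot degenerate to $0$ or $\infty$ and in fact is constant, and then $\gamma(t)$ is linear; but a nonzero phase speed would make $\|\phi\|_{L^4_{t,x}}$ finite (the object would just be the soliton, which has infinite $L^4_{t,x}$ — here I must check the soliton indeed blows up in the $L^4_{t,x}$ sense, which it does since $\|e^{i\alpha t}Q\|_{L^4_{t,x}([0,\infty))}=\infty$), so the solution is exactly a fixed soliton up to the allowed phase and scaling. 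The main obstacle I anticipate is the single-bubble reduction at the sharp threshold: one needs a clean profile decomposition adapted to $H^1_m$ with the covariant nonlinearity (the nonlocal terms $A_\theta,A_0$ in (\ref{nl})) and a perturbation theory good enough to propagate smallness, so that the "no mass escapes to the linear flow" dichotomy is rigorous; controlling the nonlocal gauge terms under weak limits and rescalings is the delicate technical point, and it is presumably handled by the same Strichartz-based local theory from \cite{liu2016global} used for Theorem~\ref{thresholdresult}.
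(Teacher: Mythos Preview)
Your proposal has a genuine gap at the central step. You write: ``Combining precompactness with conservation of energy and the energy-minimization property of $Q$ forces $E[\phi]=0$.'' This is exactly where the main difficulty lies, and it does not follow from the ingredients you list. There is no abstract principle by which an almost-periodic orbit at the charge threshold must sit at the energy minimum; one has to rule out $E[\phi]>0$ by a separate argument. The paper does this by a truncated virial argument: if $E[\phi_0]>0$, then $\partial_t^2\int\chi_R|\phi|^2\ge 8E[\phi_0]>0$ up to error terms supported in $\{|x|\ge R\}$, and integrating in time contradicts the trivial bound $|V_R|\lesssim R^2$. For this to work one needs the error terms to be small \emph{uniformly in $t$}, i.e.\ one needs $\|\nabla\phi(t)\|_{L^2(|x|\ge R)}$ small for $R$ large, uniformly in $t$. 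The concentration-compactness machinery from \cite{liu2016global} only yields $L^2$ precompactness (almost periodicity modulo scaling in $L^2_x$), not $H^1_m$ precompactness as you assert; upgrading this to uniform localization of kinetic energy is the main technical content of the proof and occupies most of \S\ref{sec4}. It requires a frequency-decay estimate (Proposition~\ref{freqdecayest}) proved via the in-out decomposition, weighted Strichartz estimates, and the non-scattering Duhamel formula, which in turn rests on a preliminary \emph{weak} $\dot H^1$ localization (Proposition~\ref{weaklocal}) obtained by modulation analysis and the coercivity of the linearized operator $\mathcal L_Q=L_Q^*L_Q$.

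A secondary point: once $E[\phi_0]=0$ is established, Proposition~\ref{varcharselfdual} applied at $t=0$ immediately gives $\phi_0=e^{i\gamma}\lambda\phi^{(m)}(\lambda\cdot)$, and uniqueness of solutions finishes the proof. Your subsequent modulation/ODE analysis of time-dependent $\lambda(t),\gamma(t)$ is therefore unnecessary; the soliton manifold is already characterized pointwise by zero energy.
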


\begin{rmk} For $m \ge 1$, after restricting initial data to a smaller space $\Sigma := \{f \in H^1_m: |x|f \in L^2\}$, these two results are equivalent through pseudoconformal transform (\ref{pct1}), since pseudoconformal transform maps $\Sigma$ into itself. \end{rmk}

Since the threshold behavior appears as well for the non-self-dual case $g > 1$, we may expect similar rigidity for threshold solution. Here we present the result for finite-time blowup.

\begin{thm}[Rigidity of blowup in finite time for $g > 1$]\label{rigidfinnonselfdual}
    For $m \in \posint$ and $g>1$, if $\phi_0 \in H^1_m (\real^2)$, $\|u_0\|_{L^2} = c_{m, g}$, and the solution of (\ref{CSS}) $\phi$ blows up at $T > 0$, then $\exists \gamma \in [0, 2\pi), \lambda \in \real_+$ and a m-equivariant standing wave solution $\psi^{(m, g)}(t, x) = e^{i\alpha t}\phi^{(m,g)}(x)$ $(\alpha \ge 0)$ solving (\ref{CSS}), s.t.
    \[ \phi(t, x) = e^{i\gamma} PC_T [\lambda \psi^{(m, g)}(\lambda^2 \cdot, \lambda \cdot)](t, x), \qquad \forall \, t < T. \]
\end{thm}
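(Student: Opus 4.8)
The plan is to mimic the self-dual argument (Theorem \ref{rigidfinselfdual}) but compensate for the loss of the complete-square structure of the energy. First, I would apply the pseudoconformal transform to the blowup solution $\phi$. Since $\phi$ blows up at finite time $T>0$ with threshold charge $c_{m,g}$, the transformed solution $\psi := PC_T^{-1}[\phi]$ (equivalently, running the analysis after $PC_T$) is a global-in-one-time-direction solution with the same charge. The crucial first step is to upgrade regularity: even though $\phi_0 \in H^1_m$, I expect one needs $\phi \in \Sigma = \{f \in H^1_m : |x| f \in L^2\}$ for the pseudoconformal transform to behave well, so I would first establish that a finite-time blowup threshold solution automatically lies in $\Sigma$ — this follows from the virial/localized variance identity together with the $H^1_m$ assumption, as in the mass-critical NLS theory. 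Then $PC_T$ maps $\phi$ to a solution that exists globally forward (or backward) in time and whose $L^4_{t,x}$ norm is still infinite on that half-line.

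Next, the heart of the matter is a \emph{compactness + rigidity} scheme à la Kenig–Merle. By the threshold hypothesis $\mathrm{chg}(\phi_0) = c_{m,g}$ and Theorem \ref{thresholdresult}(3), the solution is "minimal" among non-scattering solutions. A profile decomposition adapted to the $m$-equivariant mass-critical setting (the linear Schrödinger flow $e^{it\Delta}$ restricted to $H^0_m$, using the Strichartz estimates from \cite{liu2016global}) shows that the orbit $\{\lambda(t)\phi(t,\lambda(t)\cdot)\}$ is precompact in $L^2_m$ modulo the scaling symmetry, for a suitable scaling parameter $\lambda(t)$; here one uses that any non-trivial profile must itself carry charge $\geq c_{m,g}$, so only one profile survives. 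This is where I would invoke the variational characterization of $\phi^{(m,g)}$: the minimizer of the relevant Weinstein-type functional at charge $c_{m,g}$ is, up to symmetries, the standing-wave profile $\phi^{(m,g)}$ (this is essentially the content of Byeon–Huh–Seok's work together with the sharp Gagliardo–Nirenberg-type inequality implicit in \cite{liu2016global}). Concentration-compactness then forces the compact solution to be exactly a standing wave $e^{i\alpha t}\phi^{(m,g)}(x)$ up to phase and scaling.

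Finally, I would run the rigidity step: a localized virial identity shows that a compact-orbit threshold solution with finite-time blowup structure must, after undoing $PC_T$, coincide with $PC_T[\lambda \psi^{(m,g)}(\lambda^2\cdot,\lambda\cdot)]$ up to a phase $e^{i\gamma}$. Concretely, the variance $\int |x|^2 |\psi(t)|^2\,dx$ of the pseudoconformally-transformed solution is a quadratic polynomial in $t$ (by the virial identities valid for general \eqref{1CSSoriginal}, see \cite{huh2009blow}) whose vanishing at $t=T$ together with the threshold energy identity pins down the profile; tracking the modulation parameters and using uniqueness of the standing wave at this charge completes the identification.

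The main obstacle I anticipate is the absence of the Bogomol'nyi factorization when $g>1$: in the self-dual case one gets $E[\phi] = \frac{1}{2}\|(\bm{D}_1 + i\bm{D}_2)\phi\|_{L^2}^2 \geq 0$ for free, which gives the sharp rigidity almost immediately, whereas for $g>1$ the energy has no sign and the standing wave has $\omega \neq 0$ (hence $\alpha \neq 0$), so the virial polynomial and the variational problem carry an extra Lagrange-multiplier parameter $\alpha$ that must be controlled. Reconstructing a sharp Gagliardo–Nirenberg inequality whose extremizers are exactly $\{\phi^{(m,g)}\}$ at charge $c_{m,g}$, and showing that the constant $c_{m,g}$ from Theorem \ref{thresholdresult}(3) genuinely equals the variational threshold, is the delicate point; I would lean on \cite{byeon2012gaugeNLS,byeon2016standing} for existence/uniqueness of $Q^{(m,g)}$ and on \cite{liu2016global} for the concentration-compactness machinery, and expect the argument to close only for those $g>1$ where the standing-wave profile is known to be unique up to symmetries.
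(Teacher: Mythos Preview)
Your proposal takes a substantially different and more circuitous route than the paper, and the detour is driven by a misidentified obstacle.

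The paper does \emph{not} pass through the pseudoconformal transform, profile decomposition, or a Kenig--Merle compactness-plus-rigidity scheme. It runs the Hmidi--Keraani argument of \S 3.2 verbatim: (i) a sequential rigidity (Proposition \ref{rigidseqnonselfdual}) shows that any $H^1_m$-normalized sequence at charge $c_{m,g}$ with energy tending to zero converges in $H^1$ to some standing-wave profile; (ii) applied along $t_n\nearrow T$, this gives charge concentration at the origin; (iii) the Banica/Cauchy--Schwarz inequality $|\partial_t V_R|\lesssim E[\phi_0]^{1/2}V_R^{1/2}$ then yields $\int |x|^2|\phi(t)|^2\le C(T-t)^2$; (iv) hence $E[e^{i|x|^2/(4T)}\phi_0]=0$ by (\ref{coorp}), and Proposition \ref{varcharnonselfdual} finishes.

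Your anticipated ``main obstacle'' --- that for $g>1$ the energy has no sign --- is precisely what is \emph{false} at the threshold: Proposition \ref{varcharnonselfdual} asserts $E[\phi]\ge 0$ for every nonzero $\phi\in H^1_m$ with $\|\phi\|_{L^2}\le c_{m,g}$. This single fact is the only replacement needed for the Bogomol'nyi square; it makes the Cauchy--Schwarz lemma (the $g>1$ analogue of Lemma \ref{cauchyschwartz}) go through unchanged, since that lemma only uses $E[e^{i\alpha\chi_R}f]\ge 0$. No sharp Gagliardo--Nirenberg reconstruction, no tracking of the Lagrange multiplier $\alpha$, and no uniqueness of $\phi^{(m,g)}$ is required --- the theorem's conclusion only asserts the existence of \emph{some} standing wave, which is exactly what Proposition \ref{varcharnonselfdual} delivers when $E=0$.

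Two further issues with your plan: first, establishing $\phi_0\in\Sigma$ ``from the virial/localized variance identity'' is not a preliminary step but the entire content of the argument --- once you have $\int|x|^2|\phi|^2\le C(T-t)^2$ you are already done via (\ref{coorp}). Second, applying $PC_T$ and then running concentration-compactness on the resulting global solution reduces the finite-time problem to the \emph{infinite}-time threshold problem for $g>1$, which the paper explicitly notes (Remark after Theorem \ref{rigidfinnonselfdual}) it cannot handle, for lack of spectral information about the linearization at $\phi^{(m,g)}$. So your route trades an elementary argument for a genuinely open one.
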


\begin{rmk} Compared with Theorem \ref{rigidfinselfdual}, we don't know whether all the standing wave solutions are the same (up to symmetry). Also due to the lack of knowledge on uniqueness of soliton and spectral analysis of its perturbation, our current approach for Theorem \ref{rigidinfselfdual} cannot apply for $g > 1$ case. \end{rmk}

\begin{rmk} In non-self-dual case $g > 1$, we only know $PC_T [\psi^{(m, g,\alpha)}] \in H^1_{m} $ when $\alpha > 0$ from the exponential decay in Proposition \ref{varcharnonselfdual}. For $\alpha = 0$ case, we may not have $H^1_m$ finite-time blowup with solution $PC_T [\psi^{(m, g, 0)}]$ as in Remark \ref{H1blowup}. But such static solution is actually conjectured not to exist \cite{byeon2012gaugeNLS}. \end{rmk}

 (\ref{CSS}) with $g \ge 1$ can be viewed as a gauged version of the mass-critical focusing Schr\"odinger equation
 \beq  (i\partial_t + \Delta)u = - |u|^{\frac{4}{d}}u. \tag{NLS} \label{NLS}
\eeq 
It shares many essential features with (\ref{NLS}) like symmetries, conservation laws and soliton behaviors.  So it's worthwhile to review the results of (\ref{NLS}).
 
(\ref{NLS}) is also $L^2$-critical with pseudoconformal symmetry. It has a unique standing wave soliton \cite{berestycki1979existence, kwong1989uniqueness}  $e^{it}R(x)$, with $R(x)$ radial, positive and Schwartz, solving an elliptic equation
\beq \Delta R - R + R^3 = 0.\eeq
 Weinstein \cite{weinstein1983nonlinear}  proved that any $H^1$ initial data with mass less than $\|R\|_{L^2}$ will generate a global solution. Killip-Tao-Visan \cite{killip2007cubic} then showed  global wellposedness and scattering for radial data with mass below the threshold, and the higher dimension case was solved by \cite{killip2009mass}.
Finally, Dodson \cite{dodson2015global} extends this threshold result to general $L^2$ non-radial data for all dimensions.
 
Now we come to the threshold characterization results for (\ref{NLS}). Combined with virial argument and a rigidity result of Weinstein \cite{weinstein1986structure}, Merle \cite{merle1992uniqueness,merle1993determination} proved the rigidity of blowup at finite time for $H^1$ data with threshold mass. The proof was simplified by Hmidi-Karaani \cite{hmidi2005blowup} via profile decomposition. On infinite-time blowup, the first result owes to Killip-Li-Visan-Zhang \cite{killip2009characterization}, who showed that for $d \ge 4$ $H^1$ radial data (and later for $d = 2, 3$ in \cite{li2012rigidity}), the rigidity theorem like Theorem \ref{rigidinfselfdual} holds. Li-Zhang then developed a local iteration scheme to obtain additional regularity in \cite{li2010regularity}, which implies the rigidity result in $L^2(\real^d)$ for radial data as $d \ge 4$\cite{li2010regularity} and later for splitting-spherical symmetry as $d = 6$ \cite{li2010focusing}. Our work is in similar spirit with \cite{merle1993determination, killip2009characterization,li2012rigidity}.

However, we should also point out some differences between (\ref{CSS}) and (\ref{NLS}). Firstly,   (\ref{NLS}) admits standing wave $e^{it}R(x)$ with $R$  decaying exponentially, while ground state $Q$ for self-dual (\ref{CSS}) is static solution with only polynomial decay.  Secondly, ground state $R$ for (\ref{NLS}) serves as an extremizer of the Gagliardo-Nirenberg  interpolation inequality which is important in many of the compactness argument, but we don't have such characterization  for $Q$. Besides,
the non-local nonlinearity for (\ref{CSS}) makes the analysis more challenging, especially when we need to analyze the linearized operator around the ground state. On the other hand, we are lucky enough that
 the Bogomol'nyi operator and the self-duality structure of (\ref{CSS}) are of great help to overcome the new difficulties and prove our results.

At last, we would like to mention one more series of relevant results: characterization of threshold solution for energy-critical equations. The pioneering work is attributed to Duyckaerts-Merle \cite{duyckaerts2008dynamics,duyckaerts2009dynamic}. They characterized the threshold radial solution for $d = 3, 4, 5$ of energy-critical wave equation and Schr\"odinger equation by detailed spectral analysis, modulation method and concentration-compactness method. 
For subsequent related works, we refer to \cite{li2009dynamics,li2011dynamics,duyckaerts2010threshold,ibrahim2014threshold,miao2015dynamics,jendrej2018two}.

 \subsection{Outline of the proof}
    Our result consists of two parts, finite-time blowup (Theorem \ref{rigidfinselfdual} and Theorem \ref{rigidfinnonselfdual}) and infinite-time blowup (Theorem \ref{rigidinfselfdual}).  The  starting point is  the variational characterization of the ground state (see the elliptic theory in \S \ref{sec2.2}).  Then the proofs bifurcate since they rely on very different strategies. 

    \emph{1. Rigidity of Finite-time Blowup.}
     The proof for finite-time case follows the framework of \cite{hmidi2005blowup}, which serves as a simplification of Merle's original work \cite{merle1992uniqueness, merle1993determination}. We start with a sequential rigidity result in Proposition \ref{rigidseqselfdual}. Specifically, if a sequence of threshold charge functions blows up in $H^1$ norm with energy bounded, then they converge to   the soliton up to symmetry in $H^1$.   
For a solution $\phi$ blowing up at finite time $T$, this sequential rigidity implies that along a sequence of time there is charge concentration. We can then apply  truncated virial identity and explore the relation of virial quantity with energy (\ref{coorp}) to conclude $E(e^{i\frac{|x|^2}{4T}} \phi_0) = 0$, which forces $\phi$ to be the soliton after pseudoconformal transform.

We remark that   the Bogomol'nyi operator (\ref{bogomol-r}) and the $m$-equivariance condition help us to compensate for the lack of sharp Gagliardo-Nirenberg inequality. In fact our argument is even simpler than \cite{hmidi2005blowup}. Also, the above argument works for both self-dual and non-self-dual cases.

        \emph{2. Rigidity of Infinite-time Blowup.} In this case, the proof is more complicated.  The minimal blow up solution is characterized as having the compactness property, which is further illustrated as almost periodic modulo symmetry \cite{liu2016global}, see Theorem~\ref{almostpms}.  This property indicates a uniform localization of charge.  If we can further control the kinetic energy   uniformly small near infinity(Theorem \ref{stronglocal}),  we quickly reach a contradiction using   virial-type argument  if the energy is positive.         
        
        So the main difficulty reduces to the proof of  Theorem \ref{stronglocal}. We  proceed in the spirit of    \cite{killip2009characterization} and its improvement \cite{li2012rigidity}.  In  \cite{killip2009characterization}, Killip-Li-Visan-Zhang used the  in-out decomposition, weighted Strichartz estimates and non-scattering Duhamel's principle  to prove the uniform localization of kinetic energy for  minimal infinite time blowup solutions of (\ref{NLS}), for $d\geq 4$. The  high power of nonlinearity causes trouble in low dimension. To overcome the difficulty,   Li-Zhang \cite{li2012rigidity} used modulation analysis to prove  a weaker localization theorem (similar to Theorem~\ref{weaklocal}).  This technique requires a good understanding of the spectral information for linearized operator around ground state.   For (\ref{CSS}),  the linearized operator $\mathcal{L}_Q$ is non-local,  which is usually difficult to analyze. Luckily the self-duality provides good structure, and the spectral information is carefully studied in \cite{kim2019css}. Also there are more nonlocal terms to deal with, and that makes  this part of proof particuarly long and complicated.  
         
  The structure of this article is as follows. In Section \ref{sec2}, we recall the Bogomol'nyi operator, elliptic theory, truncated virial estimate as preparations. Section \ref{sec3} and \ref{sec4} deal with the finite-time case and the infinite-time case, respectively. Only the infinite-time blowup rigidity demands a great deal of harmonic analysis and spectral analysis for (\ref{CSS}), so we will record those tools therein. We remark that throughout the rest of this paper, we only consider non-self-dual case in \S \ref{3.3}.

\section{Preliminaries}\label{sec2}

\subsection{Notations}\label{sec2.1} Since we mainly work with $m$-equivaraint function $\phi(x)=e^{im\theta}u(r): \real^2 \rightarrow \cpx$, we usually refer to the radial part of such function $\phi$ as $u$. And we won't distinguish them as acted by functional or operators, if there's no confusion. For example, $A_\alpha [u]:= A_\alpha [\phi]$, $E[u]:= E[\phi, A[\phi]]$ and $\| u \|_{\dot{H}^1_m} := \| f \|_{\dot{H}^1_m} = \| f \|_{\dot{H}^1}$. 

We write $X\lesssim Y$ or $Y\gtrsim X$ to indicate $X\leq CY$ for
some constant $C>0$.  If $C$ depends upon some additional parameters, we will indicate this with subscripts. For example, $X\lesssim_\phi Y$ means $X\leq C(\phi) Y$.
We use $O(Y)$  to denote any quantity $X$ such that $|X|\lesssim Y$.




\subsection{Bogomol'nyi operator}

We first introduce the Bogomol'nyi operator
\beq \bm{D}_+ := D_1 + iD_2 = e^{i\theta} \left( \bm{D}_r + \frac{i}{r}\bm{D}_\theta \right) \eeq
Then we have Bogomol'nyi identity
\beq | \bm{D}_x \phi|^2 = | \bm{D}_+ \phi|^2 + \nabla \times \bm{J} - F_{12} |\phi|^2 \label{bogomol}\eeq
where $\bm{J} := (\mathrm{Im}(\bar{\phi} \bm{D}_1 \phi), \mathrm{Im}(\bar{\phi} \bm{D}_2 \phi))$. Using Green's formula, we can rewrite the energy functional
\beq E[\phi] = \int_{\real^2} \left( \frac{1}{2} |\bm{D}_x \phi|^2 - \frac{g}{4} |\phi|^4 \right) dx = \int_{\real^2} \left( \frac{1}{2} |\bm{D}_+ \phi|^2 + \frac{1-g}{4} |\phi|^4 \right)dx.\eeq
%
Under the equivariant ansatz (\ref{mequi}), the Bogomol'nyi operator takes the form 
\[ \bm{D}_+ \phi = \left[ \left(\partial_r - \frac{m+A_\theta}{r} \right) u \right] e^{i(m+1)\theta} \]
Taking the radial part, we also use $\bm{D}_+$ to denote its action
\beq \bm{D}_+ u = \left(\partial_r - \frac{m+A_\theta}{r} \right) u. \label{bogomol-r}\eeq

For self-dual case $g = 1$, the energy turns into
\beq E[\phi] = \int_{\real^2} \frac{1}{2} |\bm{D}_+ \phi|^2  dx.\label{squareenergy}\eeq
In this case, the minimizer of energy will satisfy a solvable first-order PDE $\bm{D}_+ u = 0$, implying the following variational characterization easily. 

\subsection{Variational characterization of the ground state}\label{sec2.2}

We record the variational characterization of ground states of (\ref{CSS}) from elliptic theory, for both self-dual case and non-self-dual case. For completeness, we give their proofs in Appendix \ref{AppA}.

\begin{prop}[Variational characterization in self-dual case, \cite{byeon2012gaugeNLS, byeon2016standing, kim2019css}]\label{varcharselfdual}
  Let $ g=1$. For $\phi_0 \in H^1_m(\real^2) - \{ 0\}$, we have $E[\phi_0] \ge 0$. Moreover, $E[\phi_0] = 0$ if and only if $\phi_0$ equal to $\phi^{(m)}$ in (\ref{solitonpsi}) up to $L^2$ scaling and phase rotation, which also implies that $\phi$ is a static solution of (\ref{CSS}). 
\end{prop}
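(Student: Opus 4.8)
The plan is to exploit the self-dual structure of the energy functional~\eqref{squareenergy}, which reduces the whole statement to understanding the first-order equation $\bm{D}_+ u = 0$. First, observe that nonnegativity is immediate: for $g = 1$, Green's formula gives $E[\phi_0] = \frac12 \int_{\real^2} |\bm{D}_+ \phi_0|^2\,dx \ge 0$ for every $\phi_0 \in H^1_m$, provided the boundary term from integration by parts vanishes, which must be justified using the $m$-equivariant decay of $H^1_m$ functions (the term $\nabla \times \bm J$ integrates to zero and $F_{12}|\phi|^2 = -\frac12|\phi|^4$ accounts for the quartic term). The equality case $E[\phi_0] = 0$ then forces $\bm{D}_+ \phi_0 = 0$ almost everywhere, i.e., in radial form
\beq \partial_r u = \frac{m + A_\theta[u]}{r}\, u. \eeq

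Next I would solve this ODE explicitly. Since $A_\theta[u](r) = -\frac12 \int_0^r |u(s)|^2 s\,ds$ depends on $u$ itself, this is a nonlinear ODE, but it is of a solvable Bernoulli/Riccati type. Writing $\rho(r) := |u(r)|^2$ (one checks $u$ can be taken real and positive up to phase rotation, since $\bm{D}_+ u = 0$ with $A_r = 0$ forces the phase of $u$ to be constant), the equation becomes $\frac12 (\log \rho)' = \frac{m + A_\theta}{r}$ with $A_\theta' = -\frac12 r \rho$. Differentiating and substituting yields a closed second-order (or, after one integration, first-order separable) equation for $A_\theta$, whose solution under the boundary conditions $A_\theta(0) = 0$ and $u \in L^2$ (which pins down the decay and forces $m + A_\theta(\infty)$ to have the correct sign/value) is exactly $A_\theta[u](r) = -\frac12 \cdot \frac{\mu^2 r^{2(m+1)}}{1 + \mu^2 r^{2(m+1)}}\cdot\frac{2(m+1)}{\text{(const)}}$ up to the scaling parameter, leading to $u(r) = c\,\mu\, \dfrac{(\mu r)^m}{1 + (\mu r)^{2(m+1)}}$. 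Matching the normalization with~\eqref{solitonq} identifies $c\mu$ and the scaling, so $\phi_0 = e^{i\gamma}\lambda\, \phi^{(m)}(\lambda\,\cdot)$ for some $\gamma \in [0, 2\pi)$, $\lambda > 0$. Conversely, $\phi^{(m)}$ solves $\bm{D}_+ u = 0$ by~\eqref{solitonq}, hence has zero energy, and being a critical point of $E$ among $m$-equivariant fields it is a static solution of~\eqref{CSS} — alternatively one verifies directly that $Q = Q^{(m)}$ solves~\eqref{station} with $\omega = 0$.

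The main obstacle I anticipate is not the ODE integration but the functional-analytic bookkeeping at the endpoints $r = 0$ and $r = \infty$: one must show the boundary terms in the Bogomol'nyi integration by parts genuinely vanish for \emph{all} $\phi_0 \in H^1_m$ (not just Schwartz data), justify that a zero-energy element actually satisfies $\bm D_+ u = 0$ in a strong enough sense to run the ODE argument, and verify that the integrability constraint $u \in L^2$ selects precisely the one-parameter family above rather than some spurious branch of solutions to the ODE (in particular ruling out solutions that blow up or fail to decay). Since the paper defers the full proof to Appendix~\ref{AppA} and cites \cite{byeon2012gaugeNLS, byeon2016standing, kim2019css}, I would lean on those references for the delicate decay estimates and present the ODE computation as the conceptual heart of the argument.
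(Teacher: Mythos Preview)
Your proposal is correct and follows essentially the same route as the paper: reduce via \eqref{squareenergy} to $\bm{D}_+ u = 0$, argue the phase is constant, and integrate the resulting nonlinear ODE. The paper's execution differs only in minor technicalities---it first uses Gronwall on \eqref{selfdualeq} to show $|u|>0$ on $(0,\infty)$ (a step you skipped but need before taking logarithms or fixing the phase), then makes the substitution $w=\log(r^{-2m}|u|^2)$ to obtain the radial Liouville equation $w''+\tfrac{1}{r}w'+r^{2m}e^{w}=0$ and invokes ODE uniqueness at $r=0$ to identify the one-parameter scaling family, rather than solving explicitly as you propose.
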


\begin{prop}[Variational characterization in non-self-dual case, \cite{liu2016global}]\label{varcharnonselfdual}
  Let $g > 1$. For $\phi_0 \in H^1_m - \{ 0\}$, $\| \phi_0 \|_{L^2} \le c_{m, g}$, we have $E[\phi_0] \ge 0$. Moreover, if $E[\phi_0] = 0$, then $\|\phi_0 \|_{L^2} = c_{m, g}$ and there exists $\alpha \ge 0$ s.t. $\psi^{(m, g,\alpha)}(t, x) = e^{i\alpha t} \phi^{(m, g,\alpha)}(x)$ is a standing wave solution of (\ref{CSS}). We also know any solution $\phi^{(m, g,\alpha)} $ decays exponentially for $\alpha > 0$.
\end{prop}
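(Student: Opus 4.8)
The plan is to reduce both assertions to the sharp, charge-constrained Gagliardo--Nirenberg-type inequality that underlies the coercivity of the energy below the threshold, and which is essentially contained in \cite{liu2016global}. Rewriting the energy by the Bogomol'nyi identity (\ref{bogomol}) and Green's formula,
\[ E[\phi] \;=\; \tfrac12\,\|\bm{D}_x\phi\|_{L^2}^2 - \tfrac{g}{4}\,\|\phi\|_{L^4}^4 \;=\; \tfrac12\,\|\bm{D}_+\phi\|_{L^2}^2 - \tfrac{g-1}{4}\,\|\phi\|_{L^4}^4 , \]
the bound $E[\phi_0]\ge 0$ for $\|\phi_0\|_{L^2}\le c_{m,g}$ is equivalent to the coercivity inequality
\[ \tfrac g4\,\|\phi\|_{L^4}^4 \;\le\; \tfrac12\,\|\bm{D}_x\phi\|_{L^2}^2 ,\qquad \|\phi\|_{L^2}\le c_{m,g}, \]
which in turn follows once one knows the sharp constant and the extremizers of the quotient $\|\bm{D}_x\phi\|_{L^2}^2/\|\phi\|_{L^4}^4$ on the ball $\{\,0<\|\phi\|_{L^2}\le c_{m,g}\,\}\subset H^1_m$: namely, that its infimum equals $g/2$ and is attained only at functions of charge exactly $c_{m,g}$. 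The equality case of this inequality will then feed directly into the rigidity part.

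To pin down the sharp constant I would run a Weinstein-type variational argument. The quotient is invariant under phase rotation and under the $L^2$-critical scaling (\ref{l2scaling}) — indeed $A_\theta[\,\cdot\,]$ and $A_0[\,\cdot\,]$ are scaling-covariant, hence so is $\bm{D}_x$ — so one fixes a charge level $\|\phi\|_{L^2}=M$ and minimizes. A minimizing sequence is bounded in $H^1_m$ by the two-dimensional Gagliardo--Nirenberg inequality; invoking a linear profile decomposition adapted to the equivariant class (no translation parameters are present) in the spirit of Hmidi--Keraani \cite{hmidi2005blowup}, together with the observation that $A_\theta[\phi]$ and $A_0[\phi]$ are order-zero radial-averaging functionals of $|\phi|^2$ (so they are semicontinuous under weak convergence and cannot leak charge to spatial infinity), one excludes vanishing and dichotomy and extracts a minimizer $\phi_*$. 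Its Euler--Lagrange equation, after an $L^2$-rescaling, is of the form of the stationary equation (\ref{station}) for some multiplier $\omega$, so $e^{i\omega t}\phi_*$ is a standing wave of (\ref{CSS}); and since (\ref{l2scaling}) fixes $\|\phi_*\|_{L^2}$ while acting on the energy by $E\mapsto\lambda^2E$, criticality of $\phi_*$ (equivalently, of $E+\tfrac{\omega}{2}\|\cdot\|_{L^2}^2$) forces $E[\phi_*]=0$, i.e. $\|\bm{D}_x\phi_*\|_{L^2}^2=\tfrac g2\|\phi_*\|_{L^4}^4$. A two-sided comparison then identifies $\|\phi_*\|_{L^2}$ with the minimal standing-wave charge $c_{m,g}$: on one hand $\phi_*$ is a standing wave, so $\|\phi_*\|_{L^2}\ge c_{m,g}$; on the other, any standing wave of charge $c_{m,g}$ also has zero energy by the same scaling identity, hence realizes the quotient value $g/2$ at charge $c_{m,g}$, forcing the infimum over the ball to equal $g/2$, attained exactly at charge $c_{m,g}$ and strictly larger below it. This is the sharp inequality claimed above.

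Granting this, the second assertion is immediate: if $E[\phi_0]=0$ with $0\ne\phi_0\in H^1_m$ and $\|\phi_0\|_{L^2}\le c_{m,g}$, then $\|\bm{D}_x\phi_0\|_{L^2}^2/\|\phi_0\|_{L^4}^4=g/2$, so $\phi_0$ attains the infimum; by the previous step this forces $\|\phi_0\|_{L^2}=c_{m,g}$, and $\phi_0$ solves (\ref{station}) for some $\omega$, which is nonzero by the elliptic classification for $g>1$ (cf. \cite{byeon2012gaugeNLS,byeon2016standing}). Thus $\psi^{(m,g)}(t,x):=e^{i\omega t}\phi_0(x)$ is the required standing wave, with $\alpha=\omega$. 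The main obstacle is the compactness step in the constrained minimization: in contrast with (\ref{NLS}), the functional carries the non-local, nonlinear gauge terms through $A_\theta[\phi]$ and $A_0[\phi]$, so the classical sharp Gagliardo--Nirenberg machinery does not transfer verbatim and one must verify the semicontinuity of each piece by hand; a second, more structural difficulty is showing that the sharp quotient is monotone in the charge, so that the threshold coincides with the \emph{minimal} standing-wave charge $c_{m,g}$ rather than a smaller value, which is where the equivalence with the soliton theory of \cite{byeon2016standing,liu2016global} enters. Alternatively — and this is the route appropriate for an appendix whose purpose is completeness — one simply quotes the sharp inequality and the identification of $c_{m,g}$ from \cite{liu2016global} and runs the short deduction above. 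It is instructive to contrast this with the self-dual case (Proposition~\ref{varcharselfdual}), where $E=\tfrac12\|\bm{D}_+\phi\|_{L^2}^2$ is manifestly nonnegative and vanishes exactly on the explicit solutions of $\bm{D}_+\phi=0$; it is the loss of this complete-square structure for $g>1$ that forces the variational detour.
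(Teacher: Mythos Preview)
Your brief alternative at the end (``simply quote the sharp inequality and the identification of $c_{m,g}$ from \cite{liu2016global}'') is in fact the paper's route, and it is much shorter than your main proposal. With $c_{m,g}:=\inf\{\|\phi\|_{L^2}:\phi\in H^1_m\setminus\{0\},\,E[\phi]\le 0\}$, the positivity $E[\phi_0]\ge 0$ on $\{\|\phi_0\|_{L^2}\le c_{m,g}\}$ is the contrapositive of a one-line $\beta$-scaling: if $E[\phi]<0$ then $\beta\mapsto E[\beta\phi]$ is continuous and still negative for $\beta$ slightly less than $1$, giving a competitor of strictly smaller charge and contradicting the infimum. For the rigidity, $E[\phi_0]=0$ with $\|\phi_0\|_{L^2}\le c_{m,g}$ forces $\|\phi_0\|_{L^2}=c_{m,g}$ directly from the definition of $c_{m,g}$, and then $\phi_0$ is a constrained minimizer of $\|\cdot\|_{L^2}^2$ on $\{E=0\}$; its Euler--Lagrange equation is exactly (\ref{station}) with $\omega=-1/\lambda$ for the Lagrange multiplier $\lambda\ne 0$, yielding the standing wave with $\alpha=\omega\ne 0$. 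No profile decomposition or compactness step is invoked here.

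Your main Weinstein-type proposal has a circularity. The Euler--Lagrange equation for the quotient $\|\bm{D}_x\phi\|_{L^2}^2/\|\phi\|_{L^4}^4$ at fixed charge reads
\[ -\Delta_m\phi_*+\tfrac{2m}{r}A_\theta[\phi_*]\phi_*+A_0[\phi_*]\phi_*+\tfrac{1}{r^2}A_\theta[\phi_*]^2\phi_*-2J_*\,|\phi_*|^2\phi_*=\nu\phi_*, \]
which is (\ref{station}) with coupling $g'=2J_*$, not $g$. Thus $e^{i\omega t}\phi_*$ is a standing wave of the Chern--Simons--Schr\"odinger system with the \emph{wrong} coupling unless you already know $J_*=g/2$; the step ``$\phi_*$ is a standing wave, so $\|\phi_*\|_{L^2}\ge c_{m,g}$'' therefore presupposes the conclusion. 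The Pohozaev/scaling step suffers the same defect: criticality of $\phi_*$ is with respect to the action $E^{(g')}+\tfrac{\omega}{2}\|\cdot\|_{L^2}^2$, so what you obtain is $E^{(g')}[\phi_*]=0$, not $E^{(g)}[\phi_*]=0$. The ``monotonicity in charge'' difficulty you flag is exactly what the paper's $\beta$-scaling dissolves in one stroke without ever touching a quotient.
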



We remark that Proposition \ref{varcharnonselfdual} is weaker than Proposition \ref{varcharselfdual} by lack of uniqueness of the ground state. This accounts for the difference between Theorem \ref{rigidfinselfdual} and Theorem \ref{rigidfinnonselfdual}.

\subsection{Truncated virial Identity}

 The general virial identity for (\ref{1CSSoriginal}) is computed in \cite{liu2016global}. We establish the truncated version through direct computation here.

\begin{prop}[Truncated virial identity]\label{trunvirialident} Let $(\phi, A_0, A_1, A_2)$ be a solution to (\ref{1CSSoriginal}), and $\chi \in C^\infty_{0,\mathrm{rad}} (\real^2)$, we have

\begin{align}
  \partial_t  &\int_{\real^2} \chi(r)|\phi|^2 dx  = 2 \iint \partial_r \chi \mathrm{Im}(\bar{\phi}\bm{D}_r \phi)   rdrd\theta \label{trunvirial1}\\
  \partial_t^2 &\int_{\real^2} \chi(r)|\phi|^2 dx = 2 \iint 2r\partial_r \chi |\bm{D}_r\phi|^2 +\left[ \frac{1}{r^2} \partial_r(r \partial_r \chi) - \partial_r (\frac{1}{r} \partial_r \chi)\right] |\bm{D}_\theta \phi|^2  \nonumber \\
  & - \frac{1}{2}g\partial_r (r\partial_r \chi) |\phi|^4  +  \left[ -\partial_r^3(r\partial_r \chi) 
  + \frac{1}{2}\partial_r^2 \partial_r \chi + \frac{1}{2}\partial(\frac{1}{r} \partial_r\chi) \right] |\phi|^2 drd\theta \label{trunvirial2}
\end{align}
 \end{prop}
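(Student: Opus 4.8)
The plan is to prove the truncated virial identity \eqref{trunvirial1}--\eqref{trunvirial2} by direct computation, differentiating the localized mass $\int \chi(r)|\phi|^2\,dx$ in time and using the equation \eqref{1CSSoriginal} (equivalently \eqref{1csspol}, which is adapted to polar coordinates) together with the gauge relations \eqref{1ar0}. First I would establish \eqref{trunvirial1}. Writing $\partial_t|\phi|^2 = 2\mathrm{Re}(\bar\phi\,\partial_t\phi)$ and using $\partial_t\phi = \bm{D}_t\phi - iA_0\phi$, the $A_0$ term drops out since $\mathrm{Re}(\bar\phi\, iA_0\phi)=0$, so $\partial_t|\phi|^2 = 2\mathrm{Re}(\bar\phi\,\bm{D}_t\phi)$. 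Substituting the first equation of \eqref{1csspol} for $\bm{D}_t\phi$, one gets $\partial_t|\phi|^2 = 2\mathrm{Re}\big(\bar\phi\, i(\bm{D}_r^2+\tfrac1r\bm{D}_r+\tfrac1{r^2}\bm{D}_\theta^2)\phi\big)$ (the $ig|\phi|^2\phi$ term is purely imaginary against $\bar\phi$, hence contributes nothing). Then I multiply by $\chi$, integrate over $\real^2$, and integrate by parts in $r$ (and use that $\bm D_\theta$ is skew-adjoint on the circle, so the $\bm D_\theta^2$ term integrates to something with no $\chi'$ once $\chi$ is radial — more precisely it must be handled carefully, see below). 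The key identity is $\int \chi\,\mathrm{Re}(\bar\phi\, i\,r^{-1}\partial_r(r\bm D_r\phi))\,r\,dr = -\int \partial_r\chi\,\mathrm{Re}(\bar\phi\, i\,\bm D_r\phi)\,r\,dr + \dots$, and $\mathrm{Re}(\bar\phi\, i\,\bm D_r\phi) = -\mathrm{Im}(\bar\phi\,\bm D_r\phi)$, which after collecting signs yields \eqref{trunvirial1}.

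For \eqref{trunvirial2} I would differentiate \eqref{trunvirial1} once more in time. This requires computing $\partial_t \mathrm{Im}(\bar\phi\,\bm D_r\phi)$. I would use the product rule, the covariance identity $\partial_t(\bm D_r\phi) = \bm D_r(\bm D_t\phi) + i F_{tr}\phi$ where $F_{tr}=\partial_t A_r-\partial_r A_0$ is given by the second equation of \eqref{1csspol}, and again substitute the Schrödinger-type equation for $\bm D_t\phi$. One then obtains an expression involving $|\bm D_r\phi|^2$, $|\bm D_\theta\phi|^2$, $|\phi|^4$, and lower-order curvature/commutator terms; the commutators $[\bm D_r,\bm D_\theta]$ and $[\bm D_r, \tfrac1{r^2}\bm D_\theta^2]$ bring in the magnetic field $F_{12} = -\tfrac12|\phi|^2$ via \eqref{1ar0}, which is precisely the source of the $|\phi|^2$ term with the differential operator coefficient $[-\partial_r^3(r\partial_r\chi)+\tfrac12\partial_r^2\partial_r\chi+\tfrac12\partial_r(\tfrac1r\partial_r\chi)]$ in \eqref{trunvirial2}. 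After all substitutions I integrate against $\chi$ over $\real^2$, integrate by parts repeatedly in $r$ to move all derivatives onto $\chi$, and collect terms. The coefficient $2r\partial_r\chi$ on $|\bm D_r\phi|^2$, the bracket $\tfrac1{r^2}\partial_r(r\partial_r\chi)-\partial_r(\tfrac1r\partial_r\chi)$ on $|\bm D_\theta\phi|^2$, and $-\tfrac12 g\,\partial_r(r\partial_r\chi)$ on $|\phi|^4$ should all emerge from this bookkeeping.

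The main obstacle will be organizing the integration by parts so that the curvature/commutator contributions and the genuinely quartic contributions are separated cleanly, and tracking the weights $r$ correctly throughout (every integral is $\iint \cdot\, r\,dr\,d\theta$, and several integrations by parts in $r$ interact with the $\tfrac1r\bm D_r$ and $\tfrac1{r^2}\bm D_\theta^2$ factors). A subtle point is that one must verify all boundary terms at $r=0$ and $r=\infty$ vanish: this uses $\chi\in C^\infty_{0,\mathrm{rad}}$ (compact support kills the $r=\infty$ boundary) together with the regularity/decay of $m$-equivariant $H^1$ solutions at the origin, where factors of $r$ from the measure and the vanishing of $\bm D_\theta\phi$-type quantities compensate the singular weights. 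A careful, if routine, check here is what makes the "truncated" identity rigorous rather than merely formal.

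Finally, I would double-check the result against the known (non-truncated) virial identity from \cite{liu2016global} by formally setting $\chi(r)=r^2$: then $\partial_r\chi = 2r$, $\partial_r(r\partial_r\chi)=\partial_r(2r^2)=4r$, the bracket multiplying $|\bm D_\theta\phi|^2$ becomes $\tfrac1{r^2}\partial_r(2r^2)-\partial_r(2) = \tfrac{4}{r}$, and the last bracket multiplying $|\phi|^2$ vanishes since every term is a third-or-higher derivative of a quadratic. This recovers the standard Morawetz/virial quantity and confirms the coefficients, giving a good consistency check before presenting the proof.
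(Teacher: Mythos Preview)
Your proposal is correct and follows essentially the same direct-computation approach as the paper: differentiate the localized charge, use the equation \eqref{1csspol}, and integrate by parts. The paper organizes the calculation via the stress-energy tensor $T_{00}=\tfrac12 r|\phi|^2$, $T_{0r}=r\,\mathrm{Im}(\bar\phi\bm{D}_r\phi)$, $T_{0\theta}=\tfrac1r\,\mathrm{Im}(\bar\phi\bm{D}_\theta\phi)$ and the local conservation law $\partial_\alpha T_{0\alpha}=0$, and for the second identity it cites \cite[Lemma 5.1]{liu2016global} for the ready-made formula for $\partial_t T_{0r}$ rather than rederiving it via your commutator/curvature bookkeeping---this shortens the presentation but is the same computation you outline.
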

 \begin{rmk}
In particular, when $\chi = |x|^2$ and $|x|\phi \in L^2$, a limiting argument implies the virial identity
\begin{align}
    \partial_t \int_{\real^2} |x|^2 |\phi|^2 &= 4 \int_{\real^2} \mathrm{Im} (\bar{\phi}r\bm{D}_r \phi),\\
    \partial_t ^2\int_{\real^2} |x|^2 |\phi|^2 &= 16 E[\phi] \label{virialstand}.
  \end{align} 
The quadratic structure implies its cooperation with energy
  \beq  8t^2E[e^{i \frac{|x|^2}{4t}} \phi(0)] = \int_{\real^2} |x|^2 |\phi(t, x)|^2, \label{coorp}
  \eeq
  via direct computation.
 
\end{rmk}

\begin{proof}
  We define the stress-energy tensor 
  \[ T_{00} = F_{\theta r}= \frac{1}{2}r |\phi|^2, \quad T_{0r} = F_{0\theta} = r \mathrm{Im} (\bar{\phi}\bm{D}_r \phi), \quad T_{0\theta} = F_{r0} = \frac{1}{r} \mathrm{Im}(\bar{\phi}\bm{D}_\theta \phi).
  \]
  From $dF = d^2 A = 0$, we have 
  \beq \partial_\alpha T_{0\alpha} = 0. \label{stressenergy} \eeq
  Recall \cite[Lemma 5.1]{liu2016global}
  \begin{align}
  \partial_t T_{0r} =& - (2 + 2r\partial_r) |\bm{D}_r \phi|^2 + \frac{1}{2} rg\partial_r |\phi|^4 \nonumber\\
   & + \frac{1}{2} \partial_r |\bm{D}_\theta \phi|^2 - \frac{2}{r} \partial_\theta \mathrm{Re} (\overline{\bm{D}_\theta \phi} \bm{D}_r \phi)\label{t0r} \\ 
    & + r\partial_r \left[ \frac{1}{r^2} \left( \frac{1}{2} \partial_\theta^2 |\phi|^2 - |\bm{D}_\theta \phi|^2 \right) \right] + \left( \frac{1}{2}r \partial_r^3 + \frac{1}{2} \partial_r^2 - \frac{1}{2r} \partial_r \right) |\phi|^2\nonumber.
  \end{align}  
  Now we apply (\ref{stressenergy}) to show (\ref{trunvirial1})
  \begin{align*} & \partial_t  \int_{\real^2} \chi(r)|\phi|^2 dx =  2 \iint \chi(r)\partial_t T_{00} drd\theta \\
   =& - 2 \iint \chi(r) (\partial_r T_{0r} + \partial_\theta T_{0\theta}) drd\theta   = 2 \iint \partial_r \chi T_{0r} drd\theta.  \end{align*}
Next we can invoke (\ref{t0r}) and take derivative of time again to get (\ref{trunvirial2}).
  \end{proof}
  
 If we take $\chi$ to be a smooth truncation of $|x|^2$, the computation within $\{|x| \le R \}$ remains the same as the standard virial identity (\ref{virialstand}). The following estimates follows immediately.
   
 \begin{coro}\label{trunvirialest}
 Let  $(\phi, A_0, A_1, A_2)$ be a solution to (\ref{1CSSoriginal}), and $\chi_R = R^2\chi(R^{-1} \cdot)$ is the smooth cutoff of $|x|^2$, with $\chi \in C^\infty_{0,rad} (\{|x| \le 2\})$ and 
  \[ \chi(x) = \chi(|x|) =
  \left\{ \begin{array}{rl}
  |x|^2, &  |x| < 1,\\
  0,& |x| \ge 2.
  \end{array}
  \right. \]
Then
  \begin{align}
    \partial_t \int_{\real^2} &\chi_R |\phi|^2 = 2 \int_{\real^2} \partial_r \chi_R \text{Im} (\bar{\phi}\bm{D}_r \phi), \label{tvi1}\\
    \partial_t ^2\int_{\real^2} \chi_R |\phi|^2 = 16 E[\phi] + O&\left(\frac{1}{R^2} \int_{|x| \ge R} |\phi|^2\right) + O\left( \int_{|x|\ge R} |\nabla \phi|^2 +  \int_{|x|\ge R} |\phi|^4 \right)\label{tvi2}
  \end{align}
  \end{coro}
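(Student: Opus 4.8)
The plan is to specialize the identities of Proposition~\ref{trunvirialident} to the rescaled cutoff $\chi=\chi_R=R^2\chi(R^{-1}\cdot)$ and to separate the contribution of the ball $\{|x|<R\}$, on which $\chi_R$ coincides with $|x|^2$ together with all of its derivatives, from that of the annulus $\{R\le|x|\le 2R\}$, which is where all of the error is produced (the region $\{|x|>2R\}$ contributes nothing since $\chi_R\equiv 0$ there).

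First, (\ref{tvi1}) is simply (\ref{trunvirial1}) with $\chi$ replaced by $\chi_R$: since $\iint g\, r\, dr\, d\theta=\int g\, dx$, the right-hand side of (\ref{trunvirial1}) equals $2\int\partial_r\chi_R\,\mathrm{Im}(\bar\phi\bm{D}_r\phi)\,dx$, which is exactly (\ref{tvi1}). No further work is needed here.

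For (\ref{tvi2}), insert $\chi=\chi_R$ into (\ref{trunvirial2}) and split every integral as $\iint_{\{r<R\}}(\cdots)+\iint_{\{R\le r\le 2R\}}(\cdots)$. On $\{r<R\}$ the integrand is, by construction, exactly the one that yields the untruncated virial identity (\ref{virialstand}), so this piece equals $16E[\phi]$ minus that same density integrated over $\{r\ge R\}$; using $|\bm{D}_r\phi|^2+r^{-2}|\bm{D}_\theta\phi|^2=|\bm{D}_x\phi|^2$, the subtracted quantity is $O\big(\int_{|x|\ge R}|\nabla\phi|^2+\int_{|x|\ge R}|\phi|^4\big)$. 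On the annulus $\{R\le r\le 2R\}$, which is where $\chi_R$ departs from $|x|^2$, one invokes the scaling bounds $|\partial_r^k\chi_R|\lesssim R^{2-k}$ and $r\sim R$; estimating each of the four terms of (\ref{trunvirial2}) in turn and converting $dr\,d\theta$ back to $dx$, the $|\bm{D}_r\phi|^2$ and $|\bm{D}_\theta\phi|^2$ terms contribute $O\big(\int_{|x|\ge R}|\nabla\phi|^2\big)$, the $|\phi|^4$ term contributes $O\big(\int_{|x|\ge R}|\phi|^4\big)$, and the $|\phi|^2$ term — whose coefficient is built from the highest ($r$-)derivatives of $\chi_R$ — contributes $O\big(R^{-2}\int_{|x|\ge R}|\phi|^2\big)$, the extra gain $R^{-2}$ coming precisely from those two additional derivatives falling on the rescaled cutoff. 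Adding the two regions gives (\ref{tvi2}).

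The only delicate point — and the step I would double-check most carefully — is the bookkeeping of the radial weights: one must track the Jacobian $r\,dr\,d\theta$ against the powers of $r$ already present in the coefficients of (\ref{trunvirial2}) and against the $R^{2-k}$ decay of $\partial_r^k\chi_R$, so that the $|\phi|^2$ error genuinely carries the gain $R^{-2}$ while the kinetic and quartic errors carry only $O(1)$ constants. Beyond this routine accounting there is no analytic obstacle.
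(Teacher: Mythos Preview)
Your proposal is correct and is exactly the approach the paper intends: the paper gives no detailed argument beyond the sentence ``the computation within $\{|x|\le R\}$ remains the same as the standard virial identity \eqref{virialstand}. The following estimates follows immediately,'' and your write-up is the natural fleshing-out of that remark. The splitting into $\{r<R\}$ versus $\{R\le r\le 2R\}$, together with the scaling bounds $|\partial_r^k\chi_R|\lesssim R^{2-k}$ on the annulus, is precisely what is needed.
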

\begin{rmk}By constructing $\chi$ through convolution, it's easy to verify that it satisfies
\beq |\nabla \chi_R (x)|^2 \lesssim \chi_R (x) \label{decayprop}\eeq
\end{rmk}

\section{Finite time blow up}\label{sec3}

In this section, we prove Theorem \ref{rigidfinselfdual} and Theorem \ref{rigidfinnonselfdual}. We first consider the self-dual case, and then the non-self-dual case with small modification.

\subsection{Rigidity of normalized sequence}

We show the rigidity of normalized $H^1$ blowup sequence. 

Recall 
\begin{align*} \| \bm{D}_x f \|_{L^2}^2 =& \|D_r f\|_{L^2}^2 + \|D_\theta f\|_{L^2}^2 =  \| \partial_r f \|^2_{L^2} + \left\| \frac{m+A_\theta[f]}{r} f \right\|^2_{L^2}, \\
E[f] =& \frac{1}{2}\|\bm{D}_+ f \|_{L^2}^2=\frac{1}{2}\| \bm{D}_x f\|^2_{L^2} -  \frac{1}{4}\|f\|_{L^4}^4,\end{align*}
and the scaling property
\beq \|\bm{D}_x (\lambda f(\lambda \cdot))\|_{L^2}^2 = \lambda^2 \| \bm{D}_x f\|^2_{L^2},\qquad \forall \,\lambda \in \real_+. \label{energyscaling}
\eeq

To begin with, we discuss some properties of $\| \bm{D}_x f\|^2_{L^2}$, showing it is to some extent equivalent to $\dot{H}^1_m$ norm. The first two lemmas are concerned with size and weak convergence. 

  \begin{lem}\label{equilem1}
  For $m \ge 1$, $f \in H^1_m$,
  \begin{align} \left\| \frac{1}{r} f \right\|_{L^2} \lesssim_{m, \|f\|_{L^2}} \left\| \frac{m+A_{\theta}[f]}{r} f \right\|_{L^2},  \label{lem311}\end{align}
  \begin{align}\left\| \frac{A_{\theta}[f]}{r} f \right\|_{L^2} \lesssim \|f\|_{L^2}^2 \|\partial_r f\|_{L^2}. \label{lem312}
  \end{align}
  As a consequence, for $m \ge 0$,  $f \in H^1_m$,
  \beq \|f\|^2_{\dot{H}^1_m(\real^2)} \sim_{m, \|f\|_{L^2}} \| \bm{D}_x f\|^2_{L^2} \label{lem313} \eeq
  \end{lem}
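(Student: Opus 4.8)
The plan is to establish the three claims of Lemma~\ref{equilem1} in order, then combine them. Throughout, $f = e^{im\theta}u(r) \in H^1_m$, so $A_\theta[f](r) = -\tfrac12\int_0^r |u(s)|^2 s\,ds$ is a nonpositive, nonincreasing function of $r$ with $|A_\theta[f](r)| \le \tfrac12\|f\|_{L^2}^2/(2\pi)$ bounded, and $A_\theta[f](r)\to 0$ as $r\to 0$ with rate $A_\theta[f](r) = O(r^2\sup|u|^2)$ near the origin. The key structural point is that $m + A_\theta[f]$ is close to $m$ where $r$ is small (so no cancellation with the $m$ in $\frac{m}{r}$ can occur there), while for large $r$ one can afford crude bounds because $\frac1r$ is small.

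For \eqref{lem311}: split $\real^2$ into $\{r \le r_0\}$ and $\{r > r_0\}$ where $r_0 = r_0(m,\|f\|_{L^2})$ is chosen so that $|A_\theta[f](r)| \le m/2$ for $r \le r_0$; such $r_0$ exists because $|A_\theta[f](r)| \le \tfrac{1}{4\pi} r^2 \|u\|_{L^\infty(0,r)}^2$... but $\|u\|_{L^\infty}$ need not be controlled by $\|f\|_{L^2}$ alone, so instead use $|A_\theta[f](r)| \le \tfrac12 \int_0^r |u(s)|^2 s\,ds \le \tfrac{1}{4\pi}\|f\|_{L^2}^2$ only as a global bound, and for the smallness near zero use the absolute continuity / monotone convergence: $\lim_{r\to 0}A_\theta[f](r) = 0$, so there is $r_0$ depending on $f$ — but we want dependence only on $m,\|f\|_{L^2}$. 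To get that uniformity one observes that for $m\ge 1$ we in fact only need $|m + A_\theta[f]| \ge m - \tfrac{1}{4\pi}\|f\|_{L^2}^2$ globally when $\|f\|_{L^2}^2 < 4\pi m$; in the threshold regime $\|f\|_{L^2}^2 = \|Q\|_{L^2}^2 = 8\pi(m+1) > 4\pi m$, so this crude bound fails and one genuinely must localize. The honest route: on $\{r \le r_0\}$, $\frac{1}{r}|f| \le \frac{2}{m}\frac{|m+A_\theta[f]|}{r}|f|$ once $|A_\theta[f]|\le m/2$ there; on $\{r>r_0\}$, $\|\frac1r f\|_{L^2(r>r_0)} \le \frac{1}{r_0}\|f\|_{L^2}$, and we must dominate this by the right-hand side — here use that $A_\theta[f]$ is bounded away from $-m$ only if charge is subcritical, OR absorb it differently: on $\{r > r_0\}$ bound $\|\frac1r f\|_{L^2}$ by interpolation/Hardy against $\|\partial_r f\|_{L^2}$ and $\|f\|_{L^2}$, then note this is already controlled and doesn't need the weighted term — this only works if $\|\frac1r f\|_{L^2(r>r_0)}$ can be bounded by $\|\partial_r f\|_{L^2}$; indeed $\int_{r>r_0} \frac{|u|^2}{r^2} r\,dr$, integrating by parts against $\partial_r(|u|^2)$, gives control by $\|u\|_{L^2}\|\partial_r u\|_{L^2} + \frac1{r_0}$-boundary terms, and $\|\partial_r f\|_{L^2} \le \|\bm{D}_x f\|_{L^2}+\|\frac{A_\theta}{r}f\|_{L^2}$, closing the loop via \eqref{lem312}. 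So \eqref{lem311} and \eqref{lem312} should be proven together, or \eqref{lem312} first.

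For \eqref{lem312}: write $\left\|\frac{A_\theta[f]}{r}f\right\|_{L^2}^2 = 2\pi\int_0^\infty \frac{A_\theta[f](r)^2}{r^2}|u(r)|^2 r\,dr$. Use $|A_\theta[f](r)| \le \tfrac12\int_0^r|u|^2 s\,ds$ and the pointwise radial Hardy/Gagliardo–Nirenberg bound $|u(r)|^2 \lesssim \|u\|_{L^2}\|\partial_r u\|_{L^2}$ (valid for radial functions on $\real^2$, from $|u(r)|^2 = -\int_r^\infty \partial_s(|u(s)|^2)\,ds \le 2\int_0^\infty|u||\partial_r u|\,ds \lesssim \|u\|_{L^2(rdr)}\|\partial_r u\|_{L^2(rdr)}$ after inserting the $s\,ds$ weight carefully — actually $|u(r)|^2 r \lesssim \ldots$; the clean statement is $\|u\|_{L^\infty(r\,dr\text{-radial})}^2 \lesssim \|u\|_{L^2}\|\partial_r u\|_{L^2}$). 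Then bound $\frac{|A_\theta[f](r)|}{r} \le \tfrac12 \frac1r\int_0^r |u(s)|^2 s\,ds \le \tfrac12 \|u\|_{L^\infty}^2 \cdot \tfrac{r}{2}$, hmm, that grows — better: $\frac{|A_\theta[f](r)|^2}{r^2}\le \frac{1}{4}\Big(\frac1r\int_0^r|u|^2 s\,ds\Big)^2$ and by Cauchy–Schwarz $\frac1r\int_0^r|u|^2 s\,ds \le \big(\int_0^r |u|^4 s\,ds\big)^{1/2}\big(\tfrac1{r^2}\int_0^r s\,ds\big)^{1/2}\lesssim \|u\|_{L^4}^2$; alternatively accept an extra $\|f\|_{L^4}^2$ and then use the Gagliardo–Nirenberg $\|f\|_{L^4}^4\lesssim \|f\|_{L^2}^2\|\nabla f\|_{L^2}^2$. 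Reconciling these gives exactly the stated $\|f\|_{L^2}^2\|\partial_r f\|_{L^2}$. I would present \eqref{lem312} via: $\frac{|A_\theta[f]|}{r}|u| \le \tfrac12 \|u\|_{L^2_{\text{wt}}}\cdot(\text{something})$, using the pointwise $L^\infty$ bound on $u$ once and Cauchy–Schwarz once, landing on $\|f\|_{L^2}\cdot\|f\|_{L^2}\|\partial_r f\|_{L^2}^{1/2}\cdot(\ldots)$ — the bookkeeping of weights is the routine-but-fiddly part.

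Finally, \eqref{lem313} follows by combining: $\|\bm{D}_x f\|_{L^2}^2 = \|\partial_r f\|_{L^2}^2 + \|\tfrac{m+A_\theta}{r}f\|_{L^2}^2$, and $\|f\|_{\dot H^1_m}^2 = \|\partial_r f\|_{L^2}^2 + \|\tfrac mr f\|_{L^2}^2$. The inequality $\|\bm{D}_x f\|_{L^2}^2 \lesssim \|f\|_{\dot H^1_m}^2$ is immediate from $|m+A_\theta|\le m + \tfrac1{4\pi}\|f\|_{L^2}^2 \lesssim_{m,\|f\|_{L^2}} m$ (using $\tfrac mr|f|$ when $m\ge1$; the $m=0$ case is trivial since then $\bm{D}_x f = \partial_r f + \tfrac{A_\theta}r f$ and \eqref{lem312} handles the second term against $\|\partial_r f\|_{L^2}\le \|f\|_{\dot H^1_0}$ after absorbing). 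The reverse $\|f\|_{\dot H^1_m}^2\lesssim \|\bm{D}_x f\|_{L^2}^2$ uses \eqref{lem311} (for $m\ge1$) to bound $\|\tfrac mr f\|_{L^2}\le m\|\tfrac 1r f\|_{L^2}\lesssim \|\tfrac{m+A_\theta}r f\|_{L^2}\le\|\bm{D}_x f\|_{L^2}$, plus $\|\partial_r f\|_{L^2}\le\|\bm{D}_x f\|_{L^2}+\|\tfrac{A_\theta}r f\|_{L^2}$ with the last term absorbed by \eqref{lem312} and Young's inequality (since $\|f\|_{L^2}^2\|\partial_r f\|_{L^2}$ can be made a small multiple of $\|\partial_r f\|_{L^2}$ only if $\|f\|_{L^2}$ small — otherwise use $\|f\|_{L^2}^2\|\partial_r f\|_{L^2}\le \epsilon\|\partial_r f\|^2 + C\|f\|_{L^2}^4$... no, degrees don't match; instead note $\|\tfrac{A_\theta}r f\|_{L^2}\lesssim\|f\|_{L^2}^2\|\partial_r f\|_{L^2}$ means if $\|f\|_{L^2}^2$ happens to be large we cannot absorb — so the constant in $\sim_{m,\|f\|_{L^2}}$ must be allowed to depend on $\|f\|_{L^2}$, and one reruns the split-at-$r_0$ argument to get a bound with the right dependence). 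The main obstacle, as this discussion shows, is getting the constant $r_0$ (hence all constants) to depend only on $m$ and $\|f\|_{L^2}$ and not on finer features of $f$, in the regime where $\|f\|_{L^2}^2$ exceeds $4\pi m$ so that the naive global bound $|m+A_\theta|\ge m/2$ fails; resolving it requires the localization argument near the origin together with a separate Hardy-type estimate on the exterior region, which I expect to be the technical heart of the proof.
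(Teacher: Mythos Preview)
Your proposal has the right overall architecture---split at a radius where $|A_\theta|\le m/2$, handle the inner region by the pointwise lower bound $|m+A_\theta|\ge m/2$, and handle the outer region separately---but you are missing the key trick that makes \eqref{lem311} close with dependence only on $m$ and $\|f\|_{L^2}$.

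Concretely: define $R$ by $A_\theta[f](R)=-m/2$, i.e.\ $\|f\|_{L^2(B_R)}^2=2\pi m$. On $B_R$ you already have $\|\tfrac{m+A_\theta}{r}f\|_{L^2(B_R)}^2\ge \tfrac{m^2}{4}\|\tfrac1r f\|_{L^2(B_R)}^2$. The point you did not find is that the \emph{same} left-hand side also dominates the exterior piece: since $\tfrac1{r^2}\ge\tfrac1{R^2}$ on $B_R$,
\[
\Big\|\tfrac{m+A_\theta}{r}f\Big\|_{L^2(B_R)}^2 \;\ge\; \tfrac{m^2}{4}\cdot\tfrac{1}{R^2}\cdot\|f\|_{L^2(B_R)}^2 \;=\; \tfrac{\pi m^3}{2R^2},
\]
while on $B_R^c$ trivially $\|\tfrac1r f\|_{L^2(B_R^c)}^2\le \tfrac{1}{R^2}\|f\|_{L^2}^2$. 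The factors of $R^{-2}$ cancel, leaving a ratio $\tfrac{2\|f\|_{L^2}^2}{\pi m^3}$ depending only on $m,\|f\|_{L^2}$. Your attempts to control the exterior by $\|\partial_r f\|_{L^2}$ or by Hardy-type integration by parts are headed the wrong way, since the right-hand side of \eqref{lem311} does not contain $\partial_r f$.

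For \eqref{lem312} you have the right ingredient (Strauss) but fumbled the weight: the correct pointwise bound is $|u(r)|\lesssim r^{-1/2}\|\partial_r u\|_{L^2}^{1/2}\|u\|_{L^2}^{1/2}$, which gives $|A_\theta[f](r)|\le\tfrac12\int_0^r|u(s)|^2 s\,ds\lesssim r\,\|\partial_r f\|_{L^2}\|f\|_{L^2}$, hence $\|\tfrac{A_\theta}{r}\|_{L^\infty}\lesssim\|\partial_r f\|_{L^2}\|f\|_{L^2}$ and the estimate follows in one line. Your versions without the $r^{1/2}$ weight led to the growing bound you noticed.

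For \eqref{lem313} no absorption argument is needed. For $m\ge1$, the bound $\|\bm{D}_x f\|^2\lesssim\|f\|_{\dot H^1_m}^2$ comes directly from $|m+A_\theta|\le m+\tfrac1{4\pi}\|f\|_{L^2}^2$, and the reverse from \eqref{lem311}. For $m=0$, $\|f\|_{\dot H^1_0}^2=\|\partial_r f\|_{L^2}^2\le\|\bm{D}_x f\|_{L^2}^2$ is trivial, and the other direction is $\|\bm{D}_x f\|^2=\|\partial_r f\|^2+\|\tfrac{A_\theta}{r}f\|^2\lesssim(1+\|f\|_{L^2}^4)\|\partial_r f\|^2$ by \eqref{lem312}.
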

  
\begin{lem}\label{equilem2}
  For $m \ge 0$, $f_n$ and $f$ uniformly bounded in $H^1_m(\real^2)$, and $f_n \rightharpoonup f$ in $H^1$, we have
  \begin{align}
     \partial_r f_n &\rightharpoonup \partial_r f\qquad \text{in}\,\, L^2, \label{lem321} \\
     \frac{1}{r}A_\theta[f_n] f_n &\rightharpoonup \frac{1}{r} A_\theta[f]f \qquad \text{in}\,\, L^2, \label{lem322}
  \end{align}
  and for $m \ge 1$, 
  \beq   \frac{1}{r}f_n \rightharpoonup \frac{1}{r} f\qquad \text{in}\,\, L^2. \label{lem323} \eeq
  In particular,
  \beq \bm{D}_+ f_n = \partial_r f_n - \frac{m+A_\theta[f_n]}{r} f_n \rightharpoonup \bm{D}_+ f,\qquad \text{in}\,\, L^2  \label{lem324}
  \eeq
\end{lem}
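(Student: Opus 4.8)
The plan is to establish the three weak convergences (\ref{lem321}), (\ref{lem322}), (\ref{lem323}) one at a time and then add them to obtain (\ref{lem324}). Two standing facts will be used repeatedly: (a) $f_n\rightharpoonup f$ in $H^1(\real^2)$ yields $f_n\rightharpoonup f$ and $\nabla f_n\rightharpoonup\nabla f$ in $L^2$; and (b) by Rellich--Kondrachov together with the uniform $H^1$ bound, $f_n\to f$ strongly in $L^2_{\mathrm{loc}}(\real^2)$.

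For (\ref{lem321}) I would use the pointwise identity $\partial_r g=\tfrac{x}{|x|}\cdot\nabla g$, valid for $m$-equivariant $H^1$ functions: since $\tfrac{x}{|x|}$ is a bounded (unit-modulus) vector field, pairing $\nabla f_n$ with $\tfrac{x}{|x|}\psi$ for $\psi\in L^2$ converts (a) into $\langle\partial_r f_n,\psi\rangle\to\langle\partial_r f,\psi\rangle$. For (\ref{lem323}) (the case $m\ge1$), the Hardy-type bound $\|\tfrac1r g\|_{L^2}\le\tfrac1m\|\tfrac mr g\|_{L^2}\le\tfrac1m\|g\|_{\dot{H}^1_m}$ behind (\ref{lem311}) shows that $\tfrac1r f_n$ is uniformly bounded in $L^2$ and $\tfrac1r f\in L^2$; since a uniformly bounded $L^2$-sequence converging weakly against a dense set converges weakly in $L^2$, it suffices to test against $\psi\in L^2$ supported in an annulus $\{0<a\le|x|\le b\}$, where $\tfrac1r$ is a bounded multiplier, and there $\langle\tfrac1r f_n,\psi\rangle=\langle f_n,\tfrac1r\psi\rangle\to\langle f,\tfrac1r\psi\rangle$ by (a).

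The substantive term is (\ref{lem322}). First, by (\ref{lem312}) and the uniform $H^1$ bound, $\tfrac1r A_\theta[f_n]f_n$ is uniformly bounded in $L^2$ and its putative limit $\tfrac1r A_\theta[f]f$ lies in $L^2$, so once more it is enough to test against $\psi\in L^2$ supported in an annulus $\{a\le|x|\le b\}$. Next, since $A_\theta[g](r)=-\tfrac12\int_0^r|g(s)|^2\,s\,ds$ is a fixed negative multiple of $\|g\|_{L^2(\{|x|\le r\})}^2$, fact (b) gives $A_\theta[f_n](r)\to A_\theta[f](r)$ for every $r>0$, with $\sup_n\|A_\theta[f_n]\|_{L^\infty}\lesssim\sup_n\|f_n\|_{L^2}^2<\infty$. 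Then, with $\psi$ supported in $\{a\le|x|\le b\}$, I would write
\[
\langle\tfrac1r A_\theta[f_n]f_n,\psi\rangle=\langle\tfrac1r A_\theta[f_n](f_n-f),\psi\rangle+\langle\tfrac1r A_\theta[f_n]f,\psi\rangle,
\]
where the first term is $O\big(a^{-1}\,\sup_n\|A_\theta[f_n]\|_{L^\infty}\,\|f_n-f\|_{L^2(\{|x|\le b\})}\,\|\psi\|_{L^2}\big)\to0$ by (b), and the second tends to $\langle\tfrac1r A_\theta[f]f,\psi\rangle$ by dominated convergence, the integrand being dominated on the annulus by a fixed multiple of $|f||\psi|\in L^1$ and converging pointwise. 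Density together with the uniform $L^2$ bound then upgrades this to (\ref{lem322}).

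Finally, writing $\bm{D}_+ f_n=\partial_r f_n-\tfrac mr f_n-\tfrac{A_\theta[f_n]}{r}f_n$ (the middle term absent when $m=0$) and summing the three limits yields (\ref{lem324}). I expect the only genuinely delicate point to be the nonlocal product $\tfrac1r A_\theta[f_n]f_n$: it is a product of three factors that could a priori misbehave both near the spatial origin (where $\tfrac1r$ is singular) and at spatial infinity, and it is the a priori $L^2$ bound of Lemma \ref{equilem1} that controls the origin and permits the reduction to annulus-supported test functions, after which the strong $L^2_{\mathrm{loc}}$ convergence from Rellich handles the convergence of $A_\theta[f_n]$ and of $f_n$ on the annulus. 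Everything else is a routine consequence of weak $H^1$ convergence.
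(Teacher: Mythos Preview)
Your proof is correct and follows the same overall scaffold as the paper: identical treatment of (\ref{lem321}) via the bounded multiplier $x/|x|$, and the same ``uniform $L^2$ bound $+$ density reduction to test functions supported away from the origin'' scheme for (\ref{lem323}) and (\ref{lem322}).

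The genuine difference is in how (\ref{lem322}) is closed. The paper restricts to radial test functions $g\in C^\infty_{c,\mathrm{rad}}$, applies Fubini to rewrite the pairing as $\int_0^\infty |f_n(s)|^2 s\,G_{n,R}(s)\,ds$ with $G_{n,R}(s)=\int_{\max\{R,s\}}^\infty f_n\bar g\,dr$, and then passes to the limit using the compact embedding $H^1_{\mathrm{rad}}\hookrightarrow L^4$ together with dominated convergence for $G_{n,R}$. You instead invoke only Rellich in $L^2_{\mathrm{loc}}$, using it twice: once to force pointwise convergence $A_\theta[f_n](r)\to A_\theta[f](r)$, and once to kill the cross term $\langle\tfrac1r A_\theta[f_n](f_n-f),\psi\rangle$ on the annulus, then finish by dominated convergence. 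Your route is more elementary (no Fubini rearrangement, no $L^4$ embedding) and arguably cleaner; the paper's rearrangement, on the other hand, makes the nonlocal structure of $A_\theta$ more explicit. Both are complete.
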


The following lemma indicates that we can recover strong convergence in $H^1$ through $L^2$ convergence for $v_n$ and $\bm{D}_+ v_n$.

\begin{lem}\label{equilem3}
      Suppose we have $\{v_n\}$ and $v$ uniformly bounded in $H^1_m$, and 
    \begin{align*}
      v_n \rightharpoonup v\quad \text{in}\,\, H^1,&\qquad v_n \rightarrow v\quad \text{in}\,\, L^2,  \\
      \bm{D}_+ v_n \rightarrow \bm{D}_+ v\quad \text{in}\,\,L^2,&\qquad v_n \rightarrow v \quad \text{in} \,\, L^4.
    \end{align*}
    Then we will have
    \[ v_n \rightarrow v\qquad \text{in}\,\, H^1. \]
\end{lem}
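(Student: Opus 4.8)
\textbf{Proof proposal for Lemma~\ref{equilem3}.}

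The plan is to exploit the self-dual identity $E[f] = \tfrac12\|\bm{D}_+ f\|_{L^2}^2 = \tfrac12\|\bm{D}_x f\|_{L^2}^2 - \tfrac14\|f\|_{L^4}^4$ to upgrade control of $\bm{D}_+ v_n$ and the $L^4$ norm into control of the full covariant gradient $\|\bm{D}_x v_n\|_{L^2}$, and then to combine this with Lemma~\ref{equilem1} to recover the $\dot H^1_m$ norm. First I would observe that since $v_n \to v$ in $L^4$ and $\bm{D}_+ v_n \to \bm{D}_+ v$ in $L^2$, we get
\[
\|\bm{D}_x v_n\|_{L^2}^2 = \|\bm{D}_+ v_n\|_{L^2}^2 + \tfrac12\|v_n\|_{L^4}^4 \longrightarrow \|\bm{D}_+ v\|_{L^2}^2 + \tfrac12\|v\|_{L^4}^4 = \|\bm{D}_x v\|_{L^2}^2,
\]
so the covariant kinetic energies converge. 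It remains to turn this norm convergence together with weak convergence into strong $\dot H^1_m$ convergence.

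Next I would pass from the covariant gradient to the ordinary one. The obstruction is that $\|\bm{D}_x v_n\|_{L^2}^2 = \|\partial_r v_n\|_{L^2}^2 + \|\tfrac{m+A_\theta[v_n]}{r}v_n\|_{L^2}^2$ mixes the two pieces and the cross terms in $|\tfrac{m+A_\theta}{r}v_n|^2$ involve the nonlocal $A_\theta$. I would expand
\[
\left\|\frac{m+A_\theta[v_n]}{r}v_n\right\|_{L^2}^2 = m^2\left\|\frac{v_n}{r}\right\|_{L^2}^2 + 2m\int \frac{A_\theta[v_n]}{r^2}|v_n|^2\,dx + \left\|\frac{A_\theta[v_n]}{r}v_n\right\|_{L^2}^2,
\]
and argue that the last two (nonlocal) terms converge to their counterparts for $v$: indeed $A_\theta[v_n](r) = -\tfrac12\int_0^r |v_n|^2 s\,ds \to A_\theta[v](r)$ pointwise with a uniform bound $|A_\theta[v_n](r)| \lesssim \|v_n\|_{L^2}^2$ from the $L^2$ convergence, and $\tfrac{1}{r^2}|v_n|^2$, $\tfrac{1}{r}v_n$ converge by Lemma~\ref{equilem2} combined with the $L^2$ and (for $m=0$ trivially, for $m\ge1$ via Lemma~\ref{equilem1}) Hardy-type bounds; dominated convergence then handles the integrals. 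Hence from convergence of $\|\bm{D}_x v_n\|_{L^2}^2$ we deduce that $\|\partial_r v_n\|_{L^2}^2 + m^2\|\tfrac{v_n}{r}\|_{L^2}^2 = \|v_n\|_{\dot H^1_m}^2$ converges to $\|v\|_{\dot H^1_m}^2$.

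Finally, since $v_n \rightharpoonup v$ in $H^1$ we have $\partial_r v_n \rightharpoonup \partial_r v$ in $L^2$ and $\tfrac{m}{r}v_n \rightharpoonup \tfrac{m}{r}v$ in $L^2$ (the latter again from Lemma~\ref{equilem2}), so in the Hilbert space $L^2 \times L^2$ the pairs $(\partial_r v_n, \tfrac{m}{r}v_n)$ converge weakly to $(\partial_r v, \tfrac{m}{r}v)$ while their norms converge; in a Hilbert space weak convergence plus norm convergence gives strong convergence, so $\partial_r v_n \to \partial_r v$ in $L^2$. Together with $v_n \to v$ in $L^2$ this yields $v_n \to v$ in $H^1$. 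The main obstacle I anticipate is justifying the convergence of the nonlocal cross term $\int \tfrac{A_\theta[v_n]}{r^2}|v_n|^2$ and the term $\|\tfrac{A_\theta[v_n]}{r}v_n\|_{L^2}^2$ cleanly; here I would lean on the uniform pointwise bound for $A_\theta$, the estimate \eqref{lem312} of Lemma~\ref{equilem1} for equi-integrability near the origin and at infinity, and the weak-convergence statements of Lemma~\ref{equilem2}, rather than attempting a direct strong-convergence argument for $A_\theta[v_n]$.
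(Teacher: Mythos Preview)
Your overall strategy is sound and genuinely different from the paper's. The paper never passes through the Bogomol'nyi identity; instead it expands $\|\bm{D}_+ v_n - \bm{D}_+ v\|_{L^2}^2$ directly, freezes the coefficient at $v$ by writing the difference as $\partial_r(v_n-v)-\tfrac{m+A_\theta[v]}{r}(v_n-v)$ plus a remainder $\tfrac{A_\theta[v_n]-A_\theta[v]}{r}v_n$, and then uses integration by parts together with $\partial_r A_\theta[v]=-\tfrac{r}{2}|v|^2$ to turn the cross term into $-\tfrac12\int |v_n-v|^2|v|^2$. This yields $\|\partial_r(v_n-v)\|_{L^2}^2+\|\tfrac{m+A_\theta[v]}{r}(v_n-v)\|_{L^2}^2\to 0$ in one stroke, without ever invoking ``weak + norm $\Rightarrow$ strong''. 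Your route through $\|\bm D_x v_n\|_{L^2}^2=\|\bm D_+ v_n\|_{L^2}^2+\tfrac12\|v_n\|_{L^4}^4$ and the Hilbert-space argument is more conceptual and equally valid.

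The one place where your write-up is not yet rigorous is the convergence of the nonlocal pieces. Lemma~\ref{equilem2} only gives \emph{weak} convergence of $\tfrac{1}{r}A_\theta[v_n]v_n$ and $\tfrac{1}{r}v_n$, and ``dominated convergence'' is not the right mechanism here (you do not have a.e.\ convergence of the full sequence without passing to subsequences). The clean fix is to observe, via H\"older in the defining integral,
\[
\bigl|A_\theta[v_n](r)-A_\theta[v](r)\bigr|\le\tfrac12\int_0^r |v_n-v|\,|v_n+v|\,s\,ds
\lesssim r\,\|v_n-v\|_{L^4}\|v_n+v\|_{L^4},
\]
so that $\|\tfrac{A_\theta[v_n]-A_\theta[v]}{r}\|_{L^\infty}\to 0$. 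Combined with $\|\tfrac{A_\theta[v]}{r}\|_{L^\infty}<\infty$ and $v_n\to v$ in $L^2$, this gives $\tfrac{A_\theta[v_n]}{r}v_n\to\tfrac{A_\theta[v]}{r}v$ \emph{strongly} in $L^2$, hence the quadratic term converges; the cross term (present only for $m\ge 1$) is then the pairing of this strongly convergent sequence with the weakly convergent $\tfrac{m}{r}v_n$ from Lemma~\ref{equilem2}, so it converges as well. With this correction your argument is complete.
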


We leave their proof in Appendix \ref{AppB}. Now we work on the rigidity of $H^1$ blowup sequence. Take a time sequence of $H^1$ blowup solution. By rescaling, we normalize the charge and $\dot{H}^1$ norm with the energy going to zero. The proposition shows that such sequence must converge to the ground state. 

\begin{prop}\label{rigidseqselfdual}
For $m \ge 0$, let $v_n \in H^1_m(\real^2)$, satisfying
\begin{align*} 
\| v_n \|_{L^2} = \| \phi^{(m)}\|_{L^2}, & \qquad \|v_n \|_{\dot{H}^1_m} = \|\phi^{(m)} \|_{\dot{H}^1_m}, \\
E[v_n] = \frac{1}{2}\|\bm{D}_+ v_n\|_{L^2}^2 & \rightarrow 0 \qquad \text{as} \,\,n \rightarrow \infty,
\end{align*}  
where $\phi^{(m)} = Qe^{im\theta} \in H^1_m$ is the static soliton.
Then there exists $\gamma \in [0, 2\pi)$ and a subsequence of $\{v_n\}$ (still denoted by $\{ v_n\}$) such that
\beq v_n \rightarrow e^{i\gamma} \phi^{(m)} \quad \text{in}\, H^1.
\eeq
\end{prop}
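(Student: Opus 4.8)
The plan is to use a concentration-compactness argument. Since $\{v_n\}$ is uniformly bounded in $H^1_m$ (the $L^2$ and $\dot H^1_m$ norms are fixed, and by Lemma~\ref{equilem1} the full $\dot H^1_m$ norm is comparable to $\|\bm D_x v_n\|_{L^2}^2$), we may pass to a subsequence with $v_n \rightharpoonup v$ weakly in $H^1$. The first goal is to show $v \neq 0$ and that, after suitable phase/scaling normalization, $v = e^{i\gamma}\phi^{(m)}$ with full recovery of norms, so that weak convergence upgrades to strong convergence. The key input enabling this is that the energy goes to zero: by the variational characterization Proposition~\ref{varcharselfdual}, any nonzero $H^1_m$ function with zero energy is $\phi^{(m)}$ up to scaling and phase, so if I can show the weak limit $v$ is nonzero with $E[v] \le \liminf E[v_n] = 0$, then $v$ is forced to be a rescaled soliton. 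Because the $L^2$ and $\dot H^1_m$ scales are pinned down for each $v_n$, the scaling parameter of the limit should be trivial, giving $v = e^{i\gamma}\phi^{(m)}$ outright.

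Here are the steps in order. First, establish nontriviality of the weak limit: one needs a lower bound on $\|v\|_{L^4}$ (or equivalently on some concentration quantity). Since $E[v_n] = \tfrac12\|\bm D_x v_n\|_{L^2}^2 - \tfrac14\|v_n\|_{L^4}^4 \to 0$ with $\|\bm D_x v_n\|_{L^2}^2 \sim \|\phi^{(m)}\|_{\dot H^1_m}^2$ fixed and positive, we get $\|v_n\|_{L^4}^4 \to 2\|\bm D_x v_n\|_{L^2}^2 \gtrsim 1$ bounded away from zero. So the $L^4$ norms do not vanish. By an equivariant profile-decomposition / refined Sobolev embedding argument (in the spirit of \cite{hmidi2005blowup}), the non-vanishing $L^4$ mass together with the fixed scale forces $v_n$ (up to the subsequence, and using that the $\dot H^1_m$ scale is fixed so no nontrivial rescaling can escape) to have $v \neq 0$ and in fact $v_n \to v$ in $L^4$. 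Second, use weak lower semicontinuity: $\|v\|_{L^2} \le \|\phi^{(m)}\|_{L^2}$, and by Lemma~\ref{equilem2} (specifically \eqref{lem324}) $\bm D_+ v_n \rightharpoonup \bm D_+ v$ in $L^2$, hence $\|\bm D_+ v\|_{L^2}^2 \le \liminf \|\bm D_+ v_n\|_{L^2}^2 = 0$, so $E[v] = \tfrac12\|\bm D_+ v\|_{L^2}^2 = 0$. Since $v \neq 0$, Proposition~\ref{varcharselfdual} gives $v = e^{i\gamma}\mu\,\phi^{(m)}(\mu\cdot)$ for some $\gamma, \mu$. Third, pin down $\mu = 1$: from the $L^4$ convergence $\|v\|_{L^4}^4 = \lim\|v_n\|_{L^4}^4 = 2\|\phi^{(m)}\|_{\dot H^1_m}^2 = \|\phi^{(m)}\|_{L^4}^4$ (using $E[\phi^{(m)}]=0$), and since $\|\mu\phi^{(m)}(\mu\cdot)\|_{L^4}^4 = \mu^2\|\phi^{(m)}\|_{L^4}^4$, we conclude $\mu = 1$; alternatively use $\|v\|_{L^2} \le \|\phi^{(m)}\|_{L^2}$ combined with $\|\mu\phi^{(m)}(\mu\cdot)\|_{L^2} = \|\phi^{(m)}\|_{L^2}$ to get $\mu \ge 1$, and the $L^4$ identity then forces equality. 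Fourth, upgrade to strong $H^1$ convergence: we now have $v_n \rightharpoonup v$ in $H^1$, $\|v_n\|_{L^2} = \|v\|_{L^2}$ so $v_n \to v$ in $L^2$, $v_n \to v$ in $L^4$, and $\bm D_+ v_n \to \bm D_+ v$ in $L^2$ (weak convergence plus $\|\bm D_+ v_n\|_{L^2} \to 0 = \|\bm D_+ v\|_{L^2}$ gives norm convergence, hence strong convergence in the Hilbert space $L^2$). Lemma~\ref{equilem3} then yields $v_n \to v = e^{i\gamma}\phi^{(m)}$ in $H^1$.

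I expect the main obstacle to be the nontriviality of the weak limit (Step~1), i.e.\ ruling out that the $L^4$ mass of $v_n$ escapes to a different spatial scale or spreads out. In the classical \cite{hmidi2005blowup} argument this is handled by the sharp Gagliardo--Nirenberg inequality, whose extremizer is the soliton; here, as the authors note in the introduction, no such sharp inequality is available for $Q$. The way around it is that the $\dot H^1_m$ norm of each $v_n$ is \emph{fixed}, not merely bounded, which should prevent concentration at a finer scale from improving the $L^4$/kinetic balance — this is where the equivariance and the Bogomol'nyi structure do the work that the sharp Gagliardo--Nirenberg inequality does in the NLS case. Making this rigidity of scale precise, likely via a profile decomposition adapted to the equivariant setting and the observation that the energy functional $E = \tfrac12\|\bm D_+ \cdot\|_{L^2}^2$ is nonnegative (so each profile carries nonnegative energy and the energies sum), is the crux; once a single nontrivial profile is extracted at the correct scale, the remaining steps are soft.
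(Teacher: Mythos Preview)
Your overall strategy matches the paper's proof, but you are overcomplicating precisely the step you flag as the crux. Under the equivariant ansatz, $|v_n|$ is radial, and the Strauss-type compact embedding $H^1_{\mathrm{rad}}(\real^2)\hookrightarrow L^p_{\mathrm{rad}}(\real^2)$ for $2<p<\infty$ (hence $H^1_m\hookrightarrow L^4_m$ compactly) gives $v_n\to v$ strongly in $L^4$ \emph{directly} from weak $H^1$ convergence. No profile decomposition, refined Sobolev embedding, or sharp Gagliardo--Nirenberg substitute is needed; this is exactly the simplification the introduction alludes to when it says the argument is ``even simpler than \cite{hmidi2005blowup}.'' With $L^4$ convergence in hand and $\|v_n\|_{L^4}$ bounded below (as you correctly derive from $E[v_n]\to 0$ and $\|\bm D_x v_n\|_{L^2}\gtrsim\|v_n\|_{\dot H^1_m}$), nontriviality $v\neq 0$ is immediate, and your Steps~2--4 go through essentially as written.

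One minor correction in your Step~3: the identity $\lim\|v_n\|_{L^4}^4 = 2\|\phi^{(m)}\|_{\dot H^1_m}^2$ is not quite right, since what is fixed is $\|v_n\|_{\dot H^1_m}$, which by Lemma~\ref{equilem1} is only \emph{comparable} to $\|\bm D_x v_n\|_{L^2}$, not equal. The paper handles this more cleanly by reversing the order of your last two steps: first apply Lemma~\ref{equilem3} to obtain $v_n\to v$ in $H^1$ (its hypotheses --- weak $H^1$, strong $L^2$, strong $L^4$, and strong $L^2$ for $\bm D_+$ --- hold without knowing $\mu$, since $\|v\|_{L^2}=\|Q\|_{L^2}$ follows from scale-invariance of the charge in $\real^2$), and then read off $\mu=1$ from $\|v\|_{\dot H^1_m}=\lim\|v_n\|_{\dot H^1_m}=\|\phi^{(m)}\|_{\dot H^1_m}$.
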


The following corollary is another equivalent way of stating this rigidity, which comes easily after a contradiction argument. It will be useful in modulation analysis to control the $H^1_m$ norm by energy (see Lemma \ref{rigidquan}).

\begin{coro}\label{rigidqual}
  For $m \ge 0$, let $f \in H^1_m(\real^2)$, satisfying
\begin{align*} 
\| f \|_{L^2} = \| \phi^{(m)}\|_{L^2},  \qquad \|f \|_{\dot{H}^1_m} = \|\phi^{(m)}\|_{\dot{H}^1_m}. 
\end{align*}
Then the energy $E[f] \ge 0$ and for any $\epsilon >0$, there exists $\delta > 0$ such that if $E[f] < \delta$, then there exists $\gamma \in [0, 2\pi)$ such that 
\[ \| \phi^{(m)}- e^{i\gamma} f \|_{H^1_m} < \epsilon. \]
\end{coro}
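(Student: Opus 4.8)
\textbf{Proof proposal for Corollary \ref{rigidqual}.}

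The plan is to derive this as a direct corollary of the sequential rigidity Proposition \ref{rigidseqselfdual} via a standard contradiction/compactness argument. First, the nonnegativity $E[f] \geq 0$ is immediate from Proposition \ref{varcharselfdual} (since $f \neq 0$ under the normalization $\|f\|_{L^2} = \|\phi^{(m)}\|_{L^2} > 0$). So the content is the quantitative closeness statement. Suppose it fails: then there exist $\epsilon_0 > 0$ and a sequence $f_n \in H^1_m$ with $\|f_n\|_{L^2} = \|\phi^{(m)}\|_{L^2}$, $\|f_n\|_{\dot H^1_m} = \|\phi^{(m)}\|_{\dot H^1_m}$, and $E[f_n] \to 0$, yet $\inf_{\gamma \in [0,2\pi)} \|\phi^{(m)} - e^{i\gamma} f_n\|_{H^1_m} \geq \epsilon_0$ for all $n$.

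Now observe that the sequence $\{f_n\}$ satisfies exactly the hypotheses of Proposition \ref{rigidseqselfdual}: the $L^2$ norm and $\dot H^1_m$ norm are pinned to those of $\phi^{(m)}$, and $E[f_n] = \frac{1}{2}\|\bm{D}_+ f_n\|_{L^2}^2 \to 0$ (using the self-dual identity $E[f] = \frac{1}{2}\|\bm{D}_+ f\|_{L^2}^2$ recorded before the proposition). Hence, after passing to a subsequence, Proposition \ref{rigidseqselfdual} yields some $\gamma \in [0,2\pi)$ with $f_n \to e^{i\gamma}\phi^{(m)}$ in $H^1$. Then for $n$ large, $\|\phi^{(m)} - e^{-i\gamma} f_n\|_{H^1_m} = \|e^{i\gamma}\phi^{(m)} - f_n\|_{H^1_m} < \epsilon_0$, which (taking $-\gamma \bmod 2\pi$ as the rotation parameter) contradicts the assumption that the infimum over phases is at least $\epsilon_0$. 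This establishes the corollary.

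I do not anticipate a genuine obstacle here, since all the real work is already packaged into Proposition \ref{rigidseqselfdual}. The only minor points to be careful about are: (i) noting that $H^1$ convergence of $f_n$ together with the fixed equivariance class gives $H^1_m$ convergence (immediate, as $e^{i\gamma}\phi^{(m)} \in H^1_m$ and $H^1_m$ is a closed subspace of $H^1$); and (ii) making sure the phase rotation is handled symmetrically — replacing $\gamma$ by $2\pi - \gamma$ (or $0$ when $\gamma = 0$) to match the statement's quantifier $\exists \gamma \in [0,2\pi)$ with $\|\phi^{(m)} - e^{i\gamma} f\|_{H^1_m} < \epsilon$. That is all that is needed.
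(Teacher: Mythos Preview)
Your proposal is correct and follows exactly the approach the paper indicates: the text immediately preceding Corollary \ref{rigidqual} states that it ``comes easily after a contradiction argument'' from Proposition \ref{rigidseqselfdual}, which is precisely what you have written out. There is nothing to add.
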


\begin{proof}[Proof of Proposition \ref{rigidseqselfdual}]
   $\{v_n\}$ is uniformly bounded in $H^1_m$, so we can extract a weakly convergent subsequence (still denote by $\{v_n\}$) and $v \in H^1_m$
   \beq v_n \rightharpoonup v,\qquad \text{in}\,\, H^1.\label{conv1} \eeq
   
   Since $H^1_{\text{rad}}(\real^2)$ compactly embeds in $L^p_{\text{rad}}(\real^2)$ for any $p \in (2, \infty)$, it's easy to see $H^1_m(\real^2)$ also compactly embeds in $L^p_m(\real^2)$. Hence
   \beq v_n \rightarrow v, \qquad \text{in} \, L^4_m. \label{conv2}\eeq
   Note that the normalization condition and Lemma \ref{equilem1} implies
   \[ \| \bm{D}_x v_n\|^2_{L^2} \gtrsim_m \|v_n\|_{\dot{H}^1_m}^2 =  \|Q\|_{\dot{H}^1_m}^2. \]
   So
   \beq  \|v\|_{L^4} = \lim_{n\rightarrow \infty}  \|v_n\|_{L^4} = \lim_{n\rightarrow \infty} (-4 E[v_n] + 2\| \bm{D}_x v_n\|^2_{L^2}) \gtrsim_m \|Q\|^2_{\dot{H}^1_m} >0. \label{nonzero} \eeq
   
   Also from Lemma \ref{equilem2}, we have 
   \beq E[v] = \frac{1}{2}\|\bm{D}_+ v\|^2_{L^2} \le \liminf_{n\rightarrow \infty}  \frac{1}{2}\|\bm{D}_+ v_n\|^2_{L^2} = 0. \eeq
   Note that (\ref{nonzero}) ensures that $v$ is a non-zero function. Proposition \ref{varcharselfdual} implies that $v$ is ground state up to symmetry. Namely, there exists $ \gamma \in [0, 2\pi),\,\lambda \in \real_+$, such that 
   \[ v = e^{i\gamma} \lambda Q(\lambda r) e^{im\theta}. \]
    Hence 
   \beq \|v\|_{L^2} =  \|Q\|_{L^2} = \|v_n \|_{L^2}. \eeq
   Together with the norm convergence of $\bm{D}_+ v_n$, we now have two more strong convergence
   \begin{align}  
   \bm{D}_+ v_n \rightarrow \bm{D}_+ v,&\qquad \text{in}\,\,L^2, \label{conv3} \\
   v_n \rightarrow v, &\qquad \text{in}\,\,L^2. \label{conv4}
   \end{align}
   From (\ref{conv1}), (\ref{conv2}), (\ref{conv3}) and (\ref{conv4}), we conclude $v_n\rightarrow v$ in $\dot{H}^1_m$
    through Lemma \ref{equilem3}, which also indicates the scaling parameter $\lambda = 1$, thus completing the proof.
   
   \end{proof}

\subsection{Proof of Theorem \ref{rigidfinselfdual}}

   First we need a Cauchy-Schwartz type estimate. This estimate is first introduced by Banica for (\ref{NLS}) in \cite{banica2004remarks}, and used in the proof of \cite{hmidi2005blowup} to derive a crucial ODE control of truncated virial identity. It still holds for general $H^1$ functions even without equivariant assumption.
   \begin{lem}[Cauchy-Schwartz type estimate]\label{cauchyschwartz}
     For $f \in H^1$ and all $R > 0$, we have have
     \beq \left| \int \partial_r \chi_R \text{Im} (\bar{f} \bm{D}_r f) dx \right| \le \left( 2E[f] \int |f|^2 |\partial_r \chi_R|^2 dx \right)^{\frac{1}{2}}.\label{cs-selfdual} \eeq
     where $E[f] = \frac{1}{2} \| \bm{D}_+ f \|_{L^2}^2$ is the energy for  (\ref{1CSSoriginal}) in the self-dual case.
   \end{lem}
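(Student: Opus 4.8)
The inequality to prove is a Cauchy–Schwarz estimate controlling the virial-type integral $\int \partial_r\chi_R\,\mathrm{Im}(\bar f\bm D_rf)\,dx$ by the energy $E[f]=\tfrac12\|\bm D_+f\|_{L^2}^2$ and a weight $\int|f|^2|\partial_r\chi_R|^2\,dx$. The natural approach is to work in polar coordinates, where in the self-dual case $E[f]=\pi\int_0^\infty|\bm D_+f|^2\,r\,dr$ with $\bm D_+f=(\partial_r-\tfrac{m+A_\theta[f]}{r})f$, while $\mathrm{Im}(\bar f\bm D_rf)=\mathrm{Im}(\bar f\partial_r f)$ because $A_r=0$ under the equivariant Coulomb gauge. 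So I would first observe that $\mathrm{Im}(\bar f\partial_r f)=\mathrm{Im}(\bar f\,\bm D_+f)$ as well, since the extra term $-\tfrac{m+A_\theta}{r}|f|^2$ is real and contributes nothing to the imaginary part. This is the key structural simplification that lets the self-dual energy appear.

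Once this is in hand, the estimate reduces to a pointwise-in-$r$ Cauchy–Schwarz: $\bigl|\int\partial_r\chi_R\,\mathrm{Im}(\bar f\,\bm D_+f)\,dx\bigr|\le\int|\partial_r\chi_R|\,|f|\,|\bm D_+f|\,dx$, and then applying Cauchy–Schwarz in $L^2(dx)$ to the pair $(|\partial_r\chi_R|\,|f|,\ |\bm D_+f|)$ gives $\bigl(\int|f|^2|\partial_r\chi_R|^2\,dx\bigr)^{1/2}\bigl(\int|\bm D_+f|^2\,dx\bigr)^{1/2}=\bigl(\int|f|^2|\partial_r\chi_R|^2\,dx\bigr)^{1/2}(2E[f])^{1/2}$, which is exactly \eqref{cs-selfdual}. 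The whole argument is two lines once the identity $\mathrm{Im}(\bar f\bm D_rf)=\mathrm{Im}(\bar f\bm D_+f)$ is noted. A remark worth including: the statement is asserted for general $f\in H^1$ without equivariance, in which case one does not literally have $A_r=0$, but one still has $\mathrm{Im}(\bar f\bm D_rf)=\mathrm{Im}(\bar f\bm D_+f)$ pointwise after accounting for the real $A_r|f|^2$ correction, or one simply uses $\mathrm{Re}$-part cancellation in the Bogomol'nyi decomposition $|\bm D_xf|^2=|\bm D_+f|^2+(\text{divergence})-F_{12}|f|^2$; for our purposes the equivariant self-dual case is all that is used.

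The only point requiring care — and the place I'd expect a referee to look — is justifying that the cross term dropped when passing from $\mathrm{Im}(\bar f\bm D_rf)$ to $\mathrm{Im}(\bar f\bm D_+f)$ really is real, i.e. that $\tfrac{m+A_\theta[f]}{r}$ is a real-valued function (which it is, being defined by the real integral formula for $A_\theta$) and that $f\cdot\overline f=|f|^2$ is real, so their product times $\partial_r\chi_R$ integrates against the imaginary part to zero. There is no genuine obstacle here; the lemma is essentially a repackaging of Banica's observation for \eqref{NLS} adapted to the gauged covariant derivative, and the self-duality is doing the work of making $E[f]=\tfrac12\|\bm D_+f\|_{L^2}^2$ a clean nonnegative quantity. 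I would therefore present the proof in three short steps: (i) reduce to polar coordinates and record $\mathrm{Im}(\bar f\bm D_rf)=\mathrm{Im}(\bar f\bm D_+f)$; (ii) apply the triangle inequality and Cauchy–Schwarz in $L^2(\mathbb R^2)$; (iii) identify $\int|\bm D_+f|^2\,dx=2E[f]$ to conclude.
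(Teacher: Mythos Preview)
Your argument is correct in the equivariant setting and genuinely different from the paper's proof. The paper follows Banica's phase-modulation trick: it expands $E[e^{i\alpha\chi_R}f]\ge 0$ as a quadratic in $\alpha$ (using the polar form of the energy with the $|\bm D_r f|^2$, $\frac{1}{r^2}|\bm D_\theta f|^2$, and $-\frac12|f|^4$ pieces) and reads off the desired inequality from the nonpositivity of the discriminant. Your route bypasses this by observing the pointwise identity $\mathrm{Im}(\bar u\,\bm D_r u)=\mathrm{Im}(\bar u\,\bm D_+ u)$ (the extra $-\frac{m+A_\theta}{r}|u|^2$ is real) and then applying Cauchy--Schwarz directly to the pair $(|\partial_r\chi_R|\,|u|,\,|\bm D_+ u|)$.

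What each approach buys: your argument is shorter and makes transparent \emph{why} the self-dual energy $E[f]=\tfrac12\|\bm D_+ f\|_{L^2}^2$ controls this virial term---it is literally one factor in a Cauchy--Schwarz. The paper's argument is more robust: it works for general (non-equivariant) $f\in H^1$, where your pointwise identity $\mathrm{Im}(\bar f\bm D_r f)=\mathrm{Im}(\bar f\bm D_+ f)$ fails (the full Bogomol'nyi operator $\bm D_+=e^{i\theta}(\bm D_r+\tfrac{i}{r}\bm D_\theta)$ picks up $\theta$-dependent cross terms), and it carries over verbatim to the non-self-dual case $g>1$ in \S\ref{3.3}, since all that is used is $E\ge 0$ on the relevant charge class. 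Your side remark about the general $H^1$ case is not quite right as stated, but as you note, only the equivariant case is needed downstream.
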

   
   \begin{proof} Notice that energy have another expression in polar coordinate (\ref{energy-r}) with $g=1$. So 
   with positivity of energy,  for any $\alpha \in \real$, \begin{align*}
  0 \le E[e^{i\alpha \chi_R(x)}f] &= \frac{1}{2} \int | \bm{D}_r (e^{i\alpha \chi_R}f) |^2 + \frac{1}{r^2} |\bm{D}_\theta (e^{i \alpha \chi_R} f)|^2 - \frac{1}{2} |e^{i \alpha \chi_R} f|^4 dx \\
  &=  \frac{1}{2} \int | \bm{D}_r f + i\alpha \partial_r \chi_R f |^2 + \frac{1}{r^2} |\bm{D}_\theta f |^2 - \frac{1}{2} |f|^4 dx \\
  &=  \frac{\alpha^2}{2} \int |f|^2 |\partial_r \chi_R|^2 |^2 dx - \alpha  \int \partial_r \chi_R \text{Im}(\bar{f} \bm{D}_r f) dx + E[f]
\end{align*}
The positivity of this quadratic form implies negative discriminant, which is exactly (\ref{cs-selfdual}).

\end{proof}

Now we are ready to prove Theorem \ref{rigidfinselfdual}.
\begin{proof}[Proof of Theorem \ref{rigidfinselfdual}] We divide the proof into two steps. 

\cu{Step 1. Normalized rigidity and charge concentration behavior.}

From Sobolev embedding and regularity $H^1_m$, we see that finite-time blowup condition 
\[\| \phi \|_{L^4_{t, x}([0, T) \times \real^2)} = \infty\]
implies the $L^\infty_t H^1_x$ blowup. Namely, there exists a sequence of time $t_n \nearrow T$ such that 
\beq \| \phi(t_n)\|_{H^1_m} \rightarrow \infty,  \qquad \mathrm{as} \,\, n \rightarrow \infty.\label{step11} \eeq
We set 
 \[ \rho_n = \frac{\| \phi^{(m)} \|_{\dot{H}^1_m}}{\|\phi(t_n, \cdot) \|_{\dot{H}^1_m}},\quad \text{and}\quad v_n = \rho_n \phi(t_n, \rho_n x).\] 
 Then from (\ref{step11}) and charge conservation, $\rho_n \rightarrow 0$ as $n \rightarrow \infty$. The sequence $\{ v_n \}$ then satisfies
 \[ \| v_n \|_{L^2} = \| Q\|_{L^2}, \quad \| v_n \|_{\dot{H}^1_m} = \| \phi^{(m)} \|_{\dot{H}^1_m}. \]
 Furthermore, by conservation of the energy, 
 \[ E[v_n] = \rho_n^2 E[\phi_0] \rightarrow 0,\qquad \text{as}\,\,n \rightarrow \infty.\]
Hence, $\{ v_n\}$ satisfies the assumption of Proposition \ref{rigidseqselfdual}, we have
\beq e^{-i\gamma} \rho_n \phi(t_n, \rho_n x) = e^{-i\gamma} v_n \rightarrow Q, \qquad \text{in} \,\, H^1\, \,\text{as}\,\, n \rightarrow \infty, \eeq
for some $\gamma \in [0, 2\pi)$.

This easily implies the charge concentration behavior
\beq |\phi(t_n, x)|^2 - \| Q \|_{L^2}^2 \delta_{x=0} \rightarrow 0,\qquad \text{in} \,\,\mathcal{D}'(\real^2), \label{chargecon}\eeq
where $\mathcal{D}'(\real^2)$ is the distribution on $\real^2$, and $\delta_{x=0}$ is the delta functional.

\cu{Step 2. Truncated virial estimate.}

Denote the truncated virial quantity for $\phi$ to be 
\[ V_R(t) = \int \chi_R(x) |\phi(t, x)|^2 dx, \]
where $\chi_R$ is the smooth truncation of $|x|^2$ as in Corollary \ref{trunvirialest}. Lemma~\ref{cauchyschwartz} together with the bound (\ref{decayprop}) we get, for $t \in [0, T),$
\begin{align} |\partial_t V_R(t) | &= 2\left| \int \partial_r \chi_R \text{Im} (\bar{\phi(t)} \bm{D}_r \phi(t)) dx \right| \le 2\left( 2E[\phi(t)] \int |\phi(t)|^2 |\partial_r \chi_R|^2 dx \right)^{\frac{1}{2}} \nonumber\\
&\le C E[\phi_0]^{\frac{1}{2}} \left( \int |\phi(t)|^2  \chi_R dx \right)^{\frac{1}{2}}  \lesssim_{\phi_0} (V_R(t))^{\frac{1}{2}}. \label{virialode} \end{align}
By integration we obtain, for every $t \in [0,T)$, 
\[ |(V_R(t))^{\frac{1}{2}} - (V_R(t_n))^{\frac{1}{2}}| \le C(\phi_0) |t_n - t|. \]
Now take $n \rightarrow \infty$, from (\ref{chargecon}), we get 
\[ |V_R(t)| \le C(\phi_0) (T-t)^2. \]
Noting that this bound is independent on $R$, by taking $R \rightarrow \infty$ we can see that virial quantity is also controlled by the same bound. Hence (\ref{coorp}) provides us with
\[  8t^2E[e^{i \frac{|x|^2}{4t}} \phi(0)] = \int_{\real^2} |x|^2 |\phi(t, x)|^2 \le C(\phi_0) (T-t)^2. \] 
Let $t \nearrow T$, we get
\[ E[e^{i \frac{|x|^2}{4T}}\phi_0] = 0. \]
Thus Proposition \ref{varcharselfdual} indicates that there exists $\gamma \in [0, 2\pi)$, $\lambda \in \real_+$, such that
\[ \phi_0 = e^{i\gamma} \lambda e^{-i\frac{|x|^2}{4T}} Q(\lambda r) e^{im\theta} = e^{i\gamma} PC_T [\tilde{\lambda} \phi^{(m)}(\tilde{\lambda} \cdot)](0, r), \] 
with $\tilde{\lambda} = \lambda T$. This conclude the proof of Theorem \ref{rigidfinselfdual}.
\end{proof}

\subsection{Non-self-dual case}\label{3.3}

In this subsection, we prove Theorem \ref{rigidfinnonselfdual} via same strategy as self-dual case.  

\begin{prop}\label{rigidseqnonselfdual}
For $m \ge 0$, $g > 1$, let $v_n \in H^1_m(\real^2)$, satisfying
\begin{align*} 
\| v_n \|_{L^2} = c_{m, g},  & \qquad \| v_n \|_{\dot{H}^1} = M \\
E[v_n] = \frac{1}{2}\|\bm{D}_+ v_n\|_{L^2}^2& - \frac{g - 1}{4} \| v_n \|_{L^4}^4  \rightarrow 0 \qquad \text{as} \,\,n \rightarrow \infty.
\end{align*}  
Then there exists $\gamma \in [0, 2\pi)$, a subsequence of $\{v_n\}$ (still denoted by $\{ v_n\}$) and $v \in H^1_m$ such that
\beq v_n \rightarrow v \quad \text{in}\, H^1,
\eeq
and $\psi(t, x) = e^{i\alpha t} v(x)$ for some  $\alpha \in \real \backslash\{ 0\}$ is a standing wave solution of (\ref{CSS}).
\end{prop}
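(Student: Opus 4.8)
The plan is to mirror the proof of Proposition \ref{rigidseqselfdual} almost verbatim, replacing the self-dual variational characterization (Proposition \ref{varcharselfdual}) by its non-self-dual counterpart (Proposition \ref{varcharnonselfdual}) and carrying along the extra quartic term in the energy. First I would use that $\{v_n\}$ is bounded in $H^1_m$ to extract a subsequence with $v_n \rightharpoonup v$ in $H^1$ for some $v \in H^1_m$; by the compact embedding $H^1_m(\real^2) \hookrightarrow L^4_m(\real^2)$ this upgrades to $v_n \to v$ in $L^4$, and weak lower semicontinuity of the $L^2$ norm gives $\|v\|_{L^2} \le c_{m,g}$.

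The next step is to show $v \neq 0$. By Lemma \ref{equilem1}, $\|\bm{D}_x v_n\|_{L^2}^2 \sim_{m,g} \|v_n\|_{\dot{H}^1_m}^2 = M^2$ with constants uniform in $n$ (since $\|v_n\|_{L^2} = c_{m,g}$ is fixed), so after a further subsequence $\|\bm{D}_x v_n\|_{L^2}^2 \to L$ with $L \gtrsim_{m,g} M^2 > 0$. From $E[v_n] = \tfrac12\|\bm{D}_x v_n\|_{L^2}^2 - \tfrac{g}{4}\|v_n\|_{L^4}^4 \to 0$ I then get $\|v\|_{L^4}^4 = \lim\|v_n\|_{L^4}^4 = \tfrac{2}{g}L > 0$, hence $v \neq 0$. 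By Lemma \ref{equilem2}, $\bm{D}_+ v_n \rightharpoonup \bm{D}_+ v$ in $L^2$, so $\|\bm{D}_+ v\|_{L^2}^2 \le \liminf\|\bm{D}_+ v_n\|_{L^2}^2$; combined with $\|v_n\|_{L^4}^4 \to \|v\|_{L^4}^4$ this yields
\[ E[v] = \tfrac12\|\bm{D}_+ v\|_{L^2}^2 - \tfrac{g-1}{4}\|v\|_{L^4}^4 \le \liminf\Big(\tfrac12\|\bm{D}_+ v_n\|_{L^2}^2 - \tfrac{g-1}{4}\|v_n\|_{L^4}^4\Big) = \lim E[v_n] = 0. \]
Since $v \neq 0$ and $\|v\|_{L^2} \le c_{m,g}$, Proposition \ref{varcharnonselfdual} gives $E[v] \ge 0$, hence $E[v] = 0$; the same proposition then forces $\|v\|_{L^2} = c_{m,g}$ and produces $\alpha \in \real\setminus\{0\}$ such that $\psi(t,x) = e^{i\alpha t}v(x)$ solves (\ref{CSS}) as a standing wave, which is the limiting object claimed.

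Finally I would upgrade to strong $H^1$ convergence via Lemma \ref{equilem3}. The identity $\|v\|_{L^2} = c_{m,g} = \lim\|v_n\|_{L^2}$ together with $v_n \rightharpoonup v$ in $L^2$ gives $v_n \to v$ in $L^2$. From $E[v_n] \to 0 = E[v]$ and $\|v_n\|_{L^4}^4 \to \|v\|_{L^4}^4$,
\[ \tfrac12\|\bm{D}_+ v_n\|_{L^2}^2 = E[v_n] + \tfrac{g-1}{4}\|v_n\|_{L^4}^4 \longrightarrow E[v] + \tfrac{g-1}{4}\|v\|_{L^4}^4 = \tfrac12\|\bm{D}_+ v\|_{L^2}^2, \]
so the weak convergence $\bm{D}_+ v_n \rightharpoonup \bm{D}_+ v$ becomes strong in $L^2$. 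With $v_n \rightharpoonup v$ in $H^1$, $v_n \to v$ in $L^2$, $\bm{D}_+ v_n \to \bm{D}_+ v$ in $L^2$ and $v_n \to v$ in $L^4$ all in hand, Lemma \ref{equilem3} gives $v_n \to v$ in $H^1$.

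The argument is structurally identical to the self-dual case, so I do not expect a single hard step; the only new bookkeeping is (i) the extra term $-\tfrac{g-1}{4}\|v_n\|_{L^4}^4$ in $E[v_n]$, harmless since the $L^4$ norms converge and the $\bm{D}_x$-form and $\bm{D}_+$-form of the energy differ by exactly this term, and (ii) the non-vanishing of $v$, which—unlike the self-dual case, where it follows directly from $\|v_n\|_{L^4}^4 = -4E[v_n] + 2\|\bm{D}_x v_n\|_{L^2}^2$—needs the quantitative lower bound $\|\bm{D}_x v_n\|_{L^2}^2 \gtrsim_{m,g} M^2$ from Lemma \ref{equilem1}. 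The real limitation, inherited from Proposition \ref{varcharnonselfdual} and the reason this statement is weaker than Proposition \ref{rigidseqselfdual}, is that without uniqueness of the non-self-dual ground state one cannot identify $v$ or extract a scaling parameter, so the conclusion can only assert convergence to \emph{some} standing wave.
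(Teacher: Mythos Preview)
Your proposal is correct and follows essentially the same approach as the paper's proof: extract a weak $H^1$ limit, upgrade to $L^4$ by compact embedding, use Lemma \ref{equilem1} to get $v\neq 0$, combine Lemma \ref{equilem2} with Proposition \ref{varcharnonselfdual} to pin down $E[v]=0$ and $\|v\|_{L^2}=c_{m,g}$, then convert norm convergence of $\|v_n\|_{L^2}$ and $\|\bm{D}_+ v_n\|_{L^2}$ into strong convergence and finish with Lemma \ref{equilem3}. Your write-up is in fact more explicit than the paper's on the non-vanishing step and on how the norm convergences are obtained.
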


\begin{proof}
    By uniform boundedness in $H^1$-norm and compact embedding, we know there exists $v \in H^1_m$ s.t. 
    \[ v_n \rightharpoonup v\quad \text{in} \,H^1,\qquad v_n \rightarrow v\quad \text{in} \, L^4. \]
    Again, by lower bound of $\| \bm{D}_x v_n\|_{L^2}$ from Lemma \ref{equilem1} and $L^4$ convergence, we know $v$ is nontrivial. And from Lemma \ref{equilem2}, 
    \beq E[v] = \frac{1}{2}\|\bm{D}_+ v\|_{L^2}^2 - \frac{g - 1}{4} \| v\|_{L^4}^4  \le \liminf_{n \rightarrow \infty} \frac{1}{2}\|\bm{D}_+ v_n\|_{L^2}^2 - \frac{g - 1}{4} \| v_n \|_{L^4}^4  = 0 \label{nsdd1} \eeq
    Note that 
    \beq \| v\|_{L^2} \le \liminf_{n\rightarrow \infty} \| v_n \|_{L^2} = c_{m, g}, \label{nsdd2} \eeq
    And Proposition \ref{varcharnonselfdual} forces $E[v] \ge 0$. So  $E[v] = 0$ and hence $v$ is a standing wave solution to (\ref{CSS}) with critical charge $c_{m, g}$. The norm convergence of (\ref{nsdd1}) and (\ref{nsdd2}) implies that
    \[ v_n \rightarrow v  \quad \text{in} \, L^2, \qquad \bm{D}_+ v_n \rightarrow \bm{D}_+ v \quad \text{in} \, L^2. \]
    Now Lemma \ref{equilem3} ensures the $H^1$ convergence and completes the proof. 
\end{proof}

Since the positivity of energy holds for $\| f \|_{L^2} \le c_{m, g}$, we can prove the counterpart of Cauchy-Schwartz type estimate for non-self-dual case as Lemma \ref{cauchyschwartz} in the same way. We state this lemma and omit its proof.
   \begin{lem}
     Fix $m \ge 0$ and $g > 1$. For $f \in H^1$, $\| f \|_{L^2} \le c_{m, g}$ and all $R > 0$, we have
     \beq \left| \int \partial_r \chi_R \text{Im} (\bar{f} \bm{D}_r f) dx \right| \le \left( 2E[f] \int |f|^2 |\partial_r \chi_R|^2 dx \right)^{\frac{1}{2}}.\label{cs} \eeq
     where $E[f] = \frac{1}{2} \| \bm{D}_+ u \|_{L^2}^2 - \frac{g-1}{4}\| f \|_{L^4}^4$ is the energy for (\ref{1CSSoriginal}) in the non-self-dual case.
   \end{lem}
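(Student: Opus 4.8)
The plan is to run the proof of Lemma~\ref{cauchyschwartz} essentially verbatim; the only point requiring attention is that in the non-self-dual regime positivity of the energy is available only under the charge constraint $\|\cdot\|_{L^2}\le c_{m,g}$ (Proposition~\ref{varcharnonselfdual}), so I must check that this constraint is not broken by the gauge transformation used in that argument. Fix $f$ with $\|f\|_{L^2}\le c_{m,g}$ and $R>0$, and for $\alpha\in\real$ set $f_\alpha := e^{i\alpha\chi_R(x)}f$. Since $\chi_R$ is a bounded, smooth, compactly supported radial function, $f_\alpha\in H^1$ (and is $m$-equivariant whenever $f$ is, $\chi_R$ being radial); crucially, multiplication by the unimodular factor $e^{i\alpha\chi_R}$ preserves the charge, $\|f_\alpha\|_{L^2}=\|f\|_{L^2}\le c_{m,g}$. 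Hence Proposition~\ref{varcharnonselfdual} applies to each $f_\alpha$ and yields $E[f_\alpha]\ge 0$ for all $\alpha\in\real$.

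Next I would expand $E[f_\alpha]$ using the polar form~(\ref{energy-r}) of the energy. Because $\chi_R$ is radial, $\partial_\theta\chi_R=0$, so the angular covariant derivative is untouched, $\bm{D}_\theta f_\alpha=e^{i\alpha\chi_R}\bm{D}_\theta f$, while $\bm{D}_r f_\alpha=e^{i\alpha\chi_R}\big(\bm{D}_r f+i\alpha\,\partial_r\chi_R\,f\big)$; moreover $|f_\alpha|=|f|$ leaves the quartic term $\tfrac{g}{4}|\cdot|^4$ and the angular term $\tfrac{1}{2r^2}|\bm{D}_\theta\cdot|^2$ unchanged. Expanding $|\bm{D}_r f+i\alpha\,\partial_r\chi_R\,f|^2$ then gives the quadratic in $\alpha$
\[
E[f_\alpha] = E[f] - \alpha\int \partial_r\chi_R\,\mathrm{Im}(\bar f\,\bm{D}_r f)\,dx + \frac{\alpha^2}{2}\int |f|^2\,|\partial_r\chi_R|^2\,dx .
\]
The key structural point is that the focusing quartic term, which in the non-self-dual case sits inside $E[f]$ with an unfavourable sign, is independent of $\alpha$ and therefore does not enter the coefficients of $\alpha$ and $\alpha^2$; this is precisely why the self-dual computation transfers without change.

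Finally, since $E[f_\alpha]\ge 0$ for every real $\alpha$, the discriminant of this quadratic is non-positive, i.e.
\[
\left(\int \partial_r\chi_R\,\mathrm{Im}(\bar f\,\bm{D}_r f)\,dx\right)^{2} \;\le\; 2\,E[f]\int |f|^2\,|\partial_r\chi_R|^2\,dx ,
\]
which is exactly~(\ref{cs}). I do not anticipate any real obstacle: the entire content is the observation that $e^{i\alpha\chi_R}$ is unimodular (hence charge-preserving, keeping us in the range of Proposition~\ref{varcharnonselfdual}) and radial (hence perturbing only the radial covariant derivative). The only items deserving a line of care are a density/regularity remark ensuring $f_\alpha$ lies in the class to which Proposition~\ref{varcharnonselfdual} applies, and --- if one wants the stated generality $f\in H^1$ rather than $f\in H^1_m$ --- reading $\bm{D}_r,\bm{D}_\theta$ and $E$ through the equivariant formulas so that~(\ref{energy-r}) and the conditional positivity remain meaningful.
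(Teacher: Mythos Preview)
Your proposal is correct and is exactly the argument the paper has in mind: the authors explicitly say the lemma is proved ``in the same way'' as Lemma~\ref{cauchyschwartz}, using that the unimodular radial phase $e^{i\alpha\chi_R}$ preserves the charge so that Proposition~\ref{varcharnonselfdual} still guarantees $E[f_\alpha]\ge 0$. Your observation that the $g$-dependent quartic term is $\alpha$-independent (since $|f_\alpha|=|f|$, which also ensures $A_\theta[f_\alpha]=A_\theta[f]$) is precisely why the self-dual computation carries over unchanged.
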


Then we finish the proof of Theorem \ref{rigidfinnonselfdual} by the   same argument as Theorem \ref{rigidfinselfdual}, using charge concentration argument and truncated virial estimate.

\section{Infinite time blow up}\label{sec4}

In this section, we prove Theorem \ref{rigidinfselfdual}. First in \S \ref{4.1}, we introduce several tools from harmonic analysis, including the concentration compactness Proposition \ref{almostpms}. Then we reduce the whole proof to a frequency decay estimate Proposition \ref{freqdecayest} in \S \ref{4.2} and prove that estimate in \S \ref{4.3} and \S \ref{4.4}. 

\subsection{Preliminary on Harmonic Analysis}\label{4.1}

\subsubsection{Basic harmonic analysis}

We introduce the Littlewood-Paley multipliers in the usual way. In particular, let  $\varphi \in C^\infty_{0,rad} (\{|x| \le 2\})$ and 
  \[ \varphi(x) = \varphi(|x|) =
  \left\{ \begin{array}{rl}
  1, &  |x| < 1,\\
  0,& |x| \ge 2,
  \end{array}
  \right. \]
  and its scaling for $R > 0$
  \[ \varphi_{\le R} := \varphi (R^{-1} \cdot) ,\quad \varphi_{> R} := 1- \varphi_{\le R}.\] 
  Then for each $N \in 2^\intg$, define 
  \[ \mathcal{F}(P_{\le N} f) (\xi) := \varphi_{\le N}(|\xi|)\hat{f}(\xi),\quad P_{> N} = 1-P_{\le N}, \quad P_N = P_{\le N} - P_{\le \frac{N}{2}} \]
  and the fattened Littlewood-Paley operators
\[ \tilde{P}_N := P_{N/2} + P_N + P_{2N}. \]
The basic estimate in Littlewood-Paley theory is the following Bernstein estimate.
\begin{lem}[Bernstein estimates]
For $1 \le p \le q \le \infty$,
\begin{align*}
  \| |\nabla|^{\pm s} P_N f \|_{L^p(\real^d)} \sim & N^{\pm s} \| P_N f \|_{L^p(\real^d)} \\
  \| P_{\le N} f \|_{L^p(\real^d)} \lesssim & N^{\frac{d}{p} - \frac{d}{q}} \| P_{\le N} f \|_{L^p(\real^d)} \\
  \| P_N f \|_{L^p(\real^d)} \lesssim & N^{\frac{d}{p} - \frac{d}{q}} \| P_N f \|_{L^p(\real^d)}
\end{align*}
\end{lem}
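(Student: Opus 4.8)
The plan is the standard one: realize each Littlewood--Paley multiplier as convolution against a rescaled copy of a single fixed Schwartz kernel and then apply Young's convolution inequality, so that the proof reduces to bookkeeping of scalings and frequency supports. Concretely, set $m(\xi):=\varphi_{\le 1}(|\xi|)$ and $n(\xi):=\varphi_{\le 1}(|\xi|)-\varphi_{\le 1/2}(|\xi|)$, which lie in $C^\infty_0(\real^d)$ with $\mathrm{supp}\,m\subset\{|\xi|\le 2\}$ and $\mathrm{supp}\,n\subset\{1/2\le|\xi|\le 2\}$. By the scaling built into the definition of the multipliers, $P_{\le N}$ is convolution with $N^d K(N\,\cdot)$, where $K\in\mathcal S(\real^d)$ is the inverse Fourier transform of $m$, and $P_N$ is convolution with $N^d L(N\,\cdot)$, where $L\in\mathcal S(\real^d)$ is that of $n$; for every $1\le r\le\infty$ one has $\|N^d K(N\,\cdot)\|_{L^r}=N^{d(1-1/r)}\|K\|_{L^r}$, and likewise for $L$.

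For the gain-of-integrability estimates, given $1\le p\le q\le\infty$ choose $r\in[1,\infty]$ with $1/r=1+1/q-1/p$ (possible precisely because $p\le q$), so that $d(1-1/r)=d/p-d/q$. Young's inequality $\|f*g\|_{L^q}\le\|f\|_{L^p}\|g\|_{L^r}$ applied with $g=N^d K(N\,\cdot)$, resp. $g=N^d L(N\,\cdot)$, gives $\|P_{\le N}h\|_{L^q}\lesssim N^{d/p-d/q}\|h\|_{L^p}$ and $\|P_N h\|_{L^q}\lesssim N^{d/p-d/q}\|h\|_{L^p}$ for every $h$, which is the asserted Bernstein inequality (the projections appearing on both sides in the statement merely record that the input is already frequency-localized), with implied constants $\|K\|_{L^r}$ and $\|L\|_{L^r}$.

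For the equivalence $\||\nabla|^{\pm s}P_N f\|_{L^p}\sim N^{\pm s}\|P_N f\|_{L^p}$, note that the multiplier of $|\nabla|^{\pm s}P_N$ is $|\xi|^{\pm s}n(\xi/N)$, supported where $|\xi|\sim N$. Picking $\chi\in C^\infty_0(\{1/4\le|\eta|\le 4\})$ with $\chi\equiv 1$ on $\mathrm{supp}\,n$ and setting $h_\pm(\eta):=|\eta|^{\pm s}\chi(\eta)$, one has $|\xi|^{\pm s}n(\xi/N)=N^{\pm s}(h_\pm n)(\xi/N)$, so $|\nabla|^{\pm s}P_N$ is convolution with $N^{\pm s}$ times a rescaled fixed Schwartz kernel, and Young's inequality with $r=1$ gives the bound $\lesssim N^{\pm s}\|P_N f\|_{L^p}$ after replacing $f$ by $P_N f$. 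For the reverse bound one uses the fattened projector $\tilde P_N$, whose multiplier is identically $1$ on the frequency support of $P_N$: then $|\nabla|^{\mp s}\tilde P_N$, again convolution with $N^{\mp s}$ times a rescaled Schwartz kernel, recovers $P_N f$ from $|\nabla|^{\pm s}P_N f$, whence $\|P_N f\|_{L^p}\lesssim N^{\mp s}\||\nabla|^{\pm s}P_N f\|_{L^p}$; combining the two directions gives the equivalence. There is no genuine obstacle in this argument, as the estimate is classical; the only point that needs mild care is this standard device of shifting $|\nabla|^{\pm s}$ on and off a frequency-localized function through a fattened projection, which requires checking (after a harmless enlargement of $\tilde P_N$ if necessary) that its multiplier equals $1$ on the closed frequency support of $P_N$.
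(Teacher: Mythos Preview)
Your proof is correct and is precisely the standard textbook argument; the paper itself states this lemma without proof, treating it as classical. There is nothing to compare, and your write-up would serve perfectly well as the omitted justification.
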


While it's true that spatial cutoffs do not commute with Littlewood-Paley operators, the commutator isn't ``too bad", so that a weaker form of almost orthogonality still holds.

\begin{lem}[Mismatch estimates in physical space, \cite{killip2009characterization}]
Let $R, N > 0$. Then 
\begin{align*}
\| \varphi_{> R} P_{\le N} \varphi_{\le \frac{R}{2}} f \|_{L^p} \lesssim_m & N^{-m} R^{-m} \|f\|_{L^p}\\
  \| \varphi_{> R} \nabla P_{\le N} \varphi_{\le \frac{R}{2}} f \|_{L^p} \lesssim_m & N^{1-m} R^{-m} \|f\|_{L^p}   
\end{align*}
for any $1 \le p \le \infty$ and $m \ge 0$.
\end{lem}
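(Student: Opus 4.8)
The plan is to prove both bounds by reducing them to rapid decay of the convolution kernel of $P_{\le N}$ away from the origin, and then exploiting the separation of the supports of $\varphi_{>R}$ and $\varphi_{\le R/2}$. Write $P_{\le N}g = K_N * g$ where $K_N(x) = N^d \check{\varphi}(Nx)$ and $\check{\varphi}$ is the inverse Fourier transform of the radial bump $\varphi_{\le 1}$. Since $\varphi \in C^\infty_0$, $\check{\varphi}$ is Schwartz, so for every $M \ge 0$ we have the kernel bound $|K_N(x)| \lesssim_M N^d (1 + N|x|)^{-M}$, and similarly $|\nabla K_N(x)| \lesssim_M N^{d+1}(1+N|x|)^{-M}$ for the second estimate (differentiation pulls out one factor of $N$ and keeps the Schwartz decay). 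These are the only analytic inputs needed.

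First I would handle the first inequality. For $x$ in the support of $\varphi_{>R}$ (so $|x| \ge R$) and $y$ in the support of $\varphi_{\le R/2}$ (so $|y| \le R$), we have $|x - y| \ge |x| - |y| \ge |x|/2 \gtrsim R$ when $|x| \ge R$; more precisely, on the overlap region one always has $|x-y|\gtrsim \max(R, |x|)$. Therefore
\[
\bigl| \bigl(\varphi_{>R} P_{\le N} \varphi_{\le R/2} f\bigr)(x) \bigr|
\lesssim_M \varphi_{>R}(x) \int N^d \bigl(1 + N|x-y|\bigr)^{-M} \varphi_{\le R/2}(y)\, |f(y)|\, dy,
\]
and then I would bound the kernel using $(1+N|x-y|)^{-M} \le (1+N|x-y|)^{-m}(NR)^{-(M-m)}$ type splitting (choosing $M$ large relative to the target exponent $m$), pull out the gain $N^{-m}R^{-m}$, and estimate the remaining integral operator, whose kernel $N^d(1+N|x-y|)^{-(M')}$ with $M'$ still large has $L^1$ norm in $y$ (and in $x$) bounded uniformly in $N$; Schur's test / Young's inequality then gives the $L^p \to L^p$ bound for all $1 \le p \le \infty$. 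The second inequality is identical, with $K_N$ replaced by $\nabla K_N$, which costs one extra power of $N$ and hence produces $N^{1-m}R^{-m}$.

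The main (really the only) obstacle is purely bookkeeping: choosing the decay exponent $M$ in the Schwartz bound large enough that, after extracting the factor $(NR)^{-m}$ from the region $|x-y| \gtrsim R$, what remains is still an integrable kernel uniformly in $N$; this is routine since $\check\varphi$ decays faster than any polynomial. One should also note the estimate is vacuous unless we interpret it with the implicit constant absorbing the case $NR \lesssim 1$ — but in fact the bound as stated holds for all $N, R > 0$ because for $NR \lesssim 1$ the left-hand side is controlled by the trivial bound $\|P_{\le N}\|_{L^p \to L^p} \lesssim 1$ times $\|f\|_{L^p}$, while $N^{-m}R^{-m} \gtrsim 1$ there. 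No spectral or equivariant structure enters; this is a general harmonic analysis lemma quoted from \cite{killip2009characterization}.
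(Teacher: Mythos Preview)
The paper does not give its own proof of this lemma; it is quoted directly from \cite{killip2009characterization}. Your approach---write $P_{\le N}$ as convolution with the Schwartz kernel $K_N = N^d\check\varphi(N\cdot)$, exploit spatial separation of the two cutoffs to extract $(NR)^{-m}$, and close with Young/Schur---is exactly the standard argument from that reference, so in spirit you are reproducing the intended proof.

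There is, however, one genuine slip in your separation step. With the paper's conventions, $\varphi_{\le R/2}(y)=\varphi(2y/R)$ is supported on $\{|y|<R\}$, while $\varphi_{>R}(x)=1-\varphi(x/R)$ is supported on $\{|x|>R\}$. These supports touch: taking $|x|=R+\epsilon$ and $|y|=R-\epsilon$ in the same direction gives $|x-y|=2\epsilon$, so your claim ``$|x-y|\ge |x|-|y|\ge |x|/2 \gtrsim R$'' and the sharper version ``$|x-y|\gtrsim \max(R,|x|)$'' are both false as written. The fix is not hard but is not the one you describe: because $\varphi\in C^\infty$ equals $1$ on $\{|x|\le 1\}$ and $0$ on $\{|x|\ge 2\}$, both cutoffs vanish to \emph{infinite order} at the edge of their supports, i.e.\ $|\varphi_{>R}(x)|\lesssim_k ((|x|-R)/R)^k$ near $|x|=R$. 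Combining this with $|x-y|\ge |x|-|y|\ge |x|-R$ on the relevant supports, the Schur integral $\varphi_{>R}(x)\int N^d(1+N|x-y|)^{-M}\varphi_{\le R/2}(y)\,dy$ is bounded by $C_k(\delta/R)^k(1+N\delta)^{d-M}$ with $\delta=|x|-R$, and optimizing over $\delta$ yields $(NR)^{-m}$ for suitable $k,M$. Once this is inserted, the rest of your argument (including the $\nabla$ variant and the trivial regime $NR\lesssim 1$) goes through unchanged.
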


Similar estimates hold when the roles of frequency and physical spaces are interchanged.

\begin{lem}[Mismatch estimate in frequency space, \cite{killip2009characterization}]\label{mismatch2}
For $R > 0$ and $N, M > 0$ such that $\max\{ N, M \} \ge 4 \min\{ N, M\},$
\begin{align*}
  \| P_N \varphi_{\le R} P_M f \|_{L^2} \lesssim_m & \max \{ N, M \}^{-m} R^{-m} \| f\|_{L^2} \\
  \| P_N \varphi_{\le R} \nabla P_M f \|_{L^2} \lesssim_m & M \max \{ N, M \}^{-m} R^{-m} \| f\|_{L^2}
\end{align*}
for any $ m \ge 0$. The same estimates hold if we replace $\varphi_{\le R}$ by $\varphi_{> R}$, or $P_N$ by $P_{< N}$ when $M \ge 4N $.
\end{lem}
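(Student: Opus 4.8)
The plan is to prove this by passing to the Fourier side and applying Schur's test, exploiting that under the hypothesis $\max\{N,M\}\ge 4\min\{N,M\}$ the Littlewood--Paley symbols involved have frequency supports separated by a distance $\gtrsim\max\{N,M\}$, whereas the cutoff $\varphi_{\le R}$ only smears frequencies by $\sim R^{-1}$; so when $R\max\{N,M\}\gg 1$ the operator is tiny.

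First I would reduce to a single representative case. By symmetry it is enough to treat $\max\{N,M\}=N\ge 4M$ (the case $M\ge 4N$ is the same after interchanging the two frequencies, and there the separation persists if $P_N$ is weakened to $P_{<N}$). Since $\|P_N\varphi_{\le R}P_M\|_{L^2\to L^2}\lesssim 1$ trivially, while $\max\{N,M\}^{-m}R^{-m}\gtrsim 1$ whenever $R\max\{N,M\}\lesssim 1$, one may assume $RN\ge 1$. Finally, since the symbols of $P_N$ and $P_M$ have supports meeting in a set of measure zero once $N\ge 4M$, we have $P_N\cdot 1\cdot P_M=0$, hence $P_N\varphi_{>R}P_M=-P_N\varphi_{\le R}P_M$, so the $\varphi_{>R}$ (and $P_{<N}$) variants follow from the $\varphi_{\le R}$ one.

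For the core estimate I would write $P_N\varphi_{\le R}P_M$, after conjugating by the Fourier transform, as the integral operator with kernel $K(\xi,\eta)=c\,\chi_N(\xi)\widehat{\varphi_{\le R}}(\xi-\eta)\chi_M(\eta)$, where $\chi_N,\chi_M$ are the symbols of $P_N,P_M$ --- multiplication by $\varphi_{\le R}$ becoming convolution against $\widehat{\varphi_{\le R}}$. On $\operatorname{supp}K$ one has $|\xi|\sim N$, $|\eta|\sim M$, hence $|\xi-\eta|\gtrsim N$. Because $\varphi\in C^\infty_{0,\mathrm{rad}}$ its Fourier transform is Schwartz, so $|\widehat{\varphi_{\le R}}(\zeta)|=R^{d}|\widehat\varphi(R\zeta)|\lesssim_k R^{d}(1+R|\zeta|)^{-k}$ for every $k\ge 0$; combined with $R|\xi-\eta|\gtrsim RN\ge 1$ this gives $|K(\xi,\eta)|\lesssim_k R^{d}(RN)^{-k}$ on the support. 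Integrating over the supports, which have $\xi$- and $\eta$-volume $\lesssim\max\{N,M\}^{d}=N^{d}$, bounds both Schur quantities $\sup_\xi\int|K|\,d\eta$ and $\sup_\eta\int|K|\,d\xi$ by $N^{d}R^{d}(RN)^{-k}=(RN)^{d-k}$; choosing $k=d+2m$ and applying Schur's test yields $\|P_N\varphi_{\le R}P_M\|_{L^2\to L^2}\lesssim_m(RN)^{-2m}\le(RN)^{-m}=\max\{N,M\}^{-m}R^{-m}$. For the gradient version, replacing $P_M$ by $\nabla P_M$ only multiplies the symbol by $i\eta$, contributing $|\eta|\le 2M$ on the support and hence the stated extra factor $M$.

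I do not expect a serious obstacle: this is a routine Schur's test once the frequency separation is identified. The only bookkeeping point is to absorb the polynomial volume factors $\sim\max\{N,M\}^{d}$ using the Schwartz decay, which is harmless because one is free to take $k$ as large as needed and $RN\ge 1$ in the only nontrivial regime. If anything deserves slight care, it is that the identity $P_N\cdot 1\cdot P_M=0$ relies on the non-strict inequality $N\ge 4M$ (the symbol supports meet in a measure-zero set, not necessarily the empty set), and that this same reasoning must be checked to cover the $P_{<N}$-in-place-of-$P_N$ replacement asserted in the statement.
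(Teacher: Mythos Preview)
The paper does not give its own proof of this lemma; it is simply quoted from \cite{killip2009characterization}. Your Schur's-test argument on the Fourier side is the standard route and is correct.

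The one point that needs a touch more care than you indicate is the borderline case $\max\{N,M\}=4\min\{N,M\}$: your assertion that $|\xi-\eta|\gtrsim\max\{N,M\}$ on $\operatorname{supp}K$ fails there, since with $N=4M$ the annuli $\{N/2\le|\xi|\le 2N\}$ and $\{M/2\le|\eta|\le 2M\}$ touch at radius $N/2$, and nearly parallel $\xi,\eta$ on either side of that sphere give $|\xi-\eta|$ arbitrarily small on a set of positive measure (your remark about measure-zero contact pertains to the product $\chi_N\chi_M$, which is what makes $P_NP_M=0$, not to the frequency separation). The fix is immediate once noticed: since $\varphi\in C^\infty$ and $\varphi\equiv 1$ on $B_1$, all derivatives of $\varphi-1$ vanish on $\partial B_1$, so $\chi_N$ vanishes to infinite order at $|\xi|=N/2$; inserting $|\chi_N(\xi)|\lesssim_j\big((|\xi|-N/2)/N\big)^j$ near the boundary into the Schur integral recovers the arbitrary polynomial gain $(R\max\{N,M\})^{-m}$. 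With this adjustment your argument goes through unchanged, including the reductions to $RN\ge 1$, to $\varphi_{\le R}$ via $P_N\cdot 1\cdot P_M=0$, and the extra factor $M$ for the gradient version.
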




\subsubsection{Strichartz estimates}

We present the classical linear Strichartz estimates. See for example \cite{cazenave2003semilinear}.

\begin{lem}[Strichartz estimates]
  Let $I$ be a time interval with $0 \in I$ and $u(0) = u_0 \in  L^2$ and $F \in L^{\frac{4}{3}}_{t}L^{\frac{4}{3}}_{x}(I \times \real^2)$.  Then the strong solution $u$ to the linear Schr\"odinger equation
  \[ u(t) := e^{it\Delta} u_0 - i\int^t_0 e^{i(t-t')\Delta} F(t') dt', \qquad t \in I,\]
  satisfies the estimate
  \[ \| u \|_{L^\infty_t L^2_x (I \times \real^2)} + \| u \|_{L^4_{t, x} (I \times \real^2)}  \lesssim \| u_0 \|_{L^2_x} + \| F \|_{L^{\frac{4}{3}}_{t, x} (I \times \real^2)} .\]
\end{lem}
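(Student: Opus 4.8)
The plan is to deduce the CSS-flavored statement from the standard dispersive theory for the free Schr\"odinger group $e^{it\Delta}$ on $\real^2$, since the right-hand side involves only $u_0$ and the inhomogeneous term $F$. First I would recall the dispersive bound $\|e^{it\Delta}f\|_{L^\infty_x} \lesssim |t|^{-1}\|f\|_{L^1_x}$, which together with mass conservation $\|e^{it\Delta}f\|_{L^2_x}=\|f\|_{L^2_x}$ interpolates to the full family of decay estimates. In two dimensions the $L^4_{t,x}$ pair is the classical Strichartz-admissible endpoint: $(q,r)=(4,4)$ satisfies $\tfrac{2}{q}+\tfrac{2}{r}=1$ with $r<\infty$, so the homogeneous estimate $\|e^{it\Delta}u_0\|_{L^4_{t,x}(I\times\real^2)} \lesssim \|u_0\|_{L^2_x}$ follows from the $TT^*$ argument applied to the operator $T: u_0 \mapsto e^{it\Delta}u_0$, using the Hardy--Littlewood--Sobolev inequality to handle the time convolution against $|t|^{-1}$. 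The $L^\infty_tL^2_x$ bound for the homogeneous part is immediate from mass conservation.

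Next I would treat the inhomogeneous term $\int_0^t e^{i(t-t')\Delta}F(t')\,dt'$. The dual exponent of $4$ is $\tfrac{4}{3}$, so $(\tfrac{4}{3},\tfrac{4}{3})$ is the dual admissible pair, and the inhomogeneous Strichartz estimate — in the form of the Christ--Kiselev lemma applied to the retarded operator, or directly via the Keel--Tao machinery — gives $\|\int_0^t e^{i(t-t')\Delta}F(t')\,dt'\|_{L^4_{t,x}(I\times\real^2)} \lesssim \|F\|_{L^{4/3}_{t,x}(I\times\real^2)}$, together with the corresponding $L^\infty_tL^2_x$ bound. Adding the homogeneous and inhomogeneous contributions via the triangle inequality yields exactly the claimed estimate for the strong solution $u(t)=e^{it\Delta}u_0 - i\int_0^t e^{i(t-t')\Delta}F(t')\,dt'$. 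Since all of this is classical, the cleanest route in the paper is simply to cite \cite{cazenave2003semilinear} for the statement and the $TT^*$/Christ--Kiselev derivation, rather than reproducing it.

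There is essentially no obstacle here, since the lemma is a verbatim restatement of textbook linear Strichartz theory for $\real^2$ and the proof is a citation; the only point requiring minor care is that the $L^4_{t,x}$ endpoint in $d=2$ is \emph{non-endpoint} in the Keel--Tao sense (it is the diagonal admissible pair, not the forbidden one), so the Christ--Kiselev lemma applies without difficulty and no loss is incurred. If a self-contained argument were desired, the main technical content would be the Hardy--Littlewood--Sobolev step $\big\|\int |t-t'|^{-1/2}g(t')\,dt'\big\|_{L^4_t} \lesssim \|g\|_{L^{4/3}_t}$ that powers the $TT^*$ computation after interpolating the dispersive decay to the exponent matching the $L^4_x$ spatial norm.
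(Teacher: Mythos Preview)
Your proposal is correct and matches the paper's treatment exactly: the lemma is the standard linear Strichartz estimate in $\real^2$, and the paper simply cites \cite{cazenave2003semilinear} without reproducing the $TT^*$/dispersive argument. Your sketch of the underlying proof (dispersive decay, $TT^*$, Hardy--Littlewood--Sobolev, Christ--Kiselev for the Duhamel term) is the standard derivation behind that citation.
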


The following weighted Strichartz estimate exploits heavily the equivariance in order to obtain spatial decay, which is originated from the radial case \cite{killip2007cubic}.

\begin{lem}[Weighted Strichartz estimates]\label{weightedstrichartz}
  For $m \ge 0$, let $I$ an interval, $t_0 \in I$, and let $F: I \times \real^2 \rightarrow \cpx$ be $m$-equivariant in space. Then
  \[ \left\| \int_{t_0}^t e^{i(t - t') \Delta} F(t') dt' \right\|_{L^2_x} \lesssim \left\| |x|^{-\frac{1}{2}} F \right\|_{L^{\frac{4}{3}}_t L^1_x}.\] 
\end{lem}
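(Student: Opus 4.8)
The plan is to reduce the estimate, by duality and the Christ--Kiselev lemma, to a dual homogeneous estimate for the free flow, and then to exploit the $m$-equivariance to turn that dual estimate into a one-dimensional Strichartz bound. First, since the free propagator commutes with the $m$-equivariant projection, it suffices to test the claim against $m$-equivariant $g\in L^2_x$; by unitarity of $e^{it\Delta}$ the retarded estimate reduces to the non-retarded one
\[ \Big\| \int_I e^{-it'\Delta}F(t')\,dt' \Big\|_{L^2_x} \lesssim \big\| |x|^{-1/2}F \big\|_{L^{4/3}_tL^1_x}, \]
and the retarded version then follows from the Christ--Kiselev lemma, legitimately because the output time-exponent $\infty$ strictly exceeds the input time-exponent $4/3$. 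Pairing the left-hand side with $g$, writing $F=(|x|^{-1/2}F)\cdot(|x|^{1/2})$, and applying H\"older in $x$ and then in $t$, one sees that this in turn follows from the dual estimate
\[ \big\| |x|^{1/2}\,e^{it\Delta}g \big\|_{L^4_tL^\infty_x(\real\times\real^2)} \lesssim \|g\|_{L^2_x}, \qquad g\in L^2_m. \]

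To prove the dual estimate I would use the equivariant reduction. Writing $g=e^{im\theta}g_0(r)$, we have $e^{it\Delta}g=e^{im\theta}(e^{it\Delta_m}g_0)(r)$, and the substitution $v(t,r):=r^{1/2}(e^{it\Delta_m}g_0)(r)$ conjugates $\Delta_m$ into the one-dimensional Schr\"odinger operator with inverse-square potential, $\partial_r^2-(m^2-\tfrac14)r^{-2}$ on $L^2((0,\infty),dr)$; here $\|v(t)\|_{L^2(dr)}=(2\pi)^{-1/2}\|g\|_{L^2(\real^2)}$ is conserved, and $m^2-\tfrac14\ge-\tfrac14$ is the critical Hardy coupling, so the associated Hamiltonian is non-negative. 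Since $\||x|^{1/2}e^{it\Delta}g\|_{L^\infty_x}=\|v(t)\|_{L^\infty_r}$, the dual estimate is exactly the one-dimensional Strichartz bound for the admissible pair $(q,p)=(4,\infty)$ for this flow. For $m=0$ this is the radial reduction already treated in \cite{killip2007cubic}; for $m\ge1$ the potential is repulsive, so the propagator obeys the same $|t|^{-1/2}$ dispersive decay and Strichartz estimates as the free one-dimensional propagator (by a comparison/positivity argument, or directly from the Strichartz theory for inverse-square potentials), and the bound follows from the standard $TT^*$ and one-dimensional Hardy--Littlewood--Sobolev argument.

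The step I expect to be the main obstacle is the one-dimensional Strichartz estimate for this inverse-square-potential flow, in particular making it uniform in $m$ down to the borderline coupling $m=0$, where the potential sits precisely at the Hardy threshold $-\tfrac14$; one may either quote \cite{killip2007cubic} for that endpoint or argue it directly. The remaining ingredients --- the H\"older/duality pairing, the conservation law, and the Christ--Kiselev step --- are routine, and the conceptual point is simply that the gain of $|x|^{1/2}$ spatial decay is not available from plain $\real^2$ Strichartz estimates but is exactly the decay encoded by equivariance.
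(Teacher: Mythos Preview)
Your reduction via duality and Christ--Kiselev to the dual homogeneous bound
\[
\big\| |x|^{1/2}\,e^{it\Delta}g \big\|_{L^4_tL^\infty_x} \lesssim \|g\|_{L^2_x},\qquad g\in L^2_m,
\]
is correct, and the paper's proof also passes through exactly this homogeneous estimate (via $TT^*$ and Hardy--Littlewood--Sobolev). The difference lies in how this dual bound is obtained.

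The paper does \emph{not} conjugate to a one-dimensional flow with inverse-square potential. Instead it computes the $m$-equivariant propagator kernel explicitly: for radial $u$,
\[
\big[e^{-im\alpha}e^{it\Delta}e^{im\theta}\big](x,y)=\frac{1}{4\pi it}\,e^{i(r^2+\rho^2)/4t}\,J_m\!\Big(\frac{r\rho}{2t}\Big),
\]
and the classical Bessel bound $|J_m(s)|\lesssim s^{-1/2}$ (uniformly in $m\ge0$) immediately yields the weighted dispersive estimate
\[
\big\| |x|^{1/2} e^{it\Delta} |x|^{1/2} f \big\|_{L^\infty_x}\lesssim |t|^{-1/2}\|f\|_{L^1_x},
\]
from which $TT^*$+HLS gives the dual bound in one stroke. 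This is entirely self-contained and treats all $m\ge0$ uniformly with no separate endpoint analysis.

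Your route, by contrast, outsources the dual bound to the $(4,\infty)$ Strichartz estimate for $-\partial_r^2+(m^2-\tfrac14)r^{-2}$ on the half-line. For $m\ge1$ this is fine; but the step you yourself flag --- the borderline $m=0$ with coupling $-\tfrac14$ --- is precisely where your sketch becomes a citation rather than a proof. If you unwind the Strichartz theory for that operator (dispersive estimate via its Hankel-transform spectral representation), you recover exactly the Bessel kernel above, so the paper's argument is what your approach would ultimately reduce to anyway. Your framework is correct, but the paper's direct kernel computation is shorter, uniform in $m$, and avoids invoking the inverse-square Strichartz machinery as a black box.
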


\begin{proof} As in the standard 
   proof of Strichartz estimate, besides $TT^*$ method and Hardy-Littlewood-Sobolev inequality, we only need to show a dispersive estimate
  \beq \| |x|^{\frac{1}{2}} e^{it\Delta} |x|^{\frac{1}{2}}f \|_{L^\infty_x(\real^2)} \lesssim |t|^{-\frac{1}{2}} \| f \|_{L^1_x(\real^2)} \label{mdisp} \eeq
  for any $m$-equivariant function $f(x) = u(r)e^{im\theta} \in L^1_m(\real^2)$. 
  And this follows from the computation of kernel of $e^{it\Delta}$ applied to $m$-equivariant functions. Under polar coordinates $x := (r \cos \theta, r \sin \theta)$, $y := (\rho \cos \alpha, \rho \sin \alpha)$, we see
  \begin{align*}
    (e^{it\Delta}f)(y)  & = \frac{1}{4\pi i t} \int_{\real^2} f(x) e^{\frac{i|x-y|^2}{4t}} dx \\
    &= \frac{1}{4\pi it} \int_0^\infty u(r) \int_0^{2\pi} e^{im\theta} e^{i\frac{r^2 + \rho^2 - 2r\rho \cos (\theta - \alpha)}{4t}} d\theta rdr \\
     & = \frac{1}{4\pi i t} e^{im\alpha} \int_0^\infty u(r) e^{i\frac{r^2+\rho^2}{4t}} \int_0^{2\pi} e^{i(m \omega - \frac{r \rho}{2t} \cos \omega)} d\omega rdr.
  \end{align*} 
  That is, for radial functions, $e^{-im\alpha}e^{it\Delta}e^{im\theta}$ has this kernel 
\begin{align}    [e^{-im\alpha}e^{it\Delta}e^{im\theta}](x, y) = &\frac{1}{4\pi it} e^{i\frac{r^2+\rho^2}{4t}} \int_0^{2\pi} e^{i(m \omega - \frac{r \rho}{2t} \cos \omega)} \frac{d\omega}{2\pi} \notag\\
    =& \frac{1}{4\pi i t} e^{i\frac{r^2+\rho^2}{4t}} J_m\left(\frac{r \rho}{2t}\right) \label{propagatordecay}
  \end{align}
  where $J_\nu$ denotes the Bessel function of order $\nu$. So by the behavior of Bessel functions~\cite{gradshteyn2014table}, $|(\ref{propagatordecay})|\lesssim | r \rho t|^{-\frac{1}{2}}$. (\ref{mdisp}) follows immediately.
\end{proof}

\subsubsection{In-out decomposition}

Finally, we present one more useful tool in the following analysis --- the incoming / outgoing decomposition developed in \cite{killip2007cubic, killip2009mass}, and m-equivariant version in \cite{liu2016global}. 
 It derives from the relationship between Bessel function and Fourier transform of radial function. For $f(r, \theta) = e^{im\theta} u(r) \in L^2_m$, 
 \[ \hat{f}(\rho, \alpha) = 2\pi (-i)^m e^{im\alpha} \int^\infty_0 J_m (\rho r) f(r) r dr \quad \]
where $J_\nu$ denotes the Bessel function of order $\nu$.  We split the Bessel function $J_m$ into two Hankel functions, $H^{(1)}_m$ and $H_m^{(2)}$, correspondingly to projections onto outgoing and incoming waves. In particular, 
\[
J_m(|x||\xi|)=\frac{1}{2} H_m^{(1)}(|x||\xi|) + \frac12 H_m^{(2)}(|x||\xi|)
\]
where $H_m^{(1)}$ is the order $m$ Hankel function of the first kind and $H_m^{(2)}$ 
is the order $m$ Hankel function of the second kind.
We can define the in-out decomposition\footnote{The computational detail can be referred to in \cite[\S 6.521.2]{gradshteyn2014table}.}.
\begin{align*}
    [P^+ f](x) :=& \frac{1}{4\pi^2} e^{im\theta} \int_{\real^2} H^{(1)}_{m}(|x||\xi|) J_m(|\xi| |y|) f(|y|) d\xi dy \\
    =& \frac{1}{2} f(x) + \frac{i}{2\pi^2} \int_{\real^2} \left|\frac{y}{x}\right|^m \frac{f(y)}{|x|^2 - |y|^2} dy \\
    [P^- f](x) :=& \frac{1}{4\pi^2} e^{im\theta} \int_{\real^2} H^{(2)}_{m}(|x||\xi|) J_m(|\xi| |y|) f(|y|) d\xi dy \\
    =& \frac{1}{2} f(x) - \frac{i}{2\pi^2} \int_{\real^2} \left|\frac{y}{x}\right|^m \frac{f(y)}{|x|^2 - |y|^2} dy. 
\end{align*}

We denote $P^\pm_N := P^\pm P_N$ to be the composition, and omit the equivariance class $m$ if it's clear. We record the following properties of $P^\pm$:
\begin{prop}[Properties of $P^{\pm}$, \cite{killip2007cubic, killip2009mass, liu2016global}]\label{inoutest}The projection $P^\pm$ defined above satisfies the following properties.
\begin{enumerate}[(1)]
  \item $P^+ + P^-$ acts as the identity on m-equivariant functions.
  \item For $|x| \gtrsim N^{-1}$ and $t \gtrsim N^{-2}$, the integral kernel obeys
    \[ \left| [P^\pm_N e^{\mp i t \Delta}](x, y) \right| \lesssim \left\{ 
    \begin{array}{ll}
      (|x||y|)^{-\frac{1}{2}} |t|^{-\frac{1}{2}} &: |y| - |x| \sim Nt  \\
      \frac{N^2}{(N|x|)^\frac{1}{2} \langle N |y| \rangle^{\frac{1}{2}}} \langle N^2 t + N|x| - N |y| \rangle^{-n} &: otherwise
    \end{array}\right. \]
    for all $n \ge 0$.
  \item For $|x| \gtrsim N^{-1}$ and $t \lesssim N^{-2}$, the integral kernel obeys
      \[ \left| [P^\pm_N e^{\mp i t \Delta}](x, y) \right| \lesssim 
          \frac{N^2}{(N|x|)^\frac{1}{2} \langle N |y| \rangle^{\frac{1}{2}}} \langle N|x| - N |y| \rangle^{-n} \]
          for all $n \ge 0$.
  \item Fix $N > 0$. Then
  \[ \| \varphi_{\gtrsim \frac{1}{N}} P^\pm_{\ge N} f \|_{L^2(\real^2)} \lesssim \| f\|_{L^2(\real^2)} \]
  with an $N-$independent constant. 
\end{enumerate}
\end{prop}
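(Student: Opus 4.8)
The plan is to prove the four assertions in order, using only the explicit Hankel-transform representation of $P^{\pm}$ recorded above together with the classical asymptotics of Bessel and Hankel functions; the heart of the work is a one-dimensional oscillatory-integral analysis for items (2)--(3). For item (1): since $J_m=\tfrac12 H^{(1)}_m+\tfrac12 H^{(2)}_m$, the kernels of $P^{+}$ and $P^{-}$ add up to $\tfrac{1}{4\pi^{2}}e^{im\theta}\int_{\real^{2}}J_m(|x||\xi|)J_m(|\xi||y|)\,d\xi$, which is exactly the Fourier inversion formula restricted to the $m$-equivariant sector (equivalently, Hankel inversion for the order-$m$ transform); hence $P^{+}+P^{-}=\mathrm{Id}$ on $L^{2}_{m}$, and on the level of the second displayed formulas for $P^{\pm}f$ this is visible as the cancellation of the two principal-value terms.

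For items (2)--(3), I would first write the kernel of $P^{\pm}_{N}e^{\mp it\Delta}$ acting on $m$-equivariant functions as
\[
K^{\pm}_{N}(x,y)=c\,e^{im(\theta-\alpha)}\int_{0}^{\infty}H^{(1)}_{m}\!\!/H^{(2)}_{m}(\rho|x|)\,J_m(\rho|y|)\,e^{\mp it\rho^{2}}\,\varphi_{N}(\rho)\,\rho\,d\rho ,
\]
where $\varphi_{N}$ localizes $\rho\sim N$ and $H^{(1)}_m$ (resp.\ $H^{(2)}_m$) goes with $P^{+}$ (resp.\ $P^{-}$). In the regime $|x|\gtrsim N^{-1}$ one has $\rho|x|\gtrsim1$ on $\operatorname{supp}\varphi_{N}$, so I substitute the leading asymptotics $H^{(1)}_{m}(s)=\sqrt{2/(\pi s)}\,e^{i(s-m\pi/2-\pi/4)}\bigl(1+O(s^{-1})\bigr)$ and its conjugate. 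For the factor $J_m(\rho|y|)$ I split two cases: when $N|y|\gtrsim1$ use $J_m(s)=\sqrt{2/(\pi s)}\cos(s-m\pi/2-\pi/4)+O(s^{-3/2})$; when $N|y|\lesssim1$ use instead the convergent power series $J_m(s)=O(s^{m})$, which is harmless because then $\langle N|y|\rangle^{1/2}\sim1$ and $N|x|-N|y|\sim N|x|$, so the target bound reduces to the prefactor times rapid decay in $\langle N^{2}t+N|x|\rangle$. After the substitution the phase is $\psi(\rho)=\mp t\rho^{2}+\rho\bigl(\pm|x|\pm|y|\bigr)$, with a single stationary point $\rho_{*}=\pm(|x|-|y|)/(2t)$. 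When $\rho_{*}\sim N$, i.e.\ $|y|-|x|\sim Nt$, one-dimensional stationary phase produces the factor $|t|^{-1/2}$ and hence the bound $(|x||y|)^{-1/2}|t|^{-1/2}$; otherwise $|\psi'(\rho)|\gtrsim N^{-1}\langle N^{2}t+N|x|-N|y|\rangle$ on $\operatorname{supp}\varphi_{N}$, and repeated integration by parts yields the gain $\langle N^{2}t+N|x|-N|y|\rangle^{-n}$ for every $n$, the amplitude contributing $\rho\,d\rho\cdot(\rho|x|)^{-1/2}(\rho|y|)^{-1/2}\sim N^{2}(N|x|)^{-1/2}(N|y|)^{-1/2}$. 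The case $t\lesssim N^{-2}$ is identical except that the $N^{2}t$ term is now $O(1)$ and may be dropped, leaving $\langle N|x|-N|y|\rangle^{-n}$.

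For item (4) I would decompose $P^{\pm}_{\ge N}=\sum_{M\ge N}P^{\pm}_{M}$ and estimate each piece separately. Specializing the expansions above to $t=0$ (or running the same stationary/non-stationary phase analysis directly on $H^{(1)}_{m}\!\!/H^{(2)}_{m}(\rho|x|)J_m(\rho|y|)\varphi_{M}(\rho)\rho\,d\rho$) shows that for $|x|\gtrsim N^{-1}$ the kernel of $\varphi_{\gtrsim1/N}P^{\pm}_{M}$ is $O\bigl(M^{2}(M|x|)^{-1/2}\langle M|y|\rangle^{-1/2}\langle M|x|-M|y|\rangle^{-n}\bigr)$, and Schur's test against this kernel gives $\|\varphi_{\gtrsim1/N}P^{\pm}_{M}\|_{L^{2}\to L^{2}}\lesssim1$ uniformly in $M\ge N$. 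Summing in $M$ is then handled by almost orthogonality of the Littlewood--Paley pieces together with the mismatch estimates of Lemma~\ref{mismatch2}, which control the cross terms $\varphi_{\gtrsim1/N}P^{\pm}_{M}(\varphi_{\gtrsim1/N}P^{\pm}_{M'})^{*}$ when $\max\{M,M'\}\ge4\min\{M,M'\}$; alternatively one may simply quote the corresponding estimate of \cite{killip2007cubic, killip2009mass} and its $m$-equivariant refinement in \cite{liu2016global}.

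I expect the main obstacle to be the oscillatory-integral bookkeeping in (2)--(3): making the single leading term of the Hankel asymptotics suffice while uniformly controlling all error terms across the three parameters $N$, $|x|$, $|y|$ (in particular at the transition $N|y|\sim1$, where one must switch from the oscillatory asymptotics of $J_m$ to its power series), and extracting arbitrary polynomial decay away from the stationary point. A secondary subtlety is hidden in (4): $P^{\pm}$ is \emph{not} bounded on $L^{2}$ on its own --- its principal-value part is a weighted one-dimensional Hilbert transform with a logarithmic failure at $|x|=|y|$ --- so the spatial cutoff $|x|\gtrsim1/N$ and the high-frequency truncation are both genuinely needed, and the mismatch estimates are exactly what allow one to resum the dyadic pieces without reintroducing that divergence.
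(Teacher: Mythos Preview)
The paper does not supply its own proof of this proposition: it is stated with citations to \cite{killip2007cubic, killip2009mass, liu2016global}, and the only justification given is the remark immediately following, namely that the $m$-equivariant case is ``similar to the radial case due to the similar asymptotic behavior of Hankel functions and Bessel functions of all orders.'' So there is nothing to compare against beyond that one sentence.

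Your proposal goes considerably further than the paper does, and your outline for (1)--(3) is exactly the argument carried out in the cited references: write the kernel as a one-dimensional oscillatory integral in $\rho$ with amplitude localized to $\rho\sim N$, substitute the large-argument asymptotics of $H^{(1)}_m,H^{(2)}_m,J_m$, and run stationary phase versus non-stationary phase according to whether $\rho_*\sim N$ or not. Your identification of the two genuine subtleties --- the transition at $N|y|\sim 1$ where one switches from oscillatory to power-series behavior of $J_m$, and the uniform control of the lower-order terms in the Hankel expansion --- is accurate; these are precisely the points that require care in \cite{killip2009mass}. One minor bookkeeping slip: the Fourier multiplier of $e^{\mp it\Delta}$ is $e^{\pm it|\xi|^2}$, so the phase you wrote has a sign flipped, but this only relabels which branch of $J_m$ carries the stationary point and does not affect the conclusion.

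For (4) your approach is more roundabout than the one in the references. In \cite{killip2009mass, liu2016global} the $L^2$ bound is obtained directly from the explicit principal-value formula for $P^{\pm}$ displayed just before the proposition: the identity part is trivial, and the weighted Hilbert-transform part $\int |y/x|^m\,(|x|^2-|y|^2)^{-1}f(y)\,dy$ is handled by splitting $|y|\le\tfrac12|x|$, $|y|\sim|x|$, $|y|\ge2|x|$ and using the spatial cutoff together with the high-frequency projection $P_{\ge N}$ to kill the logarithmic divergence. Your dyadic decomposition $P^{\pm}_{\ge N}=\sum_{M\ge N}P^{\pm}_M$ followed by Cotlar--Stein/almost orthogonality can be made to work, but the cross terms $P^{\pm}_M(P^{\pm}_{M'})^*$ are not controlled by Lemma~\ref{mismatch2} as stated (that lemma is about spatial cutoffs sandwiched between frequency projections, not about $P^{\pm}$), so you would need to go back to the kernel bounds you derived in (3) to get the off-diagonal decay; at that point the direct principal-value argument is shorter.
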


\begin{rmk}
  These result are established in \cite{killip2007cubic, killip2009mass} for radial in-out decomposition first, then \cite{liu2016global} propose the m-equivariant case above, which is similar to the radial case due to the similar asymptotic behavior of Hankel functions and Bessel functions of all orders.
\end{rmk}

\subsubsection{Almost periodic solution}

Concentration compactness argument ensures that the threshold solution enjoys certain strong concentration property in terms of charge. This is characterized by the so called almost periodicity modulo symmetries property, see \cite{tao2008minimal, liu2016global}. The following theorem and lemma will be crucial to our proof. 

\begin{thm}[Almost periodicity modulo symmetries]\label{almostpms} Let $\phi: [0, \infty) \times \real^2 \rightarrow \cpx$ be solution to (\ref{CSS}) which satisfies $\phi_0 \in L^2_m$ for $m \ge 0$, $\| \phi_0 \|_{L^2} = \| Q\|_{L^2}$, and 
\[ \| \phi \|_{L^4_{t, x}([0, \infty) \times \real^2)} = \infty. \]
Then $\phi$ is almost periodic modulo symmetries in the following sense: there exist scaling functions $N: [0, \infty) \rightarrow \real^+$ and compactness modulus function $C: \real^+ \rightarrow \real^+$, such that 
  \beq \int_{|x | \ge C(\eta)/N(t)} |\phi(t, x)|^2 dx \le \eta, \qquad 
       \int_{|\xi | \ge C(\eta)N(t)} |\hat{\phi}(t, \xi)|^2 d\xi \le \eta,  \label{apms1}\eeq
for any $t \in [0, \infty)$ and $\eta > 0$. Equivalently, the orbit $\{ N(t)^{-1}\phi(t, \frac{x}{N(t)}): t \in [0, \infty)\}$ is precompact in $L^2_x(\real^2)$. 

Moreover, $N(t)$ satisfies the local constancy property:
\beq N(t) \sim_\phi N(t_0) \label{apms2} \eeq
whenever $t, t_0 \in [0, \infty)$ and $|t-t_0 | \lesssim_\phi N(t_0)^{-2}$. And 
\beq N(t) \gtrsim_\phi N(t_0) \langle t-t_0 \rangle^{-\frac{1}{2}}, \label{apms3} \eeq
for all $t, t_0 \in [0, \infty)$.

\end{thm}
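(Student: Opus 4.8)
\textbf{Proof plan for Theorem~\ref{almostpms}.}
The plan is to derive this from the concentration-compactness/rigidity machinery already in place in \cite{liu2016global}, since the hypotheses (threshold charge, infinite $L^4_{t,x}$ norm) are exactly those under which a minimal blowup solution was constructed there. First I would recall that the threshold $\|\phi_0\|_{L^2}=\|Q\|_{L^2}$ is, by part (2) of Theorem~\ref{thresholdresult}, the smallest charge at which scattering can fail; hence a solution with $\|\phi\|_{L^4_{t,x}([0,\infty)\times\real^2)}=\infty$ is a \emph{minimal-charge} non-scattering solution. The standard Keraani-type argument (linear profile decomposition for the equivariant Schr\"odinger propagator, together with the stability theory and the inductive hypothesis that all strictly-smaller-charge data scatter) then forces a single profile to carry the entire charge, and the resulting solution has precompact orbit modulo the scaling symmetry that survives the equivariant reduction (phase rotation and $L^2$-scaling; there is no translation or Galilean freedom here because equivariance pins the center at the origin). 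Precompactness of $\{N(t)^{-1}\phi(t,x/N(t))\}$ in $L^2_x$ is then equivalent, via the Arzel\`a--Ascoli/Riesz compactness criterion in $L^2$, to the uniform tightness statement \eqref{apms1} in both physical and Fourier space, which is what one records as almost periodicity modulo symmetries.

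Next I would establish the two quantitative properties of $N(t)$. The local constancy \eqref{apms2} follows from the stability theory for (\ref{CSS}): on a time interval of length $\lesssim N(t_0)^{-2}$ the solution, rescaled to frequency scale $N(t_0)$, stays close to its initial value in $L^2$ (the nonlinear evolution cannot move an almost periodic solution appreciably on its natural time scale without contradicting precompactness), so the scale $N(t)$ extracted by the compactness modulus function cannot jump by more than a constant factor. For the lower bound \eqref{apms3}, I would argue by contradiction: if $N(t)/N(t_0)\to 0$ faster than $\langle t-t_0\rangle^{-1/2}$ along some sequence, one chains the local constancy estimate over $O(\langle t-t_0\rangle)$ many time steps of length $\sim N^{-2}$ to control the total elapsed time, yielding the claimed polynomial lower bound; this is the usual bookkeeping argument (cf. \cite{killip2009characterization, li2012rigidity}) showing the frequency scale cannot collapse too quickly.

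The only genuinely new points relative to the mass-critical NLS references are (i) the propagator is the $m$-equivariant one, so the profile decomposition must be run with the equivariant Strichartz and the in-out decomposition recalled above in Proposition~\ref{inoutest} and Lemma~\ref{weightedstrichartz}, and (ii) the nonlinearity $F(\phi)$ in (\ref{nl}) is nonlocal, so the stability theory and the ``defect of compactness implies a profile'' step need the corresponding multilinear estimates for the Chern--Simons terms. Both of these have already been carried out in \cite{liu2016global}, so the proof here is essentially a citation plus the standard reduction; accordingly I would simply reference \cite{liu2016global, tao2008minimal} for the construction of the almost periodic solution and include only the short deductions of \eqref{apms1}--\eqref{apms3} from precompactness and stability. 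The main obstacle, were one to reproduce it in full, would be the nonlocal stability theory needed to run the profile decomposition cleanly; since we are allowed to invoke \cite{liu2016global}, this obstacle is bypassed.
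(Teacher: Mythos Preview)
Your proposal is correct and mirrors the paper's own treatment: the paper does not give a self-contained proof either, but simply remarks that the almost periodicity follows from the Palais--Smale condition modulo symmetries as in \cite[Proposition 2.1]{tao2008minimal} and \cite{liu2016global}, that the local constancy \eqref{apms2} is \cite[Corollary 3.6]{killip2007cubic} (whose proof is equation-independent modulo the Cauchy theory from \cite{liu2016global}), and that \eqref{apms3} follows directly from \eqref{apms2}. Your plan is slightly more expository in that you sketch the profile-decomposition and stability-theory logic rather than merely citing, but the underlying route is identical.
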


This first part of this theorem is not exactly the same as \cite[Lemma 1.7]{liu2016global}, but similarly proved using the Palais-Smale condition modulo symmetries as \cite[Proposition 2.1]{tao2008minimal}. The constancy property (\ref{apms2}) is just the same Corollary 3.6 in \cite{killip2007cubic}. The proof there is quite general. The only part relying on equation is a suitable and standard Cauchy theory from \cite{liu2016global}. (\ref{apms3}) directly follows (\ref{apms2}).

One important feature of almost periodic solution is the following Duhamel formula, where the free evolution term disappears:

\begin{lem}[Non-scattering Duhamel]\label{duhamel}
  Let $\phi$ be an almost periodic solution to (\ref{CSS}) on $[0, \infty)$ in the sense of Theorem \ref{almostpms}. Then, for all $t \in [0, \infty)$,
  \beq \phi(t) = -\lim_{T\rightarrow +\infty} i \int^T_t e^{i(t-t') \Delta} F(\phi(t')) dt' \eeq
  as a weak limit in $L^2_x$, where $F(\phi)$ is the nonlinearity of (\ref{CSS}).
\end{lem}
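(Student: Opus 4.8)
\textbf{Proof proposal for Lemma \ref{duhamel}.}

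The plan is to establish the non-scattering Duhamel formula by combining the usual Duhamel representation of the solution with the ``forward in time'' decay of the free evolution that is forced by almost periodicity. First I would write, for $t \le T$, the exact Duhamel identity
\beq \phi(t) = e^{i(t-T)\Delta} \phi(T) - i\int_t^T e^{i(t-t')\Delta} F(\phi(t'))\,dt', \eeq
which is valid by the $L^2$ local theory of \cite{liu2016global}. The claim then reduces to showing that $e^{i(t-T)\Delta}\phi(T) \rightharpoonup 0$ weakly in $L^2_x$ as $T \to +\infty$, together with the convergence (in the weak $L^2$ sense) of the Duhamel integral to its improper version. The second point is the easier one: for fixed $t$, the tail $\int_S^T e^{i(t-t')\Delta}F(\phi(t'))\,dt'$ can be controlled in $L^2_x$ (or tested against a fixed $L^2$ function) using the Strichartz and weighted Strichartz estimates of \S\ref{4.1} on the interval $[S,T]$, where the relevant spacetime norms of $\phi$ over $[S,\infty)$ are finite because they are controlled by charge and the almost periodicity; so Cauchy's criterion gives a weak (in fact, I expect norm) limit as $T \to \infty$.

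The main work is the vanishing of the free evolution. I would argue by contradiction: if $e^{i(t-T_n)\Delta}\phi(T_n)$ does not converge weakly to $0$ along some sequence $T_n \to \infty$, then after passing to a subsequence it converges weakly to some $g \ne 0$ in $L^2_x$. Using almost periodicity modulo symmetries (Theorem \ref{almostpms}), the rescaled orbit $\{N(T_n)^{-1}\phi(T_n, \cdot/N(T_n))\}$ is precompact in $L^2_x$, so one gets $L^2$-strong convergence of a rescaled subsequence; one then tracks the scaling parameters $N(T_n)$ and shows that, regardless of whether $N(T_n)(t-T_n) \to 0$, stays bounded, or $\to \infty$, the dispersive decay of $e^{it\Delta}$ (via the kernel bound $|t|^{-1}$, or the $m$-equivariant dispersive estimate (\ref{mdisp})) kills any nonzero weak limit of the free evolution — this is precisely the standard argument that ``a non-scattering almost periodic solution has no free radiation''. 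Concretely, testing $e^{i(t-T_n)\Delta}\phi(T_n)$ against a fixed Schwartz function and using $\||x|^{1/2}e^{is\Delta}|x|^{1/2}\|_{L^1\to L^\infty} \lesssim |s|^{-1/2}$ together with the uniform $L^2$-concentration of $\phi(T_n)$ at scale $N(T_n)^{-1}$ forces the pairing to zero as $T_n \to \infty$, contradicting $g \ne 0$; the local constancy and lower bound (\ref{apms2})–(\ref{apms3}) for $N$ are what make the relevant time differences $|t-T_n|$ large compared to $N(T_n)^{-2}$ so the dispersive regime applies.

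I expect the delicate step to be the uniform control of the Duhamel tail and of the free-evolution pairing simultaneously across all three regimes of $N(T_n)|t-T_n|$, since the nonlinearity of (\ref{CSS}) is nonlocal: the terms $\frac{2m}{r}A_\theta \phi$, $A_0\phi$, $\frac1{r^2}A_\theta^2\phi$ need to be estimated in $L^{4/3}_{t,x}$ or through the weighted Strichartz norm $\||x|^{-1/2}F\|_{L^{4/3}_tL^1_x}$, and one has to check that the relevant norms of these nonlocal quantities over a tail interval $[S,\infty)$ are finite and small. This is handled by the charge bound together with the almost periodicity (which localizes $\phi$ and hence $A_\theta$, $A_0$), but writing the estimates carefully — in particular bounding $A_\theta$ and $A_0$ pointwise by the local charge and feeding this into the Strichartz machinery — is the technical heart of the argument. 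Once that is in place, the weak-$L^2$ limit statement follows, and the improper integral is interpreted as a weak limit exactly as in \cite{tao2008minimal}.
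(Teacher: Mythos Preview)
Your decomposition into ``free evolution vanishes weakly'' plus ``Duhamel tail converges'' is the right starting point, but you have the logical structure inverted and as a result you propose an argument that cannot work. Once you have the exact identity
\[
-\,i\int_t^T e^{i(t-t')\Delta}F(\phi(t'))\,dt' \;=\; \phi(t) - e^{i(t-T)\Delta}\phi(T),
\]
the weak convergence of the left-hand side to $\phi(t)$ is \emph{equivalent} to $e^{i(t-T)\Delta}\phi(T)\rightharpoonup 0$; there is no separate ``tail'' estimate to do, and in particular no estimate on $F(\phi)$ is needed at all. Your proposed route for the tail --- Strichartz control of $F(\phi)$ over $[S,\infty)$ --- fails outright: the solution is assumed to blow up forward, so $\|\phi\|_{L^4_{t,x}([S,\infty)\times\real^2)}=\infty$ for every $S$, and the relevant dual Strichartz norms of $F(\phi)$ are not finite on any tail. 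Worse, your expectation of \emph{norm} convergence of the Duhamel integral is exactly the statement that $e^{-iT\Delta}\phi(T)$ converges in $L^2$, i.e.\ that $\phi$ scatters forward, contradicting the hypothesis. The limit here is genuinely only weak.

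This also means that the ``technical heart'' you identify --- controlling the nonlocal pieces $\tfrac{2m}{r}A_\theta\phi$, $A_0\phi$, $\tfrac{1}{r^2}A_\theta^2\phi$ in Strichartz-type norms --- is a red herring for this lemma. The paper's treatment is simply to invoke \cite[\S 6]{tao2008minimal}: the only input is almost periodicity of the orbit together with dispersive properties of the \emph{linear} propagator $e^{it\Delta}$, and the form of the nonlinearity never enters. Your sketch for the vanishing of the free piece (precompactness of the rescaled orbit plus case analysis on $N(T_n)$ with dispersive decay) is in the right spirit, though you should note that the bound (\ref{apms3}) does not by itself force $|t-T_n|\gg N(T_n)^{-2}$; the standard argument instead tests against Schwartz $\psi$ and splits according to whether $N(T_n)\to 0$, $N(T_n)\to\infty$, or $N(T_n)\sim 1$, using either physical-space dispersion of $e^{i(T_n-t)\Delta}\psi$ or frequency localization of $\hat\phi(T_n)$ in each regime.
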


The proof in \cite[\S 6]{tao2008minimal} for (\ref{NLS}) merely depends on the linear evolution operator of Schr\"odinger equation, hence still applicable in our setting (\ref{CSS}).

\subsection{Reduction of the Proof for Theorem \ref{rigidinfselfdual}}\label{4.2}

In this subsection, we will see how Theorem \ref{rigidinfselfdual} reduces to the following frequency decay estimate. Therefore the remaining task in following subsections is just to prove this estimate.

\begin{prop}[Frequency decay estimate]\label{freqdecayest}
 Let $m \ge 0$,$\phi_0 \in H^1_m(\real^2)$, $\| \phi_0\|_{L^2} = \|\phi^{(m)} \|_{L^2}$ and $E[\phi] > 0$. Let $\phi = ue^{im\theta}$ be a solution to (\ref{CSS}) with initial data $\phi(0) = \phi_0$, and blows up as $t \rightarrow +\infty$. 
 Then there exists $\epsilon > 0$, such that for any dyadic number $N \ge 1$, we have
  \beq \| \varphi_{>1} P_N \phi(t) \|_{L^\infty_t L^2_x ([0, \infty) \times \real^2)} \lesssim \| \tilde{P}_N \phi_0 \|_{L^2_x} + N^{-1-\epsilon}. \eeq
\end{prop}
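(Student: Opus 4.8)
The plan is to run a bootstrap (fixed-point) argument on the quantity
\[
\mathcal{A}(N) := \| \varphi_{>1} P_N \phi \|_{L^\infty_t L^2_x([0,\infty)\times\real^2)},
\]
using the non-scattering Duhamel formula of Lemma~\ref{duhamel} as the basic representation of $\phi(t)$. Since the free-evolution term is absent, we write
\[
\varphi_{>1} P_N \phi(t) = -\,i\lim_{T\to\infty}\int_t^T \varphi_{>1} P_N\, e^{i(t-t')\Delta} F(\phi(t'))\, dt',
\]
split $P_N = P_N^+ + P_N^-$ into incoming/outgoing parts, and for each piece break the time integral at the scale $|t-t'|\sim N^{-2}$ where the two kernel regimes of Proposition~\ref{inoutest} switch. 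The kernel bounds give genuine spatial decay $(|x||y|)^{-1/2}|t-t'|^{-1/2}$ in the "light-cone" regime and rapid decay $\langle N^2(t-t') + N|x| - N|y|\rangle^{-n}$ otherwise; combined with the weighted Strichartz estimate (Lemma~\ref{weightedstrichartz}), which is exactly tuned to absorb the $|x|^{-1/2}$ weight, these control the contribution of $F(\phi)$ restricted to the region $|x|\gtrsim N^{-1}$. On the complementary region $|x|\lesssim N^{-1}$ one uses the mismatch estimates (frequency-space version, Lemma~\ref{mismatch2}) together with the precompactness/almost-periodicity from Theorem~\ref{almostpms} to see that only a negligible (in $N$) amount of charge lives there. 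This is the same architecture as in Killip--Li--Visan--Zhang and Li--Zhang, adapted to the equivariant CSS nonlinearity.

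\textbf{Handling the nonlinearity.} The nonlinearity $F(\phi) = \frac{2m}{r}A_\theta\phi + A_0\phi + \frac{1}{r^2}A_\theta^2\phi - |\phi|^2\phi$ must be estimated term by term. The cubic term $|\phi|^2\phi$ is treated as in the NLS case. The genuinely new terms are the nonlocal ones involving $A_\theta[u]$ and $A_0[u]$. Here I would use the pointwise bounds $|A_\theta[u](r)| \lesssim \min\{r^2\|u\|_{L^\infty}^2,\ \|u\|_{L^2}^2\}$ and the analogous bound for $A_0$ (which, being an integral from $r$ to $\infty$, is already small when charge is localized), so that these terms are controlled by a few powers of $\|\phi\|_{L^2}$, $\|\phi\|_{L^4_{t,x}}$ on short-time windows, and powers of $N$. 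The key point is that on any time window of length $\sim N^{-2}$, the local $L^4_{t,x}$ norm of $\phi$ is small (by almost periodicity and the Strichartz bound on short intervals), so each nonlinear term comes with a genuinely small constant plus the decaying-in-$N$ pieces; summing the geometric series over the bad frequency scales $M \le N$ and over time windows produces the stated bound $\lesssim \|\tilde P_N\phi_0\|_{L^2} + N^{-1-\epsilon}$, where the first term records the portion of the Duhamel integral over a bounded initial time interval (which is close to $\tilde P_N\phi_0$ by the local theory) and $N^{-1-\epsilon}$ collects all the tail contributions. One should also incorporate, via Bernstein, the $H^1$-regularity of $\phi_0$ to get the extra $\epsilon$ of decay beyond $N^{-1}$.

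\textbf{The reduction to a fixed-point inequality.} Concretely, I expect to derive an inequality of the schematic form
\[
\mathcal{A}(N) \lesssim \|\tilde P_N\phi_0\|_{L^2} + N^{-1-\epsilon} + \sum_{1\le M\le N}\Big(\tfrac{M}{N}\Big)^{\theta}\,\mathcal{A}(M) + (\text{small})\cdot \sup_{M\ge 1}\mathcal{A}(M),
\]
valid for all dyadic $N\ge 1$, with $\theta>0$ and the "small" constant coming from the short-window smallness of $\|\phi\|_{L^4_{t,x}}$. Since $\sup_M \mathcal{A}(M) \le \|\phi_0\|_{L^2} < \infty$ is finite a priori, the last term can be absorbed; then a standard Gronwall/acausal-summation lemma (e.g. the one in \cite{killip2009characterization}) converts the frequency-localized recursion into the desired closed estimate. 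The whole computation is carried out assuming $E[\phi]>0$ only insofar as this is the hypothesis under which $\phi$ is a genuine almost-periodic non-scattering solution; the positivity of energy is used later (not in this Proposition) to derive the contradiction.

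\textbf{Main obstacle.} The hardest part, I expect, is \emph{bookkeeping the nonlocal terms} $\frac{2m}{r}A_\theta\phi$, $\frac{1}{r^2}A_\theta^2\phi$ and especially $A_0\phi$ through the in-out decomposition. Unlike the pointwise cubic nonlinearity, these are non-compactly-supported in $x$ (the $A_0$ integral reaches out to spatial infinity) and have delicate behavior both near $r=0$ (where the weights $1/r$, $1/r^2$ are singular but are tamed by the equivariant vanishing of $u$) and near $r=\infty$ (where slow polynomial decay of $Q$-like profiles is the worst case). Getting the spatial-decay gain from the in-out kernels to survive after pairing against these weighted nonlocal factors — i.e. checking that $|x|^{-1/2}F(\phi)$ really lies in $L^{4/3}_t L^1_x$ with the right size on each dyadic-in-$N$ piece — is where the argument becomes long and technical, and is presumably the reason the authors flag this part of the proof as "particularly long and complicated."
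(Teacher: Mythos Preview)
Your architecture (in-out decomposition, split of the Duhamel integral by time scale and spatial region, weighted Strichartz for the main piece) matches the paper, but there is a genuine conceptual gap. The paper does \emph{not} run a bootstrap on $\mathcal{A}(N)$; instead it relies on the prior result Proposition~\ref{weaklocal} (weak localization of kinetic energy), which is proved by \emph{modulation analysis} around $Q$ using the factorization $\mathcal{L}_Q=L_Q^*L_Q$ and its coercivity (Lemmas~\ref{coercivity}, \ref{taylor}, \ref{rigidquan}, Proposition~\ref{nonsharpdecomp}). That proposition gives $\|\varphi_{>c}\nabla\phi(t)\|_{L^2}\lesssim_\phi 1$ \emph{uniformly in $t$}, whence Strauss' inequality yields $|u(t,r)|\lesssim r^{-1/2}$ for $r\gtrsim 1$, and this pointwise decay is exactly what powers the nonlinear estimates (Lemma~\ref{nonlinearest}): one derivative of $F(\phi)$ stays in $L^\infty_t L^2_x$ and in $L^\infty_t L^1_x$ away from the origin, so Bernstein extracts $N^{-1}$ and weighted Strichartz extracts a further $N^{-3/4}$. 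Your proposal to get the extra $\epsilon$ of decay from ``the $H^1$-regularity of $\phi_0$'' cannot work, because $\phi_0\in H^1$ gives no a priori uniform-in-time $\dot H^1$ control on $\phi(t)$; the whole purpose of the spectral/modulation step is to manufacture that control. Relatedly, your claim that $E[\phi]>0$ is not used in this Proposition is false: it enters through the constant in Proposition~\ref{weaklocal} (a lower bound on the modulation scale $\lambda$), and hence through Lemma~\ref{nonlinearest}.

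A second, smaller issue: you apply the non-scattering Duhamel (Lemma~\ref{duhamel}) to \emph{both} $P_N^{\pm}$ and then say the $\|\tilde P_N\phi_0\|_{L^2}$ term ``records the portion of the Duhamel integral over a bounded initial time interval.'' That is not how it arises. The paper uses the \emph{forward} Duhamel from $t=0$ for the incoming piece $P_N^-$ and the non-scattering backward Duhamel only for the outgoing piece $P_N^+$; the term $\varphi_{>1}P_N^- e^{it\Delta}\phi_0$ is then bounded directly by $\|\tilde P_N\phi_0\|_{L^2}$ via Proposition~\ref{inoutest}(4). Also, the time splitting is at $\tau=N^{-1}$ (not $N^{-2}$) and the spatial cut at $|y|\sim N\tau$, so that on the small-$|y|$ piece the in-out kernel is genuinely nonstationary and decays rapidly. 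With these two corrections---feed in Proposition~\ref{weaklocal} to get Lemma~\ref{nonlinearest}, and use forward Duhamel for $P_N^-$---the rest of your outline is correct and no recursion in $N$ is needed.
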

 
This decay estimate will imply the localization of kinetic energy Theorem \ref{stronglocal} below. Then a contradiction argument using truncated virial identity estimate will establish Theorem \ref{rigidinfselfdual}. To begin with, we state and prove Theorem \ref{stronglocal}.


\begin{thm}[Localization of kinetic energy]\label{stronglocal}
  Let $m \ge 0$, and $\phi$ be as in Theorem~\ref{freqdecayest}. 
From Theorem \ref{almostpms}, $\phi$ is almost periodic modulo symmetries. Let $N(t)$ be the corresponding scaling function, then there exists $\tilde{C}:\real^+ \rightarrow \real^+$ such that for all $\eta > 0$, 
  \[ \| \nabla \phi (t) \|_{L^2_x(\{ |x| > \tilde{C}(\eta) \langle N(t)^{-1}\rangle \})} \le \eta. \]
\end{thm}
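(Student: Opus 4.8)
\textbf{Proof proposal for Theorem \ref{stronglocal}.}

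The plan is to upgrade the weak localization of kinetic energy into the strong form claimed here, using the frequency decay estimate of Proposition \ref{freqdecayest} as the main input. By the scaling symmetry of (\ref{CSS}) together with the local constancy property (\ref{apms2}), it suffices to prove the statement at a fixed time, say $t=0$, after rescaling so that $N(0)\sim 1$; then $\langle N(t)^{-1}\rangle$ becomes comparable to a constant and what must be shown is that $\|\nabla\phi(0)\|_{L^2_x(\{|x|>\tilde C(\eta)\})}\le\eta$ uniformly over the orbit (this uniformity being exactly what precompactness in $L^2$ modulo symmetries, from Theorem \ref{almostpms}, provides once we have a bound at each time). Concretely, I would fix $\eta>0$ and estimate $\|\nabla\phi\|_{L^2(\{|x|>R\})}$ by decomposing in Littlewood--Paley pieces: write $\nabla\phi=\sum_{N}\nabla P_N\phi$ and split into low frequencies $N<N_0$, medium, and high frequencies $N\ge N_0$, where $N_0=N_0(\eta)$ is chosen large at the end.

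For the high-frequency part, apply $\varphi_{>R}$ and use the mismatch estimates in physical space (to replace $\varphi_{>R}P_N$ acting on a far-away part of $\phi$ by something negligible) together with Bernstein to pass $\nabla$ to a factor of $N$; the key gain is Proposition \ref{freqdecayest}, which bounds $\|\varphi_{>1}P_N\phi(t)\|_{L^2}$ by $\|\tilde P_N\phi_0\|_{L^2}+N^{-1-\epsilon}$. Summing $N\cdot(N^{-1-\epsilon})=N^{-\epsilon}$ over $N\ge N_0$ gives a tail that is $O(N_0^{-\epsilon})$, hence small; the contribution of $\sum_{N\ge N_0}N\|\tilde P_N\phi_0\|_{L^2}$ would be controlled using $\phi_0\in H^1_m$ — but here one must be careful, since the theorem is meant to hold uniformly along the orbit and only $L^2$-precompactness (not $H^1$-precompactness) is available; so rather than invoking $H^1$ of the data I would instead run the argument directly on $\phi(t)$, using that $\|\nabla\phi(t)\|_{L^2}$ is finite (from $H^1_m$ persistence of regularity) and that the high-frequency tail of $\phi(t)$ in $L^2$ is uniformly small by the second part of (\ref{apms1}), combined with Proposition \ref{freqdecayest} to localize that tail spatially. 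For the low-and-medium frequency part, $N<N_0$, one uses that $\|\nabla P_{<N_0}\phi(t)\|_{L^2}\lesssim N_0\|\phi(t)\|_{L^2}=N_0\|Q\|_{L^2}$ is a fixed (though large) quantity, while the spatial cutoff $\varphi_{>R}$ kills most of it: by the physical-space mismatch estimate $\varphi_{>R}\nabla P_{<N_0}\varphi_{\le R/2}$ is $O_m((N_0/R)^{m})$ small, and on the remaining piece $\varphi_{\le R/2}\phi(t)$ has small $L^2$ mass once $R\gtrsim C(\eta')/N(t)$ by (\ref{apms1}); choosing $R$ large relative to $N_0$ makes this term $\le\eta/3$.

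The main obstacle I anticipate is the bookkeeping in the high-frequency regime: one needs Proposition \ref{freqdecayest} to convert ``$\varphi_{>1}$'' into ``$\varphi_{>R}$'' at an arbitrary spatial scale $R$ (not just $R=1$) and at an arbitrary time (not just at the initial time), and to do so uniformly. This is handled by first rescaling by $N(t)$ so that the relevant spatial scale is $\sim 1$, invoking Proposition \ref{freqdecayest} for the rescaled solution (whose data still has charge $\|Q\|_{L^2}$ and whose energy is still positive — energy is not scale-invariant, but positivity is preserved), and then using the mismatch estimates to move between the cutoff at scale $1$ and the cutoff at scale $R$. A secondary technical point is making sure the implicit constants in Proposition \ref{freqdecayest} (which depend on $\phi$) interact correctly with the rescaling; since the rescaled solutions all lie in the precompact orbit, one expects these constants to be uniform, and I would cite the almost-periodicity framework for that. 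Once these reductions are in place, the three-piece split above closes: choose $N_0$ large so the high-frequency tail is $\le\eta/3$, then $R=\tilde C(\eta)$ large so the low/medium pieces are each $\le\eta/3$.
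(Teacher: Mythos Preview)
Your overall strategy --- split $\varphi_{>R}\nabla\phi(t)$ into low and high Littlewood--Paley pieces, control the low piece by mass localization (\ref{apms1}) plus physical-space mismatch, and control the high piece via Proposition \ref{freqdecayest} plus Bernstein --- is exactly the paper's approach. The decomposition and the ingredients you identify are correct.

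However, the rescaling step you propose is an unnecessary detour, driven by a misreading of what Proposition \ref{freqdecayest} actually gives. You worry that invoking $\phi_0\in H^1_m$ would fail to give uniformity along the orbit, and therefore suggest rescaling by $N(t)$ and appealing to precompactness. But look again at the statement of Proposition \ref{freqdecayest}: the bound
\[
\|\varphi_{>1}P_N\phi(t)\|_{L^\infty_t L^2_x([0,\infty)\times\real^2)}\lesssim \|\tilde P_N\phi_0\|_{L^2_x}+N^{-1-\epsilon}
\]
is already an $L^\infty_t$ estimate, with right-hand side depending only on the \emph{fixed} initial datum $\phi_0$, not on $\phi(t)$. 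So summing $N^2\|\tilde P_N\phi_0\|_{L^2}^2$ over $N>N_0$ gives $\|\nabla P_{>N_0/4}\phi_0\|_{L^2}^2$, which tends to $0$ as $N_0\to\infty$ because $\phi_0\in H^1_m$ --- and this is automatically uniform in $t$. No rescaling, no appeal to precompactness of the orbit beyond the mass localization (\ref{apms1}), and no worry about how the implicit constants in Proposition \ref{freqdecayest} behave under rescaling. The passage from the cutoff $\varphi_{>1}$ in Proposition \ref{freqdecayest} to $\varphi_{>R}$ is handled, as you correctly anticipate, by mismatch: since $R=2C(\eta_1)\langle N(t)^{-1}\rangle$ can be taken $\ge 4$, one has $\varphi_{>R/2}=\varphi_{>R/2}\varphi_{>1}$ pointwise, so $\|\varphi_{>R/2}P_N\phi(t)\|_{L^2}\le\|\varphi_{>1}P_N\phi(t)\|_{L^2}$ directly.

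In short: drop the rescaling, use $\phi_0\in H^1_m$ without hesitation, and your argument is the paper's argument.
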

\begin{proof}
   For any $\eta > 0 $ and $t \ge 0$, we will estimate 
   \[\| \varphi_{>R} \nabla \phi(t) \|_{L^2} < \eta \]
    with $R = 2C(\eta_1) \langle N(t)^{-1} \rangle$, where $C$ is the compactness modulus function in Theorem \ref{almostpms} and $\eta_1$ is small enough to be determined. 
    
    First we add two frequency cutoff, with $N_0$ to be determined.
       \[ \| \varphi_{>R} \nabla \phi(t) \|_2 \le \| P_{\le N_0} \varphi_{>R} \nabla \phi(t) \|_2 + \| P_{> N_0} \varphi_{>R} \nabla \phi(t) \|_2. \] 
   For the low-frequency part, we need to make the frequency truncation next to $\nabla$ and apply the charge concentration, through mismatch estimate. And finally control the main term with localization of charge (\ref{apms1}).
   \begin{align*}
  &  \| P_{\le N_0} \varphi_{>R} \nabla \phi(t) \|_2 \\
     \lesssim  &\| P_{\le N_0} \varphi_{>R} \nabla P_{\le 4 N_0} \varphi_{\le \frac{R}{2}} \phi(t) \|_2 + \| P_{\le N_0} \varphi_{>R} \nabla P_{\le 4 N_0}  \varphi_{> \frac{R}{2}} \phi(t) \|_2 \\
     &+ \sum_{N > 4N_0}  \| P_{\le N_0} \varphi_{>R} \nabla P_{N} \phi(t) \|_2  \\
     \lesssim_\phi & N_0^{-1} R^{-2} + N_0 \| \varphi_{>\frac{R}{2}} \phi(t)\|_2 + \sum_{N > 4N_0} N^{-1} R^{-2}  \\
     \lesssim_\phi & N_0^{-1}  + N_0 \eta_1
   \end{align*}
   
   For the high-frequency part, we use frequency decaying estimate Proposition \ref{freqdecayest} and almost orthogonality.
      \begin{align*}
  &   \| P_{> N_0} \varphi_{>R} \nabla \phi(t) \|_2^2 = \sum_{N > N_0}  \| P_N \varphi_{>R} \nabla \phi(t) \|_2^2\\
      = & \sum_{N > N_0}  \Big( \| P_N \varphi_{>R} P_{< \frac{N}{4}} \nabla \phi(t) \|_2^2 +   \| P_N \varphi_{>R} P_{> 4N} \nabla \phi(t) \|_2^2 \\
       &\quad+ \| P_N \varphi_{>R} P_{\frac{N}{4} \le \cdot \le 4N} \nabla \phi(t) \|_2^2\Big) \\
      \lesssim_\phi &  \sum_{N > N_0} N^{-2} R^{-4} +  \sum_{N > \frac{1}{4}N_0} \| \varphi_{>R} \nabla P_N  \phi(t) \|_2^2 \\
      \lesssim_\phi & N_0^{-2} R^{-4} + \sum_{N > \frac{N_0}{4}} \left(  \| \varphi_{>R} \nabla \tilde{P}_N \varphi_{\le \frac{R}{2}} P_N  \phi(t) \|_2^2 +  \| \varphi_{>R} \nabla \tilde{P}_N \varphi_{> \frac{R}{2}} P_N  \phi(t) \|_2^2 \right) \\
      \lesssim_\phi & N_0^{-2} R^{-4} + \sum_{N > \frac{N_0}{4}}\left[ N^{-2} R^{-4} + N^2 \left( N^{-2-2\epsilon} + \| P_N u(0) \|_2^2 \right) \right] \\
      \lesssim_\phi & N_0^{-2}  + N_0^{-2\epsilon} + \| \nabla P_{> \frac{N_0}{4}} u(0) \|_2^2  
   \end{align*}
   Choosing $N_0$ large enough, and then $\eta_1$ small enough, the conclusion is proved.
\end{proof}

Next, we are in place to prove our main theorem. 

\begin{proof}[Proof of Theorem \ref{rigidinfselfdual}]

   Assume that $E[\phi_0] > 0$. Recall the truncated virial identity $V_R(t) := \int_{\real^2} \chi_R |\phi|^2$ with $\chi_R$ is a cutoff for $|x|^2$ for $|x| \le R$, defined in Corollary \ref{trunvirialest}. Then we have (\ref{tvi2}) and a trivial bound
  \begin{align}
    |V_R(t)| &\lesssim_\phi R^2 \label{tvi3}
  \end{align}
  Besides, since $\| \phi \|_{L^2} = \| Q \|_{L^2}$ and $\phi$ blows up as $t \rightarrow +\infty$, $\phi$ is an almost periodic solution according to Theorem \ref{almostpms}, and we can apply Lemma \ref{duhamel} and Theorem \ref{stronglocal}. In the following argument, $N(t)$ and $C(\eta)$ are defined as in Theorem \ref{almostpms}. 
  
  We distinguish two cases: either $N(t)$ bounded from below (i.e. the charge, frequency and kinetic energy concentrates in a bounded area all the time) or converges to zero along a subsequence.
  
  \cu{Case 1. $\inf_{t \ge 0} N(t) > 0.$ }
  
  Let $\eta > 0$ be a small constant chosen later. From localization of charge (Theorem \ref{almostpms}) and kinetic energy (Theorem \ref{stronglocal}), there exists $R = R(\eta) = 2C(\eta) / \left(\inf_t N(t)\right)$ such that
  \beq \| \phi(t)\|_{L^2_x(|x| \ge \frac{R}{2})} +  \| \nabla \phi(t)\|_{L^2_x(|x| \ge \frac{R}{2})} \le 2 \eta \eeq
  for all $t \ge 0$. Then by Gagliardo-Nirenberg inequality,
  \beq  \| \varphi_{\ge \frac{R}{2}} \phi(t) \|_{L_x^{4}} \lesssim_\phi \eta \eeq
  provided $R$ is chosen sufficiently large depending on $\eta$. 
  
  Hence, taking $\eta$ small enough depending on $E[\phi_0]$ and $R$ correspondingly large, the residual in (\ref{tvi2}) will be small enough. We obtain
  \[ V_R''(t) \ge 8 E[\phi_0] > 0 \]
  for all $t \in [0, \infty)$, thus contradicting (\ref{tvi3}) for $t$ large enough.
  
  \cu{Case 2.} $\liminf_{t \rightarrow \infty} N(t) = 0.$ 
  
  In this case, we are not able to choose a fixed $R$ to guarantee uniform concentration. 
   So the idea here is to consider the contradiction on the  speed of divergence, due to the a priori bound (\ref{apms3}). 
  
  Choose the time sequence $t_n \nearrow \infty$ such that $N(t_n) \searrow 0$ and 
  \beq N(t_n) = \min_{0 \le t \le t_n} N(t). \eeq
  Again, let $\eta > 0$ be a small constant and $R_n = 2C(\eta) / N(t_n)$, then 
  \beq \| \phi(t)\|_{L^2_x(|x| \ge \frac{R_n}{2})} +  \| \nabla \phi(t)\|_{L^2_x(|x| \ge \frac{R_n}{2})} + \| \phi(t) \|_{L^{4/d+2}_x(|x| \ge \frac{R_n}{2})} \lesssim_\phi \eta \eeq
  for all $t \in [0, t_n]$. Thus similarly, an $\eta$ small enough depending on $E[\phi_0]$ (but independent of $n$) implies
    \beq V_{R_n}''(t) \ge 8 E[\phi_0] > 0 \label{bulabula}\eeq
  for all $t \in [0, t_n)$.
  
  On the other hand, from the differential inequality for truncated virial (\ref{virialode}) and (\ref{tvi3}), we have
  \beq  |V_{R_n}'(t)| \lesssim_{E(\phi_0)} (V_{R_n}(t))^{1/2} \lesssim_\phi R_n \eeq
  for all $t \in [0, t_n]$. Thus using Fundamental Theorem of Calculus and (\ref{bulabula}), we obtain
  \[ E[\phi_0] t_n \le |V_{R_n}'(0)| +|V_{R_n}'(t_n)| \lesssim_\phi R_n \lesssim_\phi N(t_n)^{-1} \lesssim_\phi t_n^{1/2}. \]
  Letting $n \rightarrow \infty$, we reach a contradiction since $t_n \rightarrow \infty$.

  This finishes the proof of Theorem \ref{rigidinfselfdual}.
  
\end{proof}

\subsection{Weak localization of kinetic energy}\label{4.3}

Now the only goal for the rest of this paper is to prove Proposition \ref{freqdecayest}. To achieve that, we need quantities controlling the solution uniformly for all time, and just the charge conservation is not enough in dimension 2. So we will prove the following uniform $\dot{H}^1$ boundedness property in this subsection, as a preparation for proving Proposition \ref{freqdecayest}.

\begin{prop}[Weak localization of kinetic energy]\label{weaklocal}
  For $m \ge 0$, any $\phi \in H^1_m$, $\|\phi\|_{L^2} = \|\phi^{(m)} \|_{L^2}$ ($\phi^{(m)}$ be the m-equivariant soliton) and $E[\phi] > 0$, then for all $c > 0$,
  \beq \| \varphi_{>c} \nabla \phi \|_{L^2} \lesssim_{c, E[\phi], m} 1 .\eeq
\end{prop}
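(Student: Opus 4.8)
The plan is to run an indirect argument combined with a concentration-compactness extraction, exploiting the variational characterization of the ground state (Proposition~\ref{varcharselfdual}) and the rigidity of normalized blowup sequences (Proposition~\ref{rigidseqselfdual}, or equivalently Corollary~\ref{rigidqual}). Suppose the conclusion fails: then there is a fixed $c>0$, an energy level $E_0>0$, and a sequence $\phi_n \in H^1_m$ with $\|\phi_n\|_{L^2}=\|\phi^{(m)}\|_{L^2}$, $E[\phi_n]=E_0$ (we may even assume the energy converges to some finite $E_0>0$ after passing to a subsequence, rescaling if needed), but $\|\varphi_{>c}\nabla\phi_n\|_{L^2}\to\infty$. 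In particular $\|\phi_n\|_{\dot H^1_m}\to\infty$, so by Lemma~\ref{equilem1} also $\|\bm{D}_x\phi_n\|_{L^2}\to\infty$. Since $E[\phi_n]=\tfrac12\|\bm{D}_x\phi_n\|_{L^2}^2-\tfrac14\|\phi_n\|_{L^4}^4$ stays bounded, we get $\|\phi_n\|_{L^4}^4 \sim 2\|\bm{D}_x\phi_n\|_{L^2}^2 \to\infty$ as well.

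Next I would renormalize to concentrate at a fixed scale: set $\rho_n = \|\phi^{(m)}\|_{\dot H^1_m}/\|\phi_n\|_{\dot H^1_m}\to 0$ and $v_n(x) = \rho_n \phi_n(\rho_n x)$, so that $\|v_n\|_{L^2}=\|\phi^{(m)}\|_{L^2}$, $\|v_n\|_{\dot H^1_m}=\|\phi^{(m)}\|_{\dot H^1_m}$, and $E[v_n]=\rho_n^2 E_0 \to 0$ by the scaling law~\eqref{energyscaling}. Then $\{v_n\}$ satisfies exactly the hypotheses of Proposition~\ref{rigidseqselfdual}, so after passing to a subsequence $v_n \to e^{i\gamma}\phi^{(m)}$ strongly in $H^1$. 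Strong $H^1$ convergence implies strong $\dot H^1_m$ convergence, and in particular $\|\varphi_{>c'}\nabla v_n\|_{L^2}$ is uniformly bounded for every $c'>0$. Unwinding the rescaling, $\varphi_{>c}\nabla\phi_n(x)$ corresponds (up to the $\rho_n$ Jacobian factors) to $\rho_n^{-1}\varphi_{>c/\rho_n}\nabla v_n(\cdot/\rho_n)$, whose $L^2$ norm is $\rho_n^{-1}\|\varphi_{>c/\rho_n}\nabla v_n\|_{L^2}$. Since $c/\rho_n\to\infty$ and $\nabla v_n \to \nabla(e^{i\gamma}\phi^{(m)})$ in $L^2$, and $\phi^{(m)}=Qe^{im\theta}$ has $\nabla\phi^{(m)}\in L^2$ with $\|\varphi_{>c/\rho_n}\nabla\phi^{(m)}\|_{L^2}\to 0$ by dominated convergence, we obtain $\|\varphi_{>c/\rho_n}\nabla v_n\|_{L^2}\to 0$. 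This is the crux: the blown-up norm $\rho_n^{-1}$ is multiplied by a factor going to $0$, and one must check the product also tends to $0$ (or at least stays bounded) — here one uses that $Q$ decays polynomially like $r^{-m-2}$ at infinity (from~\eqref{solitonq}), so $\|\varphi_{>L}\nabla Q\|_{L^2}$ decays like a fixed negative power of $L$, which beats $\rho_n^{-1}=L/c$ when $L=c/\rho_n\to\infty$ provided that decay power exceeds $1/2$; and $\|\partial_r Q\|_{L^2(r>L)}^2 \lesssim \int_L^\infty r^{-2m-6}\cdot r\,dr \sim L^{-2m-4}$, comfortably summable. For $m=0$ one checks the same decay rate $Q\sim r^{-2}$ directly. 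Therefore $\|\varphi_{>c}\nabla\phi_n\|_{L^2}$ stays bounded, contradicting our assumption.

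The main obstacle is precisely this last bookkeeping step: reconciling the two competing scales, the blowup rate $\rho_n^{-1}\to\infty$ coming from the renormalization and the spatial-tail smallness $\|\varphi_{>c/\rho_n}\nabla\phi^{(m)}\|_{L^2}\to 0$ coming from the polynomial decay of $Q$. It is essential that $Q$'s gradient has enough decay — schematically $\|\varphi_{>L}\nabla\phi^{(m)}\|_{L^2}=o(L^{-1})$ — which holds because $Q(r)\sim r^{-m-2}$ and $\partial_r Q\sim r^{-m-3}$, giving tail $L^2$-norm $\lesssim L^{-m-2}$, indeed $o(L^{-1})$ for every $m\ge 0$. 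One must also be slightly careful that the passage from strong $H^1$ convergence $v_n\to e^{i\gamma}\phi^{(m)}$ to the estimate on spatial tails at the moving radius $c/\rho_n$ is legitimate; this is handled by splitting $\varphi_{>c/\rho_n}\nabla v_n = \varphi_{>c/\rho_n}\nabla(v_n-e^{i\gamma}\phi^{(m)}) + \varphi_{>c/\rho_n}\nabla(e^{i\gamma}\phi^{(m)})$, bounding the first term by $\|\nabla(v_n-e^{i\gamma}\phi^{(m)})\|_{L^2}\to 0$ uniformly in the radius and the second by the dominated-convergence tail estimate above. Everything else is a routine application of the already-established scaling identities and Proposition~\ref{rigidseqselfdual}.
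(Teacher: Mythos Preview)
Your contradiction argument has a genuine gap at the very point you flag as ``the crux.'' After rescaling you obtain
\[
\|\varphi_{>c}\nabla\phi_n\|_{L^2}=\rho_n^{-1}\|\varphi_{>c/\rho_n}\nabla v_n\|_{L^2},
\]
and you split $\nabla v_n=\nabla(v_n-e^{i\gamma}\phi^{(m)})+\nabla(e^{i\gamma}\phi^{(m)})$. For the soliton piece your tail computation is fine: $\rho_n^{-1}\|\varphi_{>c/\rho_n}\nabla\phi^{(m)}\|_{L^2}\lesssim \rho_n^{-1}(c/\rho_n)^{-(m+2)}\to 0$. But for the error piece you only know, from Proposition~\ref{rigidseqselfdual}, that $\|\nabla(v_n-e^{i\gamma}\phi^{(m)})\|_{L^2}=o(1)$; multiplying by $\rho_n^{-1}\to\infty$ gives an indeterminate product. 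Nothing in Proposition~\ref{rigidseqselfdual} or Corollary~\ref{rigidqual} supplies a rate of convergence, so you cannot conclude boundedness of $\rho_n^{-1}\|\nabla(v_n-e^{i\gamma}\phi^{(m)})\|_{L^2}$, and the contradiction does not close.

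This is exactly why the paper develops the quantitative rigidity Lemma~\ref{rigidquan}: via the self-dual factorization $\mathcal{L}_Q=L_Q^*L_Q$, the coercivity of $L_Q$ under the orthogonality conditions (Lemma~\ref{coercivity}), and the Taylor expansion Lemma~\ref{taylor}, one obtains $\|\tilde\epsilon\|_{\dot H^1_m}\le K\sqrt{E[\tilde\phi]}=K\rho_n\sqrt{E_0}$ rather than merely $o(1)$. That $O(\rho_n)$ rate is precisely what beats the $\rho_n^{-1}$ blowup and makes the decomposition in Proposition~\ref{nonsharpdecomp} work. Your compactness route would become correct if you inserted such a quantitative estimate, but then you are essentially reproducing the paper's modulation argument; the purely qualitative input from Proposition~\ref{rigidseqselfdual} is not enough.
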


\begin{rmk}
  Due to conservation of energy, this Theorem indicates that $\nabla \phi(t)$ cannot be large in region away from the origin. So it can also be regarded as a weaker localization of kinetic energy, compared with the strong one Theorem \ref{stronglocal}.
\end{rmk}

To begin with, we need some spectral analysis of linearized equation around soliton.

\subsubsection{Linearization of (\ref{CSS}) at the soliton}

  In this subsubsection, we  consider the linearization of (\ref{CSS}) at the m-equivariant soliton $\phi^{(m)}=e^{im\theta}Q$, to see that the linearized operator can be written as a self-dual form
  \beq \mathcal{L}_Q = L_Q^* L_Q. \eeq
Namely, if we consider the $u$-evolution formulation (\ref{uCSS}) for (\ref{CSS}),  and denote $u = Q + \epsilon$,  then (\ref{uCSS}) is equivalent to 
\[ i \partial_t \epsilon - L_Q^* L_Q \epsilon = \text{(h.o.t.)}. \]
And then we record its spectral properties, the analysis of which largely depends this self-dual structure. Most of these results appear in \cite{kim2019css}, so we omit their proof.

We begin with linearization of Bogomol'nyi operator
  \begin{align*}
    \bm{D}_+^{(u)} &:= \partial_r - \frac{1}{r} (m + A_\theta[u]), \\
    \bm{D}_+^{(u)*} &:= -\partial_r - \frac{1}{r} (1+m + A_\theta[u]).
  \end{align*}
  Equation (\ref{uCSS}) can be written as a Hamiltonian equation
  \[ \partial_t u = - i \frac{\delta E}{\delta u} = -i \frac{\delta}{\delta u}\left( \frac{1}{2} \int |\bm{D}^{(u)}_+ u |^2 \right). \]
Corresponding to the quadrature structure of basic nonlinearity $A_\theta[u]$, we define the multiplication operator and its adjoint.\footnote{We remark that the operator as well as $A_\theta$ is only $\real$-linear, rather than $\cpx$-linear. So all the adjoint afterwards are viewed as in $\real$-Hilbert space $L^2(\real^2; \cpx)$ equipped with the inner product $(u, v)_r = \int \text{Re}(u\bar{v})$.}
\begin{align*} B_f g :=& \frac{1}{r} \int_0^r \text{Re} (\bar{f} g) r' dr'. \\
B_f^* g =& f \int^\infty_r (\text{Re} g) dr'. 
\end{align*}
With that, we can represent those nonlinearity appearing in (\ref{CSS})
\begin{align*}
  A_\theta[u]u &= -\frac{1}{2}r (B_u u)u,\\
  A_0[u]u &= - B^*_u\left[\frac{m}{r} |u|^2 - \frac{1}{2} |u|^2 B_u u \right].
\end{align*}
Then assuming the decomposition at arbitrary profile $\omega$
\[ u = \omega + \epsilon,\]
we can further decompose the operator
\begin{align*}
  \bm{D}^{(u)}_+ &= \bm{D}^{(\omega)}_+ + (B_\omega \epsilon) + \frac{1}{2} (B_\epsilon \epsilon),\\
  \bm{D}^{(u)*}_+ &= \bm{D}^{(\omega)*}_+ + (B_\omega \epsilon) + \frac{1}{2} (B_\epsilon \epsilon).
\end{align*}
And hence 
\begin{align}
  \bm{D}^{(u)}_+ u = \bm{D}^{(\omega)}_+ u + L_\omega \epsilon + N_\omega [\epsilon], \label{lindecomp}
\end{align}
where the linear part $L_\omega$ and nonlinear part $N_\omega[\epsilon]$ are respectively
\begin{align*}
  L_\omega &:= \bm{D}^{(\omega)}_+ + \omega B_\omega, \\
  N_\omega[\epsilon] &:= \epsilon B_\omega \epsilon + \frac{1}{2} \omega B_\epsilon \epsilon + \frac{1}{2}\epsilon B_\epsilon \epsilon.
\end{align*}
And the real adjoint of $L_\omega$ is 
\[ L^*_\omega f = \bm{D}_+^{(\omega)*} f + B^*_\omega (\bar{\omega} f). \]
In particular, when $\omega = Q$, using the self-dual relation $D^{(Q)}_+ Q = 0$, we have
\begin{lem}[(\ref{CSS}) in the self-dual form, \cite{kim2019css}]\label{CSSlinear}
    Using the previous notation, the self-dual (\ref{CSS}) under equivariant assumption is equivalent to 
    \beq i \partial_t u = L_u^* \bm{D}_+^{(u)} u. \eeq
    Moreover, suppose $u = Q + \epsilon$, it's equivalent to the linearized equation
  \beq
 \begin{split}
    i \partial_t \epsilon - \mathcal{L}_Q \epsilon =& L^*_Q N_Q[\epsilon] + \left[ (B_Q \epsilon) + B^*_Q [\bar{\epsilon} \cdot] + B^*_\epsilon [Q \cdot] \right]\left[L_Q \epsilon + N_Q [\epsilon]\right] \\
    &+ \left[\frac{1}{2} (B_\epsilon \epsilon) + B^*_\epsilon [\bar{\epsilon} \cdot] \right]\left[L_Q \epsilon + N_Q [\epsilon]\right],
  \end{split}
\eeq
where the linearized operator is 
 \beq \mathcal{L}_Q = L_Q^* L_Q. \eeq
\end{lem}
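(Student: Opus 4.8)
The lemma is a consequence of the Hamiltonian structure noted just above together with the operator decomposition (\ref{lindecomp}). Recall that in the self-dual case the $u$-evolution (\ref{uCSS}) is the Hamiltonian flow $i\partial_t u = \delta E/\delta u$ for $E[u]=\tfrac12\|\bm{D}_+^{(u)}u\|_{L^2}^2$, the gradient being taken with respect to the real inner product $(u,v)_r=\int\mathrm{Re}(u\bar v)$. So the only substantive point is the identity $\delta E/\delta u = L_u^*\bm{D}_+^{(u)}u$. I would obtain it by taking a real variation $u\mapsto u+sh$ with $h\in H^1_m$, viewing $u+sh$ as the perturbation $\epsilon=sh$ of the base profile $\omega=u$, and applying (\ref{lindecomp}): $\bm{D}_+^{(u+sh)}(u+sh)=\bm{D}_+^{(u)}u+sL_uh+N_u[sh]$. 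Since $N_u[sh]=O(s^2)$, differentiating at $s=0$ gives $L_uh$, hence $\tfrac{d}{ds}\big|_{0}E[u+sh]=(\bm{D}_+^{(u)}u,L_uh)_r=(L_u^*\bm{D}_+^{(u)}u,h)_r$, using only the definition of the real adjoint $L_u^*=\bm{D}_+^{(u)*}+B_u^*[\bar u\,\cdot]$ (which is itself checked by one integration by parts for $\bm{D}_+^{(u)*}$ and one application of Fubini for $B_u^*$, via $B_f^*g=f\int_r^\infty\mathrm{Re}(g)\,dr'$). As $h$ is arbitrary this yields $i\partial_t u=L_u^*\bm{D}_+^{(u)}u$, which is the first assertion. (If one wishes to bypass the Hamiltonian reduction, the same identity can be verified by expanding $\bm{D}_+^{(u)*}\bm{D}_+^{(u)}u$ and $B_u^*[\bar u\,\bm{D}_+^{(u)}u]$ by hand, using $\partial_r A_\theta[u]=-\tfrac12 r|u|^2$ and the representation $A_0[u]u=-B_u^*[\tfrac mr|u|^2-\tfrac12|u|^2B_uu]$, and matching the resulting local and nonlocal terms with the right-hand side of (\ref{uCSS}) at $g=1$.)

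For the linearized equation I would substitute $u=Q+\epsilon$. Since $Q$ is static, $i\partial_t u=i\partial_t\epsilon$, and taking $\omega=Q$, $\epsilon=u-Q$ in (\ref{lindecomp}) and using the self-dual relation $\bm{D}_+^{(Q)}Q=0$ gives $\bm{D}_+^{(u)}u=L_Q\epsilon+N_Q[\epsilon]$. On the adjoint side, from $A_\theta[Q+\epsilon]=A_\theta[Q]-rB_Q\epsilon-\tfrac r2 B_\epsilon\epsilon$ one reads off $\bm{D}_+^{(u)*}=\bm{D}_+^{(Q)*}+(B_Q\epsilon)+\tfrac12(B_\epsilon\epsilon)$ as multiplication operators, while the $\real$-linearity of $B$ in its subscript together with $\bar Q=Q$ gives $B_u^*[\bar u\,\cdot]=B_Q^*[\bar Q\,\cdot]+B_Q^*[\bar\epsilon\,\cdot]+B_\epsilon^*[Q\,\cdot]+B_\epsilon^*[\bar\epsilon\,\cdot]$. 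Thus $L_u^*=L_Q^*+\mathcal{R}_1+\mathcal{R}_2$, where $\mathcal{R}_1=(B_Q\epsilon)+B_Q^*[\bar\epsilon\,\cdot]+B_\epsilon^*[Q\,\cdot]$ is linear in $\epsilon$ and $\mathcal{R}_2=\tfrac12(B_\epsilon\epsilon)+B_\epsilon^*[\bar\epsilon\,\cdot]$ is quadratic. Composing with $\bm{D}_+^{(u)}u=L_Q\epsilon+N_Q[\epsilon]$, every term of $L_u^*\bm{D}_+^{(u)}u$ other than $L_Q^*L_Q\epsilon$ is at least quadratic in $\epsilon$ (recall $N_Q[\epsilon]=\epsilon B_Q\epsilon+\tfrac12 Q B_\epsilon\epsilon+\tfrac12\epsilon B_\epsilon\epsilon$), namely $L_Q^*N_Q[\epsilon]+(\mathcal{R}_1+\mathcal{R}_2)(L_Q\epsilon+N_Q[\epsilon])$. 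Setting $\mathcal{L}_Q:=L_Q^*L_Q$ and moving $\mathcal{L}_Q\epsilon$ to the left reproduces exactly the displayed linearized equation, the two bracketed operators there being $\mathcal{R}_1$ and $\mathcal{R}_2$.

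The computation involves no serious obstacle beyond careful bookkeeping; the one conceptual point is the adjoint identity $L_\omega^*=\bm{D}_+^{(\omega)*}+B_\omega^*[\bar\omega\,\cdot]$. Its extra summand $B_\omega^*[\bar\omega\,\cdot]$ is precisely the contribution of the \emph{nonlocal} part of the dynamics: varying the constraint $A_\theta[u]$ (and hence $A_0[u]$) with respect to $u$ produces a term that, thanks to the self-dual (complete-square) structure of the energy, assembles exactly into this extra factor multiplying $\bm{D}_+^{(u)}u$. Once this is in hand, the substitution $u=Q+\epsilon$ and the sorting of terms by degree in $\epsilon$ are routine.
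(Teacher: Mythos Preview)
Your proposal is correct. The paper does not actually prove this lemma: it is attributed to \cite{kim2019css} and the surrounding text states that the proofs are omitted. What the paper does supply is all the preparatory identities you use---the Hamiltonian form $\partial_t u=-i\,\delta E/\delta u$, the decomposition (\ref{lindecomp}), the expressions for $\bm{D}_+^{(u)*}$, $L_\omega$, $L_\omega^*$, $N_\omega$, and the adjoint formulas for $B_f$, $B_f^*$---and your argument is exactly the computation these identities are designed to reduce the lemma to. In particular, your derivation of $\delta E/\delta u=L_u^*\bm{D}_+^{(u)}u$ via (\ref{lindecomp}) applied at $\omega=u$, and your splitting $L_u^*=L_Q^*+\mathcal{R}_1+\mathcal{R}_2$ on the adjoint side, match the intended route precisely.
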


Differentiating symmetries of (\ref{CSS}) at the static soliton $Q$, we obtain explicit algebraic identities satisfied by $\mathcal{L}_Q$. From phase and scaling symmetries, we have
\begin{align*}
  \mathcal{L}_Q [i Q] = 0, \qquad \mathcal{L}_Q [\Lambda Q] = 0.
\end{align*}
where $\Lambda$ is the generator for $L^2$-scaling
\beq \Lambda f:= \frac{d}{d\lambda}\Big|_{\lambda = 1} \lambda f(\lambda \cdot) = [1 + r\partial_r] f. \eeq
One can indeed see that $iQ$ and $\Lambda Q$ span the kernel of $L_Q$. And the coercivity of $\mathcal{L}_Q$ follows from the factorization $\mathcal{L}_Q = L^*_Q L_Q$. 
\begin{lem}[Kernel of $L_Q$, \cite{kim2019css}]
  If $f(r) e^{im\theta}$ is a smooth m-equivariant function such that $L_Q f = 0$, then $f \in \text{span}_\real \{ iQ, \Lambda Q\}$. 
\end{lem}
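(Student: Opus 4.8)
The plan is to exploit the fact that $Q$ is real-valued and that the coefficients of $L_Q = \bm{D}^{(Q)}_+ + Q B_Q$ are real, which makes the real and imaginary parts of the equation $L_Q f = 0$ decouple. Write $f = f_1 + i f_2$ with $f_1, f_2$ real radial functions. Since $\bar Q = Q$, one has $B_Q g = \frac{1}{r}\int_0^r Q\,\mathrm{Re}(g)\,r'\,dr'$, so $B_Q f = B_Q f_1$ is real, $Q B_Q f$ is real, and $\bm{D}^{(Q)}_+$ (having real coefficients) preserves the real/imaginary decomposition. Hence $L_Q f = 0$ is equivalent to the decoupled system
\[ \bm{D}^{(Q)}_+ f_2 = 0, \qquad L_Q f_1 = \bm{D}^{(Q)}_+ f_1 + Q B_Q f_1 = 0 . \]

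The imaginary part is immediate: $\bm{D}^{(Q)}_+ f_2 = 0$ is a first-order linear ODE on $(0,\infty)$, hence has a one-dimensional solution space, and the self-dual identity $\bm{D}^{(Q)}_+ Q = 0$ exhibits $Q$ as a solution; therefore $f_2 = cQ$ and $i f_2 \in \mathrm{span}_\real\{iQ\}$. For the real part I would set $f_1 = Q h$ on $(0,\infty)$, which is legitimate since $Q > 0$ there. Using $\partial_r Q = \frac{m + A_\theta[Q]}{r} Q$ one gets $\bm{D}^{(Q)}_+(Qh) = Q\,\partial_r h$ and $B_Q(Qh) = \frac{1}{r}\int_0^r Q^2 h\, r'\, dr'$, so $L_Q f_1 = 0$ becomes $r\,\partial_r h = -\int_0^r Q^2 h\, r'\, dr'$. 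Differentiating in $r$ yields the second-order ODE $\partial_r^2 h + \frac{1}{r}\partial_r h + Q^2 h = 0$, whose solution space on $(0,\infty)$ is two-dimensional.

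Now $r = 0$ is a regular singular point of this equation with indicial equation $\rho^2 = 0$ (uniformly in $m \ge 0$, since $r^2 Q^2 \to 0$ as $r \to 0$), so one independent solution extends analytically across $r = 0$ while the other carries a $\log r$ singularity. Since $f$ is a smooth $m$-equivariant function and, by the explicit formula for $Q^{(m)}$ in (\ref{solitonq}), $Q(r)/r^m$ is smooth and non-vanishing at the origin, the quotient $h = f_1/Q$ extends smoothly to $r = 0$; hence $h$ must be a multiple of the analytic branch, and in particular $r\,\partial_r h \to 0$ as $r \to 0$, so that reversing the differentiation step introduces no spurious constant. Finally, $\Lambda Q = (1 + m + A_\theta[Q])Q$ is a nonzero real solution of $L_Q f_1 = 0$ — this follows from $\mathcal{L}_Q[\Lambda Q] = 0$, $\mathcal{L}_Q = L_Q^* L_Q$, and positivity of $(\cdot,\cdot)_r$ — and $\Lambda Q / Q = 1 + m + A_\theta[Q]$ is smooth at $0$ with value $1 + m \neq 0$, so it spans the analytic branch. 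Therefore $f_1 = c'\,\Lambda Q$, and combining, $f = c'\Lambda Q + i c Q \in \mathrm{span}_\real\{iQ, \Lambda Q\}$.

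The main obstacle is this last analysis: pinning down, among the two-dimensional solution space of the reduced ODE, exactly those solutions corresponding to smooth $m$-equivariant $f$. The key inputs are the Frobenius classification at the regular singular point $r = 0$ (to exclude the $\log$-branch) and the verification that $\Lambda Q / Q$ is the analytic branch; a minor technical point to treat carefully is the division by $Q$, which vanishes at the origin for $m \ge 1$ but is harmless on $(0,\infty)$, with all conclusions extended to $r = 0$ by the smoothness of $f$ and of $Q(r)/r^m$.
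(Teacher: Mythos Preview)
The paper does not give its own proof of this lemma: it is stated with attribution to \cite{kim2019css}, and the surrounding text explicitly says ``Most of these results appear in \cite{kim2019css}, so we omit their proof.'' There is therefore nothing in the paper to compare your argument against.

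That said, your argument is correct and self-contained. The real/imaginary splitting via the $\real$-linearity of $B_Q$ is the right first move; the reduction of the real part to $h'' + r^{-1}h' + Q^2 h = 0$ via $f_1 = Qh$ and the self-dual identity $\bm{D}^{(Q)}_+ Q = 0$ is clean; and your Frobenius analysis (double indicial root $\rho = 0$, hence one analytic and one logarithmic branch) correctly singles out the one-dimensional family compatible with smoothness of $f$. Your handling of the two technical points --- recovering the integro-differential equation from the differentiated ODE (checking $r\partial_r h \to 0$), and justifying the division by $Q$ for $m \ge 1$ by passing to $(f_1/r^m)/(Q/r^m)$ --- is adequate. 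The identification $\Lambda Q/Q = 1 + m + A_\theta[Q]$, nonzero at the origin, pins down the analytic branch as claimed.
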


\begin{lem}[Coercivity of $\mathcal{L}_Q$, \cite{kim2019css}]\label{coercivity}
  Let $m \ge 1$, we have\footnote{Recall the notation in \S \ref{sec2.1} that $\| u \|_{\dot{H}^1_m} :=\| u e^{im\theta}\|_{\dot{H}^1}$.}
  \begin{align}
   \| L_Q u \|_{L^2} &\lesssim \| u\|_{\dot{H}^1_m},\qquad \forall\, ue^{im\theta} \in \dot{H}^1_m, \label{coer1}\\
   \| L_Q u \|_{L^2} &\gtrsim \| u\|_{\dot{H}^1_m}, \qquad \forall\, ue^{im\theta} \in \dot{H}^1_m, \,\, (u, iQ)_r = (u, \Lambda Q)_r = 0. \label{coer2}
   \end{align}
   And in case of $m=0$, (\ref{coer1}) and (\ref{coer2}) are still true if we replace $\dot{H}^1_m$ by
   \beq \|u \|_{\dot{\mathcal{H}}_0}^2 := \|\partial_r u\|_{L^2}^2 + \| (1+r)^{-1} u\|_{L^2}^2. \label{coer3}\eeq
\end{lem}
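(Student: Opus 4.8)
The plan is to establish the two inequalities separately, using the factorisation $\mathcal{L}_Q=L_Q^{*}L_Q$ from Lemma~\ref{CSSlinear} so that $\|L_Qu\|_{L^2}^2=\langle\mathcal{L}_Qu,u\rangle$: the upper bound~\eqref{coer1} by a direct estimate on the coefficients of $L_Q$, and the lower bound~\eqref{coer2} by a concentration--compactness argument built on the characterisation of $\ker L_Q$.

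\textbf{Upper bound.} I would write $L_Qu=\bm{D}_+^{(Q)}u+QB_Qu=\partial_r u-\tfrac{m+A_\theta[Q]}{r}\,u+QB_Qu$ and estimate each term in $L^2$. The pair $\partial_r u$, $\tfrac{m}{r}u$ has total norm exactly $\|u\|_{\dot H^1_m}$. For the rest one uses that $A_\theta[Q](r)=-\tfrac12\int_0^rQ^2 s\,ds$ is bounded on $(0,\infty)$ and is $O(r^{2(m+1)})$ as $r\to0$, so that $\bigl|\tfrac{A_\theta[Q](r)}{r}\bigr|\lesssim\min\{r^{2m+1},r^{-1}\}$ and hence $\bigl\|\tfrac{A_\theta[Q]}{r}u\bigr\|_{L^2}\lesssim\|u\|_{L^2(\{r<1\})}+\bigl\|\tfrac1r u\bigr\|_{L^2(\{r>1\})}$; this is $\lesssim\|u\|_{\dot H^1_m}$ for $m\ge1$ by the Hardy bound~\eqref{lem311}, and $\lesssim\|u\|_{\dot{\mathcal{H}}_0}$ for $m=0$ because $(1+r)^{-1}$ dominates $1$ on $\{r<1\}$ and $r^{-1}$ on $\{r>1\}$ --- which is precisely why the modified norm~\eqref{coer3} is forced when $m=0$. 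Finally $\|QB_Qu\|_{L^2}\lesssim\|u\|_{\dot H^1_m}$ (resp.\ $\dot{\mathcal{H}}_0$) follows from Cauchy--Schwarz inside the definition of $B_Q$, the elementary bound $\|u\|_{L^2(B_r)}\lesssim(1+r)\|u\|_{\dot H^1_m}$ for equivariant functions, and the fast polynomial decay of $Q$.

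\textbf{Lower bound.} Here I would argue by contradiction, assuming a sequence $u_n$ with $\|u_n\|_{\dot H^1_m}=1$ (finite $\dot{\mathcal{H}}_0$-norm if $m=0$), $(u_n,iQ)_r=(u_n,\Lambda Q)_r=0$, and $\|L_Qu_n\|_{L^2}\to0$, and passing to a subsequence with $u_n\rightharpoonup u_\infty$. Since $L_Q$ is weak-to-weak continuous --- the differential part as in Lemma~\ref{equilem2}, and $u\mapsto QB_Qu$ in fact compact $\dot H^1_m\to L^2$ because its kernel $\tfrac{Q(r)Q(s)}{r}\mathbf{1}_{\{s<r\}}$ decays fast in both variables --- we get $L_Qu_\infty=0$, so $u_\infty\in\text{span}_\real\{iQ,\Lambda Q\}$ by the preceding lemma on $\ker L_Q$. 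One then rules out loss of mass at spatial infinity: the kernel directions $iQ,\Lambda Q$ are localised near $r\sim1$, so a profile concentrating at $|x|\to\infty$ stays bounded away from their span and contributes an $n$-independent amount to $\|L_Qu_n\|_{L^2}$; hence $\{u_n\}$ is tight, converges strongly in $L^2_{\mathrm{loc}}$, the orthogonality passes to the limit (the test functions decay fast), and $u_\infty=0$. Combining tightness, $\|L_Qu_n\|_{L^2}\to0$, and the Hardy control near the origin then forces $\|u_n\|_{\dot H^1_m}\to0$, contradicting the normalisation.

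\textbf{The main obstacle.} The delicate point is controlling the genuinely nonlocal pieces of $\mathcal{L}_Q$ --- those inherited from the quadrature $A_\theta[u]$, with no analogue in the~\eqref{NLS} theory --- and excluding the loss of compactness at spatial infinity in the lower bound; both rest on the polynomial decay of $Q$ and $A_\theta[Q]$ together with Hardy-type control of equivariant functions. The case $m=0$ is the most troublesome, since $\dot{\mathcal{H}}_0$ embeds in neither $L^2$ nor $L^\infty_{\mathrm{loc}}$, so one must carry the weighted norm~\eqref{coer3} carefully through every step, both near the origin and at infinity. This is exactly the spectral analysis performed in~\cite{kim2019css}, which is why the present paper only quotes the statement.
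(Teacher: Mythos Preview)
The paper does not prove this lemma; it is quoted from \cite{kim2019css}, and the surrounding text explicitly says ``Most of these results appear in \cite{kim2019css}, so we omit their proof.'' You correctly note this in your closing paragraph, so there is no in-paper proof to compare against.

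Your sketch is a reasonable outline. The upper bound is essentially complete as written. For the lower bound, the contradiction--compactness scheme is standard, but the step you flag as delicate --- excluding loss at spatial infinity --- should be made quantitative rather than left qualitative: since $A_\theta[Q](r)\to -2(m+1)$ and $Q(r)\to 0$ as $r\to\infty$, one has $L_Q\approx \partial_r+\tfrac{m+2}{r}$ on $\{r\gg 1\}$, and an integration by parts shows this piece alone controls $\|\partial_r u\|_{L^2}^2+\|\tfrac{1}{r}u\|_{L^2}^2$ there. Without this explicit gain, the passage from $u_\infty=0$ back to $\|u_n\|_{\dot H^1_m}\to 0$ does not close. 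For $m=0$ you are right that $iQ,\Lambda Q\notin(\dot{\mathcal{H}}_0)^*$ --- the paper's own Remark after the lemma says exactly this --- so the orthogonality cannot be passed to the limit by weak convergence alone; your proposed order of operations (establish tightness first, then use strong $L^2_{\mathrm{loc}}$ convergence to pass the inner products) is correct, but it is precisely the point where a careless argument fails, so it deserves to be spelled out rather than folded into a parenthetical.
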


\begin{rmk}When $m \ge 1$, we have $iQ, \Lambda Q \in (\dot{H}^1_m)^*$, so the inner products in (\ref{coer2}) are defined naturally, while for $m = 0$, $iQ, \Lambda Q \notin (\dot{\mathcal{H}}_0)^*$ makes these inner products risky. Despite this, we use this lemma later only for $u \in H^1$ (see Lemma \ref{rigidquan}). Then it's not a problem since $Q \in L^2$ for all $m \ge 0$.\end{rmk}


With all the information on  $\mathcal{L}_Q$, we are able to compute the leading term in $E[Q + \cdot]$.
\begin{lem}\label{taylor}
    For $m \ge 1$, $\epsilon e^{im\theta} \in H^1_m$, and $Qe^{im\theta}$ be the static soliton, we have
    \beq \left| 2E\left[(Q + \epsilon)e^{im\theta}\right] - \| L_Q \epsilon\|_{L^2}^2\right| \lesssim_m \| \epsilon \|_{\dot{H}^1_m}^2 \left( \sum_{k=1}^4 \| \epsilon\|_{H^1_m}^k \right).\label{qcn} \eeq
    And for $m = 0$, (\ref{qcn}) still holds if we replace $\dot{H}^1_0$ by $\dot{\mathcal{H}}_0$ defined in (\ref{coer3}).
\end{lem}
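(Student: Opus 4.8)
The plan is to compute $2E[(Q+\epsilon)e^{im\theta}]$ exactly from the self-dual structure. Since $g=1$, (\ref{squareenergy}) gives $2E[(Q+\epsilon)e^{im\theta}]=\|\bm{D}_+^{(Q+\epsilon)}(Q+\epsilon)\|_{L^2}^2$, and expanding $\bm{D}_+^{(Q+\epsilon)}(Q+\epsilon)$ as in (\ref{lindecomp}) at the profile $\omega=Q$, together with the self-dual relation $\bm{D}_+^{(Q)}Q=0$, collapses this to $\bm{D}_+^{(Q+\epsilon)}(Q+\epsilon)=L_Q\epsilon+N_Q[\epsilon]$. Squaring in the real Hilbert space $\big(L^2,(\cdot,\cdot)_r\big)$,
\[ 2E\big[(Q+\epsilon)e^{im\theta}\big]-\|L_Q\epsilon\|_{L^2}^2 = 2\,(L_Q\epsilon,N_Q[\epsilon])_r + \|N_Q[\epsilon]\|_{L^2}^2. \]
Hence the lemma follows from two inputs: the upper bound $\|L_Q\epsilon\|_{L^2}\lesssim\|\epsilon\|_{\dot{H}^1_m}$, which is (\ref{coer1}) of Lemma \ref{coercivity} (applicable since $\epsilon e^{im\theta}\in H^1_m\subset\dot{H}^1_m$), and the estimate $\|N_Q[\epsilon]\|_{L^2}\lesssim_m\|\epsilon\|_{\dot{H}^1_m}\big(\|\epsilon\|_{H^1_m}+\|\epsilon\|_{H^1_m}^2\big)$. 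Given these, Cauchy--Schwarz bounds the cross term by $\|\epsilon\|_{\dot{H}^1_m}^2(\|\epsilon\|_{H^1_m}+\|\epsilon\|_{H^1_m}^2)$ and $\|N_Q[\epsilon]\|_{L^2}^2$ by $\|\epsilon\|_{\dot{H}^1_m}^2(\|\epsilon\|_{H^1_m}^2+\|\epsilon\|_{H^1_m}^4)$, and their sum is dominated by $\|\epsilon\|_{\dot{H}^1_m}^2\sum_{k=1}^4\|\epsilon\|_{H^1_m}^k$.

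The substance is therefore the bilinear estimate on $N_Q[\epsilon]=\epsilon B_Q\epsilon+\frac{1}{2}QB_\epsilon\epsilon+\frac{1}{2}\epsilon B_\epsilon\epsilon$, each summand of which must release one factor $\|\epsilon\|_{\dot{H}^1_m}$. For the cubic piece, $B_\epsilon\epsilon=-\frac{2}{r}A_\theta[\epsilon]$, so $\epsilon B_\epsilon\epsilon=-2r^{-1}A_\theta[\epsilon]\epsilon$ and (\ref{lem312}) gives $\|\epsilon B_\epsilon\epsilon\|_{L^2}\lesssim\|\epsilon\|_{L^2}^2\|\partial_r\epsilon\|_{L^2}\le\|\epsilon\|_{H^1_m}^2\|\epsilon\|_{\dot{H}^1_m}$. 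For the mixed terms, Cauchy--Schwarz in the measure $r\,dr$ over $\{|x|<r\}$ yields $|B_Q\epsilon(r)|\lesssim r^{-1}\|Q\|_{L^2}\|\epsilon\|_{L^2}$ and $|B_\epsilon\epsilon(r)|=r^{-1}\|\epsilon\|_{L^2(\{|x|<r\})}^2\le r^{-1}\|\epsilon\|_{L^2(\{|x|<r\})}\|\epsilon\|_{L^2}$; combined with the elementary bound $\|\epsilon\|_{L^2(\{|x|<r\})}\le r\|r^{-1}\epsilon\|_{L^2}$ and, for $m\ge1$, the Hardy inequality $\|r^{-1}\epsilon\|_{L^2}\le m^{-1}\|\epsilon\|_{\dot{H}^1_m}$ built into the equivariant norm, this gives $\|\epsilon B_Q\epsilon\|_{L^2}\lesssim\|Q\|_{L^2}\|\epsilon\|_{L^2}\|r^{-1}\epsilon\|_{L^2}\lesssim_m\|\epsilon\|_{H^1_m}\|\epsilon\|_{\dot{H}^1_m}$ and, since the second display shows $|B_\epsilon\epsilon(r)|\lesssim_m\|\epsilon\|_{\dot{H}^1_m}\|\epsilon\|_{L^2}$ uniformly in $r$, also $\|QB_\epsilon\epsilon\|_{L^2}\le\|Q\|_{L^2}\big\||B_\epsilon\epsilon|\big\|_{L^\infty_x}\lesssim_m\|\epsilon\|_{\dot{H}^1_m}\|\epsilon\|_{H^1_m}$ (here $\|Q\|_{L^2}<\infty$ for all $m\ge0$ by (\ref{solitonq})). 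Summing gives the required bound on $\|N_Q[\epsilon]\|_{L^2}$.

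I expect these nonlocal bilinear estimates to be the main obstacle, the delicate point being the behaviour at the origin. For $m\ge1$ the equivariance resolves it through the Hardy inequality above. In the radial case $m=0$, where $\|r^{-1}\epsilon\|_{L^2}$ may be infinite, the statement is made with $\dot{H}^1_0$ replaced by $\dot{\mathcal{H}}_0$ of (\ref{coer3}), and the same scheme works after splitting each $B$-integral into $\{|x|\le1\}$ and $\{|x|>1\}$: on the inner region $B_Q\epsilon$ and $B_\epsilon\epsilon$ are bounded rather than $O(r^{-1})$, and $\|\epsilon\|_{L^2(\{|x|<1\})}\lesssim\|(1+r)^{-1}\epsilon\|_{L^2}\le\|\epsilon\|_{\dot{\mathcal{H}}_0}$ since $(1+r)^{-1}\gtrsim 1$ there; on the outer region $r^{-1}\sim(1+r)^{-1}$, so $\int_{|x|>1}r^{-2}|\epsilon|^2\,r\,dr\lesssim\|(1+r)^{-1}\epsilon\|_{L^2}^2$; and one invokes Lemma \ref{coercivity} and (\ref{lem312}) in their $\dot{\mathcal{H}}_0$ forms. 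As these changes are routine, I would carry the $m\ge1$ case in full and only indicate the $m=0$ adjustments.
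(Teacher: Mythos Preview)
Your approach is essentially that of the paper: expand $\bm{D}_+^{(Q+\epsilon)}(Q+\epsilon)=L_Q\epsilon+N_Q[\epsilon]$, reduce to $\|L_Q\epsilon\|_{L^2}\lesssim\|\epsilon\|_{\dot{H}^1_m}$ (Lemma~\ref{coercivity}) and $\|N_Q[\epsilon]\|_{L^2}\lesssim\|\epsilon\|_{\dot{H}^1_m}(\|\epsilon\|_{H^1_m}+\|\epsilon\|_{H^1_m}^2)$, and for $m\ge1$ handle each piece of $N_Q[\epsilon]$ by placing a factor $r^{-1}$ on $\epsilon$ and using $\|r^{-1}\epsilon\|_{L^2}\le\|\epsilon\|_{\dot{H}^1_m}$. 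The paper states this as a single trilinear bound $\|f_1B_{f_2}f_3\|_{L^2}\lesssim\|r^{-1}f_\alpha\|_{L^2}\|f_\beta\|_{L^2}\|f_\gamma\|_{L^2}$ for any permutation, but your term-by-term treatment is equivalent.

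The one place you diverge is $m=0$. The paper does not split into $\{r\le1\}$ and $\{r>1\}$; instead it uses the Strauss inequality $\|r^{1/2}\epsilon\|_{L^\infty}\lesssim\|\epsilon\|_{\dot{H}^1}^{1/2}\|\epsilon\|_{L^2}^{1/2}$ together with the explicit decay of $Q=\sqrt{8}(1+r^2)^{-1}$ to obtain $|B_Q\epsilon(r)|\lesssim\min\{r^{1/2},r^{-1}\}\|r^{1/2}\epsilon\|_{L^\infty}$ and $|B_\epsilon\epsilon(r)|\lesssim\|r^{1/2}\epsilon\|_{L^\infty}^2$, which yields the required factor $\|\epsilon\|_{\dot{H}^1}\le\|\epsilon\|_{\dot{\mathcal{H}}_0}$ directly. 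Your splitting sketch is plausible but not quite as routine as you suggest: extracting a genuine $\|\epsilon\|_{\dot{\mathcal{H}}_0}$ factor (rather than $\|\epsilon\|_{L^2}$) from $QB_\epsilon\epsilon$ on the outer region, where $B_\epsilon\epsilon(r)$ still involves $\int_0^1$, requires an additional input such as Strauss. Since $\|\partial_r\epsilon\|_{L^2}\le\|\epsilon\|_{\dot{\mathcal{H}}_0}$, the Strauss route is both shorter and actually proves a slightly stronger bound.
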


\begin{proof}[Proof of Lemma \ref{taylor}]
   Recall the decomposition (\ref{lindecomp}) and note that $\bm{D}^{(Q)}_+ Q = 0$, we have
    \begin{align*}
        &2E\left[ (Q+\epsilon)e^{im\theta} \right] -\|L_Q \epsilon\|_{L^2}^2= \int |\bm{D}^{(Q+\epsilon)}_+ (Q+\epsilon) |^2 - |L_Q \epsilon|^2 \\
        =& \int |\bm{D}^{(Q)}_+ Q + L_Q \epsilon + N_Q[\epsilon] |^2 - |L_Q \epsilon|^2 
        = \int |L_Q \epsilon + N_Q[\epsilon] |^2 - |L_Q \epsilon|^2 \\
        = &\int 2 \text{Re}(L_Q \epsilon \cdot \overline{N_Q [\epsilon]}) + | N_Q [\epsilon]|^2 
    \end{align*}
    Thus, (\ref{qcn}) follows from the following $L^2$ estimates for $L_Q \epsilon$ and $N_Q [\epsilon]$ (for $m = 0$ again substitute $\dot{H}^1_0$ by $\dot{\mathcal{H}}_0$).
    \begin{align}
       \| L_Q \epsilon\|_{L^2}&\lesssim \|\epsilon \|_{\dot{H}^1_m}, \label{qcn3}\\
       \| N_Q [\epsilon] \|_{L^2} & \lesssim  \|\epsilon \|_{\dot{H}^1_m} \left(  \|\epsilon \|_{H^1_m} +  \|\epsilon \|_{H^1_m}^2 \right).\label{qcn4}
    \end{align}
    (\ref{qcn3}) comes from Lemma \ref{coercivity}. Next we prove (\ref{qcn4}). Recall that 
    \[ N_Q[\epsilon] = \epsilon B_Q \epsilon + \frac{1}{2}Q B_\epsilon \epsilon + \frac{1}{2} \epsilon B_\epsilon \epsilon, \]
    where 
    \[ B_f g = \frac{1}{r} \int^r_0 \text{Re}(\bar{f} g) r' dr'. \] 
    We distinguish two cases. 
    
    \cu{Case 1. $m \ge 1$.} 
    
    Now that $\frac{1}{r} \epsilon$ is bounded in $L^2$, we have the following estimate
    \[ \| f_1 B_{f_2} f_3 \|_{L^2} \lesssim  \| \frac{1}{r}f_\alpha \|_{L^2}  \| f_\beta \|_{L^2}  \| f_\gamma \|_{L^2},\quad \text{for}\,\, \{ \alpha, \beta, \gamma \} = \{ 1, 2, 3 \} \]
    by putting $\frac{1}{r}$ on $f_1$ or inside the integral of $B_{f_2} f_3$ to be $\frac{1}{r'}$ then applying Cauchy-Schwartz inequality. Using this estimate and take $\frac{1}{r}$ onto $\epsilon$, (\ref{qcn4}) follows.
    
    \cu{Case 2. $m = 0$.} 
    
    In this case, we need to be more careful. Recall that $Q = \sqrt{8} \frac{1}{1+r^2}$,
     we see
    \[ |B_Q \epsilon(r)| \lesssim \frac{1}{r} \int_0^r \frac{1}{1+(r')^2} \| r^{\frac{1}{2}} \epsilon \|_{L^\infty} \frac{1}{(r')^\frac{1}{2}} r'dr' \lesssim \min \{ r^{\frac{1}{2}}, r^{-1} \} \|r^{\frac{1}{2}} \epsilon \|_{L^\infty}.\]
    By the Strauss' estimate in $\real^2$ 
    \beq |f(r)| \lesssim \| f\|_{\dot{H}^1(\{|x| \ge r \})}^{\frac{1}{2}} \|f\|_{L^2(\{|x| \ge r \})}^{\frac{1}{2}} r^{-\frac{1}{2}}, \label{strauss1} \eeq
    we have
    \begin{align*}
     \| \epsilon B_Q \epsilon \|_{L^2} &\le \|r^{\frac{1}{2}}  \epsilon \|_{L^\infty} \| r^{-\frac{1}{2}} B_Q \epsilon \|_{L^2} \\
    &\lesssim  \|r^{\frac{1}{2}} \epsilon \|_{L^\infty}^2 \left\| \min\{1, r^{-\frac{3}{2}}\} \right\|_{L^2} \lesssim  \| \epsilon \|_{\dot{H}^1} \| \epsilon \|_{L^2}.
    \end{align*}
    And for $B_\epsilon \epsilon$,
    \[ |B_\epsilon \epsilon(r)| \lesssim \frac{1}{r} \int_0^r dr'  \| r^{\frac{1}{2}} \epsilon \|_{L^\infty}^2 \lesssim  \| r^{\frac{1}{2}} \epsilon \|_{L^\infty}^2, \]
    then 
    \begin{align*} \|Q B_\epsilon \epsilon\|_{L^2} +  \|\epsilon B_\epsilon \epsilon\|_{L^2} &\lesssim (\|Q\|_{L^2} + \|\epsilon\|_{L^2} ) \| r^{\frac{1}{2}} \epsilon \|_{L^\infty}^2 \\
     &\lesssim \| \epsilon \|_{\dot{H}^1} (\| \epsilon \|_{L^2} +\| \epsilon \|_{L^2}^2 ). \end{align*}
    Thus (\ref{qcn4}) holds for $m \ge 0$.

\end{proof}

\subsubsection{Modulation analysis}

\begin{lem}[Rigidity of the ground state, quantitative version]\label{rigidquan}
  Let $m \ge 0$, and $\phi^{(m)} = Qe^{im\theta}$ be the corresponding soliton. There exists constants $\eta > 0$, $C > 1,\,K>0$ such that the following be true.
  
  Let $\phi = u e^{im\theta} \in H^1_m$ be such that
  \[ \| \phi \|_{L^2} =\| \phi^{(m)} \|_{L^2}, \qquad  \| \nabla \phi \|_{L^2} = \| \nabla \phi^{(m)} \|_{L^2},\]
  and
  \beq  E[\phi] \le \eta.  \label{rigquan1}\eeq
  Then there exist $\gamma_0 = \gamma_0 (\phi) \in \real$, $\lambda_0 = \lambda_0 (\phi) > 0$ with
  \beq \frac{1}{C} \le \lambda_0 \le C \label{rigquan2}\eeq
  such that
  \[ \epsilon = e^{i\gamma_0} \lambda_0 u(\lambda_0 \cdot) - Q \]
  satisfies the following:
  \begin{enumerate}[(1)]
    \item The orthogonality condition
      \beq (\text{Re}(\epsilon), \Lambda Q)_r = (\text{Im}(\epsilon), Q)_r = 0.\label{rigquan3}\eeq
    \item The bound of $\dot{H}^1_m$ norm (substituted by $\dot{\mathcal{H}}_0$ in (\ref{coer3}) for $m=0$ case)
      \beq \| \epsilon\|_{\dot{H}^1_m} \le K \sqrt{E[\phi]}. \label{rigquan4} \eeq
  \end{enumerate}
\end{lem}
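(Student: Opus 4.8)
The plan is a standard modulation argument built on the qualitative rigidity of Corollary~\ref{rigidqual}. That corollary places $\phi$, after a phase rotation, $H^1_m$-close to $\phi^{(m)}$; the implicit function theorem then adjusts the phase and inserts a scaling parameter so as to enforce the two orthogonality conditions in (\ref{rigquan3}); and finally the coercivity of $\mathcal{L}_Q$ (Lemma~\ref{coercivity}) together with the energy expansion of Lemma~\ref{taylor} upgrades the resulting $H^1$-smallness to the quantitative bound (\ref{rigquan4}). To start, since $\|\phi\|_{L^2}=\|\phi^{(m)}\|_{L^2}$, $\|\phi\|_{\dot{H}^1_m}=\|\nabla\phi\|_{L^2}=\|\nabla\phi^{(m)}\|_{L^2}=\|\phi^{(m)}\|_{\dot{H}^1_m}$ and $E[\phi]\le\eta$, Corollary~\ref{rigidqual} produces a phase $\gamma_1$ with $\|e^{i\gamma_1}\phi-\phi^{(m)}\|_{H^1_m}<\epsilon_0$, where $\epsilon_0=\epsilon_0(\eta)\to0$ as $\eta\to0$; write $\tilde u$ for the radial part of $e^{i\gamma_1}\phi$, so that $\|\tilde u-Q\|_{H^1_m}<\epsilon_0$.

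For the modulation, given $(\beta,\mu)\in\real\times\real_+$ and a radial profile $w$ set $w_{\beta,\mu}(r):=e^{i\beta}\mu\,w(\mu r)$ and define
\[
 \Phi(\beta,\mu,w):=\big(\,(\text{Re}(w_{\beta,\mu}-Q),\Lambda Q)_r,\ \ (\text{Im}(w_{\beta,\mu}),Q)_r\,\big)\in\real^2 .
\]
The change of variables $y=\mu r$ shows $\Phi$ is smooth in $(\beta,\mu)$ and affine in $w\in L^2_m$, with $\Phi(0,1,Q)=0$. At $(\beta,\mu,w)=(0,1,Q)$ one has $\partial_\beta w_{\beta,\mu}=iQ$ and $\partial_\mu w_{\beta,\mu}=\Lambda Q$, and since $\text{Re}(iQ)=0$ and $\text{Im}(\Lambda Q)=0$ ($Q$, $\Lambda Q$ being real) while $\|Q\|_{L^2},\|\Lambda Q\|_{L^2}>0$, the differential $D_{(\beta,\mu)}\Phi(0,1,Q)$ is anti-diagonal with nonzero entries $\|\Lambda Q\|_{L^2}^2$ and $\|Q\|_{L^2}^2$, hence invertible. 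The implicit function theorem then yields, for $\tilde u$ in a small $L^2_m$-ball around $Q$ (secured by shrinking $\eta$), unique $(\beta,\mu)$ near $(0,1)$ with $\Phi(\beta,\mu,\tilde u)=0$ and $|\beta|+|\mu-1|\lesssim\|\tilde u-Q\|_{L^2}<\epsilon_0$. Set $\gamma_0:=\gamma_1+\beta$, $\lambda_0:=\mu$, and $\epsilon:=e^{i\gamma_0}\lambda_0 u(\lambda_0\cdot)-Q=\tilde u_{\beta,\mu}-Q$. Then (\ref{rigquan2}) holds with, say, $C=2$; (\ref{rigquan3}) holds by construction; and joint continuity of $(g,\beta,\mu)\mapsto g_{\beta,\mu}$ near $(Q,0,1)$ in $H^1_m$ gives $\|\epsilon\|_{H^1_m}\lesssim\epsilon_0$, which we may take below any prescribed threshold.

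For the quantitative estimate, note that since $Q$ and $\Lambda Q$ are real, the conditions in (\ref{rigquan3}) are equivalent to $(\epsilon,\Lambda Q)_r=(\epsilon,iQ)_r=0$, so $\epsilon$ lies in the subspace where the coercivity estimate (\ref{coer2}) applies: $\|\epsilon\|_{\dot{H}^1_m}^2\lesssim\|L_Q\epsilon\|_{L^2}^2$ (with $\dot{\mathcal{H}}_0$ of (\ref{coer3}) replacing $\dot{H}^1_0$ when $m=0$). Moreover the energy scales by $E[e^{i\gamma}\lambda f(\lambda\cdot)]=\lambda^2E[f]$, so $E[(Q+\epsilon)e^{im\theta}]=\lambda_0^2E[\phi]\lesssim E[\phi]$ since $\lambda_0\sim1$. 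Plugging these into Lemma~\ref{taylor},
\[
 \|\epsilon\|_{\dot{H}^1_m}^2\ \lesssim\ \|L_Q\epsilon\|_{L^2}^2\ \le\ 2E[(Q+\epsilon)e^{im\theta}]+C\|\epsilon\|_{\dot{H}^1_m}^2\sum_{k=1}^4\|\epsilon\|_{H^1_m}^k\ \lesssim\ E[\phi]+C\|\epsilon\|_{\dot{H}^1_m}^2\sum_{k=1}^4\|\epsilon\|_{H^1_m}^k .
\]
Having arranged in the modulation step that $\|\epsilon\|_{H^1_m}$ is small enough that $C\sum_{k=1}^4\|\epsilon\|_{H^1_m}^k$ is below half the coercivity constant, the last term is absorbed into the left-hand side, leaving $\|\epsilon\|_{\dot{H}^1_m}^2\lesssim E[\phi]$, which is (\ref{rigquan4}) for a suitable $K$. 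The admissible $\eta$ is then chosen by tracing these smallness requirements backwards: fix first the absorption threshold for $\|\epsilon\|_{H^1_m}$, then the implicit-function-theorem radius, and finally $\eta=\eta(\epsilon_0)$ via Corollary~\ref{rigidqual}.

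The one genuinely substantive point is the nondegeneracy above: that the symmetry generators $iQ$ (phase) and $\Lambda Q$ (scaling) pair with the functionals $(\cdot,\Lambda Q)_r$ and $(\cdot,iQ)_r$ in exactly the pattern making $D_{(\beta,\mu)}\Phi(0,1,Q)$ invertible; with this in hand the rest is routine bookkeeping. The only additional care concerns $m=0$, where the pairings $(\epsilon,iQ)_r$, $(\epsilon,\Lambda Q)_r$ and the coercivity are read through the $\dot{\mathcal{H}}_0$ norm of (\ref{coer3}); since throughout we only apply them to $\epsilon\in H^1$ with $Q\in L^2$ valid for every $m\ge0$, all these expressions are well defined and the argument is unchanged.
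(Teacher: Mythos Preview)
Your proof is correct and follows essentially the same two-step approach as the paper: an implicit function theorem modulation argument (with the same Jacobian computation showing nondegeneracy via $\|\Lambda Q\|_{L^2}^2$ and $\|Q\|_{L^2}^2$), followed by coercivity of $L_Q$ combined with the energy expansion of Lemma~\ref{taylor} and absorption of the higher-order terms. The only cosmetic difference is that the paper sets up the IFT directly on an $H^1$-neighborhood $U_{\alpha_0}$ of $\phi^{(m)}$ and then invokes Corollary~\ref{rigidqual} to land $\phi$ there, whereas you invoke Corollary~\ref{rigidqual} first and then run the IFT; your Jacobian comes out anti-diagonal rather than diagonal simply because of the ordering of the modulation parameters.
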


\begin{rmk}
  Compared with the rigidity in Corollary \ref{rigidqual}, here we have stronger quantitative estimate of the residual. But it's only a $\dot{H}^1_m$ control, which essentially comes from the $\dot{H}^1_m$ coercivity of $\mathcal{L}_Q$ (Lemma \ref{coercivity}).
\end{rmk}

\begin{proof}
\cu{Step 1. Modulation method.}
 
 We show that (\ref{rigquan3}) holds. Define the $H^1$ neighborhood of $\phi^{(m)}$
 \[ U_\alpha := \{ \phi \in H^1_m(\real^2) : \|\phi - \phi^{(m)} \|_{H^1_m} < \alpha\}. \]
 For any $\gamma \in \real,\,\lambda > 0,\,\phi = ue^{im\theta} \in H^1_m$, define 
 \beq \epsilon_{\lambda, \gamma} := e^{i\gamma}\lambda u (\lambda \cdot) - Q. \eeq
 We claim that there exists $\alpha_0 > 0$ and a unique map $(\lambda, \gamma) : U_{\alpha_0} \rightarrow \real^+ \times \real$ satisfying:
 \begin{align}
   (\text{Re}(\epsilon_{\lambda, \gamma}), \Lambda Q)_r = (\text{Im}(\epsilon_{\lambda, \gamma}), Q)_r = 0.
 \end{align}
Furthermore, there exists a constant $K_1 > 0$ such that for $0 < \alpha < \alpha_0$, $\phi \in U_\alpha$, then
\beq    \| \epsilon_{\lambda, \gamma} \|_{H^1_m} + |\lambda - 1| + | \gamma| \le K_1\alpha.\label{boundmm}  \eeq

Consider the functional
\[ \rho_1(\phi, \lambda, \gamma) :=  (\text{Re}(\epsilon_{\lambda, \gamma}), \Lambda Q)_r,\qquad \rho_2 (\phi, \lambda, \gamma) := (\text{Im}(\epsilon_{\lambda, \gamma}), Q)_r, \]
and note that $\rho_1 (Q, 1, 0 ) = \rho_2 (Q,1 , 0)= 0$,  we only need to show
\beq \frac{\partial (\rho_1, \rho_2)}{\partial (\lambda, \gamma)}\Bigg|_{(Q, 1, 0)}  \label{jacob} \eeq
non-degenerate, then apply the implicit function theorem for Banach space. Note that
\beq \frac{\partial \epsilon_{\lambda, \gamma}}{\partial \lambda}\Big|_{(1, 0)} = \Lambda u,\qquad \frac{\partial \epsilon_{\lambda, \gamma}}{\partial \gamma}\Big|_{(1, 0)} = i u.\eeq
Thus 
\[ (\ref{jacob}) = \left(
\begin{array}{cc}
  \| \Lambda Q \|_{L^2}^2 & 0 \\
  0 & \| Q \|_{L^2}^2
\end{array}\right)
\]
is non-degenerate. So the claim holds, which implies that (\ref{rigquan3}) holds if $\phi \in U_{\alpha_0}$.

Finally, using Corollary \ref{rigidqual}, for sufficiently small $\eta$, (\ref{rigquan1}) implies $\phi \in U_{\alpha_0}$. We've finished the proof of (\ref{rigquan3}). Also (\ref{rigquan2}) comes immediately after (\ref{boundmm}).

\cu{Step 2. Quantitative control of $\dot{H}^1_m$ norm}

By Step 1, we already know $\epsilon$ satisfies the orthogonality condition of Lemma \ref{coercivity}, so we can use $\|L_Q \epsilon \|_{L^2}$ to control the $\dot{H}^1_m$ norm by (\ref{coer2}). Recalling Lemma \ref{taylor}, this quantity is essentially $\sqrt{E(\phi)}$ with higher order errors. Combined with the scaling property $\lambda_0^2 E[\phi] = E\left[(Q+\epsilon)e^{im\theta}\right]$, for $m \ge 1$, we have
\begin{align}
 \| \epsilon \|_{\dot{H}^1_m}^2 \le C_m \| L_Q \epsilon \|_{L^2}^2 \le  2 C_m \lambda_0^2 E\left(\phi\right) + C_m C_m' \| \epsilon \|_{\dot{H}^1_m}^2 \left( \sum_{k=1}^4 \| \epsilon\|_{H^1_m}^k \right). \label{qcn2}
\end{align}
 From (\ref{boundmm}), if we take $\eta$ sufficiently small so that $\| \epsilon\|_{H^1_m}$ small enough and satisfies
 \[ \left( \sum_{k=1}^4 \| \epsilon\|_{H^1_m}^k \right) C_m C_m'  \le \frac{1}{2},\]
 then left hand side of (\ref{qcn2}) can absorb the last term on the right, which complete the proof for $m \ge 1$. As for $m = 0$ case, just replace $\dot{H}^1_0$-norm by $\dot{\mathcal{H}}_0$ and the above estimates still hold.
 
\end{proof}

\subsubsection{Proof of Proposition \ref{weaklocal}}

We can apply the modulation analysis to get the following non-sharp decomposition Proposition \ref{nonsharpdecomp}, which provides a useful bound for the distance to the soliton family. 

\begin{prop}[Non-sharp decomposition of $H^1_m$ function with threshold charge]\label{nonsharpdecomp}
   For $m \ge 0$, there exists $C_1, C_2 > 0$ such that: for any $\phi = ue^{im\theta} \in H^1_m$ with $\|\phi\|_{L^2} = \|\phi^{(m)} \|_{L^2}$ ($\phi^{(m)}$ be the m-equivariant soliton), there exists $\theta_0 = \theta_0(\phi) \in \real$, $\lambda = \lambda (\phi)$, $\epsilon e^{im\theta} = \epsilon(\phi) e^{im\theta} \in H^1_m$ for which we have
   \[ u = \lambda e^{i\theta_0} Q(\lambda \cdot)+ \epsilon, \]
   where
   \[ \frac{1}{C_2} \cdot \frac{\| \nabla \phi \|_{L^2}}{\|\nabla \phi^{(m)} \|_{L^2}} \le \lambda \le C_2 \cdot \frac{\| \nabla \phi \|_{L^2}}{\|\nabla \phi^{(m)} \|_{L^2}},\qquad \text{if} \,\, \| \nabla \phi \|_{L^2}^2 \ge C_1 E[\phi], \]
   and 
   \[ \lambda = 1 ,\qquad \text{if} \,\, \| \nabla \phi \|_{L^2}^2 \le C_1 E[\phi]. \]
   The term $\epsilon$ satisfies the bound
   \beq \| \epsilon \|_{\dot{H}^1_m} \lesssim \sqrt{E[\phi]} + 1. \label{nonsharp2}\eeq 
\end{prop}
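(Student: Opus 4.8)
The plan is to reduce the statement to the quantitative rigidity of Lemma~\ref{rigidquan} after normalizing the kinetic energy with the $L^2$-scaling, while splitting off the degenerate regime in which $\|\nabla\phi\|_{L^2}$ is so small that $\phi$ need not be close to the soliton family at all. I would fix once and for all the constants $\eta, C, K$ furnished by Lemma~\ref{rigidquan} and set $C_1 := \max\{1,\ \|\nabla\phi^{(m)}\|_{L^2}^2/\eta\}$.

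\emph{Degenerate regime: $\|\nabla\phi\|_{L^2}^2 \le C_1 E[\phi]$.} Here I would simply take $\lambda = 1$, $\theta_0 = 0$ and $\epsilon := u - Q$. The triangle inequality together with $\|\nabla\phi^{(m)}\|_{L^2} = \|\nabla Q\|_{L^2}$ and the case hypothesis then gives $\|\epsilon\|_{\dot H^1_m} \le \|\nabla\phi\|_{L^2} + \|\nabla\phi^{(m)}\|_{L^2} \le \sqrt{C_1 E[\phi]} + \|\nabla\phi^{(m)}\|_{L^2} \lesssim \sqrt{E[\phi]} + 1$, which is the bound (\ref{nonsharp2}); and there is no constraint on $\lambda$ to verify.

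\emph{Main regime: $\|\nabla\phi\|_{L^2}^2 > C_1 E[\phi]$.} Put $\lambda_* := \|\nabla\phi\|_{L^2}/\|\nabla\phi^{(m)}\|_{L^2} > 0$ and un-scale: let $g := \lambda_*^{-1}\phi(\lambda_*^{-1}\cdot)$, with radial part $\psi(r) = \lambda_*^{-1}u(\lambda_*^{-1}r)$. The scaling identities give $\|g\|_{L^2} = \|\phi^{(m)}\|_{L^2}$, $\|\nabla g\|_{L^2} = \lambda_*^{-1}\|\nabla\phi\|_{L^2} = \|\nabla\phi^{(m)}\|_{L^2}$, and $E[g] = \lambda_*^{-2}E[\phi] \le \|\nabla\phi^{(m)}\|_{L^2}^2/C_1 \le \eta$ by the case hypothesis, so Lemma~\ref{rigidquan} applies to $g$ and yields $\gamma_0 \in \real$, $\mu_0 \in [1/C, C]$ and $\tilde\epsilon := e^{i\gamma_0}\mu_0\psi(\mu_0\cdot) - Q$ with $\|\tilde\epsilon\|_{\dot H^1_m} \le K\sqrt{E[g]}$ (with $\dot{\mathcal H}_0$ in place of $\dot H^1_0$ when $m = 0$, and orthogonality conditions that I would not need here). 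Composing the two rescalings gives $u(s) = e^{-i\gamma_0}(\mu_0^{-1}\lambda_*)\,Q\big((\mu_0^{-1}\lambda_*)s\big) + e^{-i\gamma_0}(\mu_0^{-1}\lambda_*)\,\tilde\epsilon\big((\mu_0^{-1}\lambda_*)s\big)$, so I would take $\lambda := \mu_0^{-1}\lambda_*$, $\theta_0 := -\gamma_0$ and $\epsilon(s) := e^{-i\gamma_0}\lambda\,\tilde\epsilon(\lambda s)$. Since $\mu_0 \in [1/C, C]$, the two-sided bound on $\lambda$ holds with $C_2 := C$, and using that $f \mapsto \lambda f(\lambda\cdot)$ multiplies the $\dot H^1_m$-seminorm (and, for $m = 0$, the $L^2$-norm of the gradient) by $\lambda$, one obtains $\|\epsilon\|_{\dot H^1_m} = \lambda\|\tilde\epsilon\|_{\dot H^1_m} \le \lambda K \lambda_*^{-1}\sqrt{E[\phi]} = \mu_0^{-1}K\sqrt{E[\phi]} \le CK\sqrt{E[\phi]}$.

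I do not expect a serious obstacle; the delicate points are all bookkeeping. They are: (i) choosing $C_1$ so that the normalized profile $g$ always lands in the small-energy ball of Lemma~\ref{rigidquan}, which works precisely because the normalizing factor $\lambda_*$ is pinned to the ratio of kinetic energies; (ii) correctly composing the normalizing scaling $\lambda_*$ with the modulation scaling $\mu_0$ returned by the lemma, using the homogeneity of the seminorms; and (iii) the $m = 0$ case, where one only needs $\|\cdot\|_{\dot H^1_0} \le \|\cdot\|_{\dot{\mathcal H}_0}$ together with $\|\nabla(\lambda f(\lambda\cdot))\|_{L^2} = \lambda\|\nabla f\|_{L^2}$, so that the $\dot H^1_m$ bound (\ref{nonsharp2}) still follows despite the mismatch of norms in Lemma~\ref{rigidquan}.
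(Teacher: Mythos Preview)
Your proposal is correct and follows essentially the same approach as the paper: rescale by the ratio of $\dot H^1$ norms so that Lemma~\ref{rigidquan} applies in the main regime, then undo the scaling, while handling the degenerate regime by the trivial decomposition $\epsilon = u - Q$. Your treatment of the $m=0$ case (passing from $\dot{\mathcal H}_0$ to $\dot H^1_0$ before rescaling) is in fact slightly more explicit than the paper's.
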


\begin{proof}[Proof of Proposition \ref{nonsharpdecomp}]
  Let $\phi = ue^{im\theta}\in H^1_m$ and $\|\phi \|_{L^2} = \| \phi^{(m)}\|_{L^2} $. Rescale by $\mu = \frac{\| \phi^{(m)} \|_{\dot{H}^1_m}}{ \|\phi \|_{\dot{H}^1_m}} $,  $\tilde{\phi} := \mu \phi(\mu \cdot)$. Then 
  \[ \| \tilde{\phi}\|_L^2 =\| \phi^{(m)} \|_{L^2},\quad \| \tilde{\phi} \|_{\dot{H}^1_m} = \| \phi^{(m)} \|_{\dot{H}^1_m},\quad E[\tilde{\phi}] = \frac{ \| \phi^{(m)} \|_{\dot{H}^1_m}^2 }{\|\phi \|_{\dot{H}^1_m}^2} E[\phi]. \]
  Take  $C_1 :=\frac{\| \phi^{(m)} \|_{\dot{H}^1_m}^2}{\eta}$ with $\eta$ as in Lemma \ref{rigidquan}. We distinguish two cases.
  
  \cu{Case 1. $\| \phi \|_{\dot{H}^1_m}^2  \ge C_1 E[\phi]$.}
  
 Note that the condition is exactly the smallness condition (\ref{rigquan1}) for rigidity of $\tilde{\phi}$. We can apply Lemma \ref{rigidquan} for $\tilde{\phi}$. Thus there exists $\tilde{\gamma}_0 \in \real$ and $\tilde{\lambda}_0 \in [\frac{1}{C}, C]$ and 
  \begin{align}
   \tilde{\epsilon} := e^{i\tilde{\gamma}_0} \tilde{\lambda}_0 \tilde{u} (\tilde{\lambda}_0 \cdot) - Q, \nonumber \\
   \| \tilde{\epsilon}\|_{\dot{H}^1_m} \le K \sqrt{E[\tilde{\phi}]}.  \label{energytilde}
    \end{align}
   Then after rescaling, we see 
   \[ u = e^{i\theta_0} \lambda Q(\lambda \cdot) + \epsilon \]
   where $\theta_0 = - \tilde{\gamma}_0$,  
   $\lambda = (\tilde{\lambda}_0 \mu )^{-1} \in \left[ \frac{1}{C} \frac{\| \phi \|_{\dot{H}^1_m}}{ \|\phi^{(m}) \|_{\dot{H}^1_m}}, C \frac{\| \phi \|_{\dot{H}^1_m}}{ \|\phi^{(m}) \|_{\dot{H}^1_m}}\right] $
   and 
   $\epsilon = \lambda \tilde{\epsilon}(\lambda \cdot) $. Since $\lambda \sim \mu^{-1}$, by rescaling of (\ref{energytilde}) we immediately obtain (\ref{nonsharp2}).
   
   \cu{Case 2. $\| \phi \|_{\dot{H}^1_m}^2 \le C_1 E[\phi] $.}
   
   We just set $\theta_0 = 0, \, \lambda = 1$ and $\epsilon = u - Q$. Then (\ref{nonsharp2}) comes from a rough bound
   \[ \| \epsilon \|_{\dot{H}^1_m} \le  \| Q \|_{\dot{H}^1_m}  +  \| u \|_{\dot{H}^1_m} \le  \| Q \|_{\dot{H}^1_m}  +  \| \phi \|_{\dot{H}^1_m}  \lesssim_m 1 + \sqrt{E[\phi]}. \]
   
\end{proof}

Finally, we conclude 

\begin{proof}[Proof of Proposition \ref{weaklocal}]
  Apply Proposition \ref{nonsharpdecomp}. Then the bound is obvious when $\| \nabla \phi\|_{L^2}^2 \le C_1 E[\phi]$.
  
  If $\| \nabla \phi\|_{L^2}^2 \ge C_1 E[\phi]$, 
  \begin{align*} 
  \| \varphi_{>c} \nabla \phi \|_{L^2} &\lesssim \| \varphi_{>c} \partial_r \phi \|_{L^2} + \| \varphi_{>c} \frac{1}{r} \phi \|_{L^2} \\
  &\lesssim \| \varphi_{>c} \partial_r (\lambda Q(\lambda \cdot))\|_{L^2} + \| \varphi_{>c} \partial_r \epsilon \|_{L^2} + \| \phi \|_{L^2 }. 
   \end{align*}
   The third term is bound by threshold charge $\| \phi \|_{L^2} = \|Q \|_{L^2}$, and the second by  (\ref{nonsharp2}). For the first term, recall that $Q = C_m \frac{r^{m}}{1+ r^{2(m+1)}}$ have good decay property away from the origin.
  \begin{align*}
    \| \lambda Q(\lambda \cdot) \|_{\dot{H}^1_m (|x| \ge c)}^2& \lesssim_m \int_c^\infty \lambda^2 \left(\lambda^2 |\partial_r Q(\lambda r)|^2 + \frac{1}{r^2} Q(\lambda r)^2\right) rdr\\
   & \lesssim_m \int_c^\infty \lambda^4 \left(\frac{1}{\lambda r}\right)^{2m + 6} rdr   = \lambda^2 \int^\infty_{\lambda c} s^{-2m-5} dr \\
   & \lesssim_m \lambda^{-2m -2} c^{-2m-4} \lesssim_{m,E[\phi],c} 1
  \end{align*}
  The last inequality follows from $\lambda \sim_m \| \phi \|_{\dot{H}^1_m} \gtrsim E[\phi] > 0$ in this case.

\end{proof}

\subsection{Proof of Proposition \ref{freqdecayest}}\label{4.4}

To establish this proposition, we first summarize the estimate we will use for the nonlinearity. 

\begin{lem}[Estimate for nonlinearity]\label{nonlinearest}
    Let $\phi$ be such as in Proposition \ref{freqdecayest} and $F(\phi)$ be the nonlinearity (\ref{nl}) of (\ref{CSS}), we have the following estimate
    \begin{align}
      \| \varphi_{> \frac{1}{4}} F(\phi) \|_{L^\infty_t L^2_x([0, \infty) \times \real^2)}& \lesssim_\phi 1 \label{nonest1}\\
      \| \varphi_{> \frac{1}{4}} \partial_r (F(\phi)) \|_{L^\infty_t L^2_x([0, \infty) \times \real^2)}& \lesssim_\phi 1  \label{nonest2}\\
      \| \varphi_{> T} F(\phi) \|_{L^\infty_t L^1_x([0, \infty) \times \real^2)}& \lesssim_\phi T^{-\frac{1}{2}}  \label{nonest3}\\
      \| \varphi_{> T} \partial_r (F(\phi)) \|_{L^\infty_t L^1_x([0, \infty) \times \real^2)}& \lesssim_\phi T^{-\frac{1}{2}} \label{nonest4}
    \end{align}
\end{lem}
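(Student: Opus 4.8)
\textbf{Proof plan for Lemma \ref{nonlinearest}.} The plan is to split the nonlinearity $F(\phi)$ from (\ref{nl}) into its four constituents — the two ``magnetic'' terms $\tfrac{2m}{r}A_\theta\phi$ and $\tfrac{1}{r^2}A_\theta^2\phi$, the ``electric'' term $A_0\phi$, and the cubic term $|\phi|^2\phi$ — and to estimate each in the region $\{|x|>\tfrac14\}$ (for (\ref{nonest1})–(\ref{nonest2})) or $\{|x|>T\}$ (for (\ref{nonest3})–(\ref{nonest4})) using explicit pointwise a priori bounds that hold uniformly in $t\ge 0$. First I would record these bounds: charge conservation gives $|A_\theta[u](t,r)|\le\tfrac12\|\phi_0\|_{L^2}^2\lesssim_\phi 1$, and since $\int_r^\infty |u|^2\,\tfrac{ds}{s}\le r^{-2}\int_0^\infty |u|^2 s\,ds$ also $|A_0[u](t,r)|\lesssim_\phi r^{-2}$; differentiating the explicit formulas (equivalently (\ref{1ar0})) yields $\partial_r A_\theta=-\tfrac12 r|u|^2$ and $|\partial_r A_0|\lesssim r^{-1}|u|^2$. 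Conservation of energy together with the weak localization Proposition \ref{weaklocal} gives $\|\varphi_{>c}\nabla\phi(t)\|_{L^2}\lesssim_{c,\phi}1$ uniformly in $t$; combined with $\|\phi(t)\|_{L^2}=\|Q\|_{L^2}$ and the radial Strauss estimate (\ref{strauss1}) this produces $\|\phi(t)\|_{L^\infty(\{|x|\ge c\})}\lesssim_{c,\phi}1$, the pointwise decay $|u(t,r)|\lesssim_\phi r^{-1/2}$ for $r\ge 1$, and $\|\nabla(\varphi_{>c}\phi(t))\|_{L^2}\lesssim_{c,\phi}1$.

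With these in hand, (\ref{nonest1}) is immediate: on $\{|x|>\tfrac14\}$ the coefficients $\tfrac{m}{r}A_\theta$, $A_0$, $\tfrac{1}{r^2}A_\theta^2$ are all $O_\phi(1)$, so the three non-cubic terms are bounded in $L^2$ by $O_\phi(\|\phi\|_{L^2})$, while $\|\varphi_{>1/4}|\phi|^2\phi\|_{L^2}\le\|\phi\|_{L^\infty(\{|x|\ge1/4\})}^2\|\phi\|_{L^2}\lesssim_\phi 1$. For (\ref{nonest2}) one applies $\partial_r$ and uses the product rule: a derivative landing on $\phi$ produces $\partial_r\phi$, controlled in $L^2(\{|x|>\tfrac14\})$ by Proposition \ref{weaklocal}; a derivative landing on $A_\theta$ or $A_0$ produces the extra factor $r|u|^2$ or $r^{-1}|u|^2$, which after combining with the $r^{-1}$ or $r^{-2}$ prefactor turns that term into something $\lesssim_\phi |\phi|^3$, again bounded by $\|\phi\|_{L^\infty(\{|x|\ge1/4\})}^2\|\phi\|_{L^2}$; finally $|\partial_r(|\phi|^2\phi)|\lesssim|\phi|^2|\partial_r\phi|$ is bounded by $\|\phi\|_{L^\infty(\{|x|\ge1/4\})}^2\|\varphi_{>1/4}\partial_r\phi\|_{L^2}\lesssim_\phi 1$.

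For the weighted $L^1$ estimates (\ref{nonest3})–(\ref{nonest4}) I restrict to $\{|x|>T\}$ with $T\ge1$. The electric and $A_\theta^2$ terms are harmless: $\|\varphi_{>T}A_0\phi\|_{L^1}\lesssim_\phi\int_T^\infty r^{-2}|u|\,r\,dr\lesssim_\phi T^{-1}$ by Cauchy--Schwarz against $r\,dr$, and likewise $\tfrac{1}{r^2}A_\theta^2\phi$; the cubic term obeys $\|\varphi_{>T}|\phi|^2\phi\|_{L^1}\le\|\phi\|_{L^\infty(\{|x|\ge T\})}\|\phi\|_{L^2}^2\lesssim_\phi T^{-1/2}$ using $\|\phi\|_{L^\infty(\{|x|\ge T\})}\lesssim_\phi T^{-1/2}$; and the corresponding $\partial_r$ contributions are handled the same way, noting that every term created by $\partial_r$ hitting $A_0$ or $A_\theta$ carries an extra $|u|^2$, so it is of the form $\lesssim_\phi r^{-1}|u|^3$ or $\lesssim_\phi |u|^3$ (hence $\lesssim_\phi\int_T^\infty r^{-3/2}\,dr\lesssim T^{-1/2}$), while the terms with $\partial_r\phi$ retain the $r^{-2}$ prefactor and give $T^{-1}$. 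The one contribution resisting these soft arguments is $\tfrac{2m}{r}A_\theta\phi$ (present only for $m\ge 1$) and its $\partial_r\phi$-piece in (\ref{nonest4}): here $A_\theta$ does \emph{not} decay — it tends to the nonzero constant $-\tfrac1{4\pi}\|Q\|_{L^2}^2$ at spatial infinity — so after the $r^{-1}$ prefactor is absorbed by the area element one is left with exactly $\int_T^\infty|u(t,r)|\,dr$ (resp.\ $\int_T^\infty|\partial_r u(t,r)|\,dr$), which is $\lesssim\sum_{j\ge0}\|\phi(t)\|_{L^2(\{2^jT\le|x|\le2^{j+1}T\})}$. To sum this dyadic series and extract the gain $T^{-1/2}$ uniformly in $t$ one cannot use $\|\phi(t)\|_{L^2}=\|Q\|_{L^2}$ alone; instead I would invoke the quantitative almost-periodicity of $\phi$ (Theorem \ref{almostpms}, the uniform charge-localization bound (\ref{apms1}), together with the soliton-like spatial decay furnished by the non-sharp decomposition Proposition \ref{nonsharpdecomp}) to see that the tails $\|\phi(t)\|_{L^2(\{|x|\ge R\})}$ decay fast enough in $R$, uniformly in $t$, for the Cauchy--Schwarz summation to close. \textbf{This step — obtaining the uniform-in-$t$ bound $\int_T^\infty|u(t,r)|\,dr\lesssim_\phi T^{-1/2}$ — is the main obstacle}; everything else in the lemma is routine bookkeeping with the pointwise bounds above.
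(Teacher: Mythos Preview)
Your approach is essentially the paper's: split $F$ into its pieces, use charge conservation for $\|A_\theta\|_{L^\infty}$, the $r^{-2}$ decay of $A_0$, Proposition~\ref{weaklocal} for $\|\varphi_{>c}\nabla\phi\|_{L^2}$, and Strauss' inequality for $|u(t,r)|\lesssim_\phi r^{-1/2}$ on $\{r\ge\tfrac1{16}\}$. For (\ref{nonest1})--(\ref{nonest2}) and for the cubic, electric, and $A_\theta^2$ contributions to (\ref{nonest3})--(\ref{nonest4}) your argument is complete and matches the paper.

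The ``main obstacle'' you flag, however, is an artifact of a typo in the displayed equations (\ref{CSS}) and (\ref{nl}). Expanding $\tfrac{1}{r^2}\bm{D}_\theta^2\phi=-\tfrac{(m+A_\theta)^2}{r^2}\phi$ in (\ref{1csspol}) shows that the magnetic cross term is $\tfrac{2m}{r^2}A_\theta\phi$, not $\tfrac{2m}{r}A_\theta\phi$; the paper's own proof of this lemma silently uses the correct version, writing $N_2(u)=\tfrac{2m}{r^2}A_\theta u$. With the extra factor of $r^{-1}$ the term carries the same $r^{-2}$ spatial decay as $A_0\phi$ and $\tfrac{1}{r^2}A_\theta^2\phi$, so your Cauchy--Schwarz argument
\[
\|\varphi_{>T}\tfrac{2m}{r^2}A_\theta u\|_{L^1_x}\lesssim\|A_\theta\|_{L^\infty}\,\|\varphi_{>T}r^{-2}\|_{L^2_x}\,\|u\|_{L^2_x}\lesssim_\phi T^{-1}
\]
closes immediately, and likewise for the $\partial_r u$ piece in (\ref{nonest4}). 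No appeal to almost-periodicity is needed --- which is fortunate, since the route you sketch through Theorem~\ref{almostpms} could not produce a \emph{uniform-in-$t$} tail bound: the compactness modulus in (\ref{apms1}) localizes only on the $t$-dependent scale $C(\eta)/N(t)$, and $N(t)$ is not assumed bounded below at this stage of the argument.
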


 Next we prove Proposition \ref{freqdecayest} with these estimates, and defer their proof till the end of this subsection.
\begin{proof}[Proof of Proposition \ref{freqdecayest}]
  We begin by projecting $\phi$ onto incoming and outgoing waves, and use Duhamel's formula backward Lemma \ref{duhamel} (since $\phi$ is almost periodic from Theorem \ref{almostpms}) and forward in time, respectively.
      \begin{align}
   & \varphi_{>1} P_N \phi(t) = \varphi_{>1} P^+_N \phi(t) + \varphi_{>1} P^-_N \phi(t) \nonumber\\
    =  &    \varphi_{>1} P^-_N e^{it \Delta} \phi_0 \label{decomp1-1}\\
    & + i \int^\infty_0  \varphi_{>1} P^+_N e^{-i \tau \Delta} F(\phi(t + \tau)) d\tau \label{decomp1-2}\\
   &- i \int^t_0  \varphi_{>1} P^-_N e^{i \tau \Delta} F(\phi(t - \tau)) d\tau\label{decomp1-3}
  \end{align}
  The last two integral should be understood in the weak $L^2_x$ sense, for which our estimate still valid thanks to Fatou's property. Since $\varphi_{>1} P_N^-$ is a bounded operator from $L^2$ to $L^2$ by Proposition \ref{inoutest} (4),  the first term is controlled by Strichartz estimate
  \[ \|(\ref{decomp1-1})\|_{L^2_x} \lesssim \| \tilde{P}_N \phi_0 \|_{L^2}. \]
  Now, we only give the details of estimate of (\ref{decomp1-2}), and (\ref{decomp1-3}) will be done in the similarly way thus omitted. We start by decomposing\footnote{We remark that the partitioning point $\frac{N\tau}{2}$ in (\ref{decomp2-3}) and (\ref{decomp2-4}) may be modified into $\frac{N\tau}{C}$ with $C \ll 1$, in order that the stationary phase in Proposition \ref{inoutest} (2) will not be touched. Here we take $C = 2$ for simplicity.}
    \begin{align}
    i\varphi_{>1} \int^\infty_0 & P^+_N e^{-i \tau \Delta} F(\phi(t + \tau)) d\tau \nonumber   \\
     =& i\varphi_{>1} \int^{N^{-1}}_0P^+_N e^{-i \tau \Delta} \varphi_{\le \frac{1}{2}} F(\phi(t + \tau)) d\tau \label{decomp2-1}\\
    &+  i\varphi_{>1} \int^{N^{-1}}_0 P^+_N e^{-i \tau \Delta} \varphi_{> \frac{1}{2}} F(\phi(t + \tau)) d\tau  \label{decomp2-2} \\
    &+  i\varphi_{>1}\int_{N^{-1}}^\infty P^+_N e^{-i \tau \Delta} \varphi_{\le \frac{N\tau}{2}} F(\phi(t + \tau)) d\tau\label{decomp2-3}\\ 
    &+ i\varphi_{>1} \int_{N^{-1}}^\infty P^+_N e^{-i \tau \Delta} \varphi_{> \frac{N\tau}{2}} F(\phi(t + \tau)) d\tau  \label{decomp2-4}
  \end{align}

  The remaining part of this proof is devoted to estimating these four pieces.
  
  \cu{Estimate of (\ref{decomp2-1}).}   
  
  This is the tail term considering the decaying estimate of $P^+_N e^{-it\Delta}$ as in Proposition \ref{inoutest}. To make use of the $\dot{H}^1$ control away from the origin, we use the equation $(i\partial_t + \Delta)\phi = F(\phi)$ to replace $F(\phi)$ by $(i\partial_t + \Delta) \phi$. Note that
  \begin{align}
   (\ref{decomp2-1})=& i\varphi_{>1}P^+_N \int^{\frac{1}{N}}_0 e^{-i\tau \Delta} (\varphi_{\le \frac{1}{2}} (i\partial_\tau + \Delta)\phi(t + \tau)) d\tau \nonumber \\
  =& -\varphi_{>1}P^+_N e^{-i\frac{1}{N}\Delta} (\varphi_{\le \frac{1}{2}} \phi(t+\frac{1}{N})) \\
  &+ \varphi_{>1}P^+_N (\varphi_{\le \frac{1}{2}} \phi(t)) \\
  &- i\varphi_{>1}P^+_N \int^{\frac{1}{N}}_0 e^{-i\tau \Delta} \phi(t+\tau) \Delta \varphi_{\le \frac{1}{2}} d\tau\\
  &- 2i\varphi_{>1}P^+_N \int^{\frac{1}{N}}_0 e^{-i\tau \Delta} \nabla \phi(t+\tau) \cdot \nabla \varphi_{\le \frac{1}{2}} d\tau \label{decomp2-1-1}
  \end{align}
  where the second equality used commutator $[\partial_\tau, e^{-i\tau \Delta}] = -i\Delta$ and integration by parts on $\tau$.
  These four terms are going to be estimated in the same manner, so we only estimate (\ref{decomp2-1-1}) for instance. By Proposition \ref{inoutest}, the kernel obeys the estimate
  \begin{align*}
      |[\varphi_{> 1} P^+_N e^{-i\tau\Delta} \kappa_{\le \frac{1}{2}}] (x, y) | &\lesssim N^2 \langle N|x| - N|y| \rangle^{-n} \kappa_{|x| > 1} \kappa_{|y| \le \frac{1}{2}} \\
     & \lesssim N^{2-n/2} \langle x - y \rangle^{-n/2}, \qquad \tau \in \left[ 0, \frac{1}{N^2}\right]  \\
      |[\varphi_{> 1} P^+_N e^{-i\tau\Delta} \kappa_{\le \frac{1}{2}}] (x, y) |& \lesssim N^2 \langle N^2 \tau + N|x| - N|y| \rangle^{-n} \kappa_{|x| > 1} \kappa_{|y| \le \frac{1}{2}}, \\
     & \lesssim N^2 \langle N^2\tau + N|x| + N|y| \rangle^{-n} \kappa_{|x| > 1} \kappa_{|y| \le \frac{1}{2}} \\
     & \lesssim N^{2-n/2} \langle x - y \rangle^{-n/2}, \qquad \tau \in \left[\frac{1}{N^2}, \frac{1}{N}\right],  
  \end{align*}
  for any $n > 0$, where $\kappa_{\le \frac{1}{2}}$ is a characteristic function. So Young's inequality tells us 
  \begin{align*}
  \| (\ref{decomp2-1-1}) \|_{L^2_x}& = 2\left\| \int^{\frac{1}{N}}_0  \varphi_{>1} P^+_N e^{-i\tau \Delta} \kappa_{\le \frac{1}{2}} \nabla \phi(t+\tau) \cdot \nabla \varphi_{\le \frac{1}{2}} d\tau  \right\|_{L^2_x} \\
  &\lesssim \frac{1}{N} \left\| \int [\varphi_{>1} P^+_N e^{-i\tau \Delta} \kappa_{\le \frac{1}{2}}](x, y) (\nabla \phi(t+\tau) \cdot \nabla \varphi_{\le \frac{1}{2}})(y) dy \right\|_{L^\infty_\tau L^2_x ([0, \frac{1}{N}] \times \real^2)} \\
  & \lesssim N^{-10} \| \nabla \phi(t+\tau) \cdot \nabla \varphi_{\le \frac{1}{2}} \|_{L^\infty_\tau L^2_x ([0, \frac{1}{N}] \times \real^2)} \\
  &\lesssim N^{-10} \| \varphi_{>\frac{1}{4}} \partial_r \phi\|_{L^\infty_t L^2_x ([0, \infty) \times \real^2)} \lesssim_\phi 1.
  \end{align*}
  The other terms have similar estimate, so on the whole,
  \[ \|(\ref{decomp2-1})\|_{L^2_x} \lesssim N^{-9}. \]
  \cu{Estimate of (\ref{decomp2-2}).}   
  
  We use nonlinearity estimate (\ref{nonest1}) and (\ref{nonest2}) here.
  \begin{align*}
    \| (\ref{decomp2-2})\|_{L^2_x} &\lesssim \| \tilde{P}_N \varphi_{> \frac{1}{2}} F(\phi) \|_{L^1_\tau L^2_x([t, t+\frac{1}{N}] \times \real^2)} \\
    &\lesssim \frac{1}{N}  \| \tilde{P}_N \varphi_{> \frac{1}{2}} F(\phi) \|_{L^\infty_\tau L^2_x([0, \infty) \times \real^2)} \\
    & \lesssim \frac{1}{N^2}  \| \tilde{P}_N |\nabla| (\varphi_{> \frac{1}{2}} F(\phi)) \|_{L^\infty_\tau L^2_x([0, \infty) \times \real^2)} \\
    & \lesssim \frac{1}{N^2} \bigg[ \left\| \varphi_{> \frac{1}{4}} F(\phi) \right\|_{L^\infty_\tau L^2_x([0, \infty) \times \real^2)} + 
      \left\| \varphi_{> \frac{1}{4}}\frac{1}{r} F(\phi) \right\|_{L^\infty_\tau L^2_x([0, \infty) \times \real^2)} 
      \\ &\quad + \left\| \varphi_{> \frac{1}{4}} \partial_r F(\phi) \right\|_{L^\infty_\tau L^2_x([0, \infty) \times \real^2)}\bigg]
     \\
     &\lesssim_\phi \frac{1}{N^2}
  \end{align*}
  \cu{Estimate of (\ref{decomp2-3}).}
  
  This is still a tail term, so we estimate similarly as (\ref{decomp2-1}). Since
  \begin{align*}
   \| (\ref{decomp2-3})\|_{L^2_x} \le& \limsup_{T \rightarrow +\infty} \left\| i\varphi_{>1}\int_{N^{-1}}^T P^+_N e^{-i \tau \Delta} \varphi_{\le \frac{N\tau}{2}} F(\phi(t + \tau)) d\tau \right\|_{L^2_x} \\
   :=& \limsup_{T\rightarrow +\infty} \| (\ref{decomp2-3})_T \|_{L^2_x},
   \end{align*}
   we aim to derive a uniform estimate for $(\ref{decomp2-3})_T$. Using the equation, we have
     \begin{align}
   (\ref{decomp2-3})_T=& i\varphi_{>1}P^+_N \int_{\frac{1}{N}}^T e^{-i\tau \Delta} (\varphi_{\le \frac{N\tau}{2}} (i\partial_\tau + \Delta)\phi(t + \tau)) d\tau \nonumber \\
  =& -\varphi_{>1}P^+_N e^{-iT\Delta} (\varphi_{\le \frac{NT}{2}} \phi(t+T)) \nonumber\\
  & +\varphi_{>1}P^+_N e^{-i\frac{1}{N}\Delta} (\varphi_{\le \frac{1}{2}} \phi(t+\frac{1}{N}))  \nonumber\\
  &- i\varphi_{>1}P^+_N \int_{\frac{1}{N}}^T e^{-i\tau \Delta} \phi(t+\tau) \Delta \varphi_{\le \frac{N\tau}{2}} d\tau \nonumber\\
  &- 2i\varphi_{>1}P^+_N \int_{\frac{1}{N}}^T e^{-i\tau \Delta} \nabla \phi(t+\tau) \cdot \nabla \varphi_{\le \frac{N\tau}{2}} d\tau \nonumber\\
  & - \varphi_{>1} P^+_N \int^T_{\frac{1}{N}} e^{-i\tau\Delta} \left( \frac{r}{N\tau^2} \partial_r \varphi_{\le \frac{1}{2}}\left( \frac{r}{N\tau} \right)\phi(t+\tau)  \right) d \tau \label{decomp2-3-1}
  \end{align}
  Again we have decay estimate of the kernel for $\tau \ge \frac{1}{N}$
        \begin{align*}
      |[\varphi_{> 1} P^+_N e^{-i\tau\Delta} \kappa_{\le \frac{N\tau}{2}}] (x, y) |& \lesssim N^d \langle N^2\tau + N|x| - N|y| \rangle^{-n} \kappa_{|x| > 1} \kappa_{|y| \le \frac{N\tau}{2}}, \\
     & \lesssim N^2 \langle N^2\tau + N|x| + N|y| \rangle^{-n} \kappa_{|x| > 1} \kappa_{|y| \le \frac{N\tau}{2}} \\
    &  \lesssim N^{2-n} (N\tau)^{-n/2} \langle x - y \rangle^{-n/2}.
       \end{align*}
      \begin{align*}
       & \| (\ref{decomp2-3-1})\|_{L^2_x} \\
        =& \left\| \int_{\frac{1}{N}}^T \int_{\real^2} \varphi_{>1} P^+_N e^{-i\tau \Delta} \kappa_{\le \frac{N\tau}{2}} (x, y)\left( \frac{|y|}{N\tau^2} \partial_r \varphi_{\le \frac{1}{2}}\left( \frac{|y|}{N\tau} \right)\phi(t+\tau, y)  \right) dy d\tau   \right\|_{L^2_x} \\
   \lesssim& N^{-10} \int_{\frac{1}{N}}^T (N\tau)^{-6}  \left\|  \left( \frac{|y|}{N\tau^2} \partial_r \varphi_{\le \frac{1}{2}}\left( \frac{|y|}{N\tau} \right)\phi(t+\tau, y)  \right)   \right\|_{L^2_x ([0, \infty) \times \real^2)}d\tau \\
  \lesssim &N^{-10}  \int_{\frac{1}{N}}^T (N\tau)^{-6} \tau^{-1} \left\| \phi\right\|_{L^\infty_{t} L^2_x ([0, \infty) \times \real^2)}d\tau \\
  \lesssim &N^{-9}.
    \end{align*}
    
    To conclude, we have
    \[ \| (\ref{decomp2-3}) \|_{L^2_x} \le \limsup_{T \rightarrow +\infty} \| (\ref{decomp2-3})_T \|_{L^2_x} \lesssim N^{-9}. \]
      
  \cu{Estimate of (\ref{decomp2-4}).}
  
    We first further decompose (\ref{decomp2-4}) by decomposing $ \varphi_{> \frac{N\tau}{2}}  =  \varphi_{> \frac{N\tau}{4}}   \varphi_{> \frac{N\tau}{2}} $ and then introducing a frequency projection
    \begin{align}
      (\ref{decomp2-4}) =  i\varphi_{>1} \int_{N^{-1}}^\infty P^+_N e^{-i \tau \Delta} \varphi_{> \frac{N\tau}{4}} P_{> \frac{N}{8}} \varphi_{> \frac{N\tau}{2}} F(\phi(t + \tau)) d\tau \label{decomp2-4-1}\\
      +  i\varphi_{>1} \int_{N^{-1}}^\infty P^+_N e^{-i \tau \Delta}  \varphi_{> \frac{N\tau}{4}} P_{\le \frac{N}{8}} \varphi_{> \frac{N\tau}{2}} F(\phi(t + \tau)) d\tau \label{decomp2-4-2}
    \end{align}
    (\ref{decomp2-4-1}) can be estimated by weighted Strichartz estimate Lemma \ref{weightedstrichartz}, and estimate of nonlinearity (\ref{nonest3}) and (\ref{nonest4}). 
  \begin{align*}
   \| (\ref{decomp2-4-1}) \|_{L^2_x} 
 \lesssim & \left\| \int_\frac{1}{N}^\infty e^{-i\tau \Delta}\varphi_{> \frac{N\tau}{4}} P_{> \frac{N}{8}} \varphi_{> \frac{N\tau}{2}}F(\phi(t+\tau)) d\tau \right\|_{L^2_x} \\
     \lesssim & \left\| |x|^{-\frac{1}{2}}\varphi_{> \frac{N\tau}{4}} P_{> \frac{N}{8}} \varphi_{> \frac{N\tau}{2}} F(\phi(t+\tau)) \right\|_{L^{\frac{4}{3}}_\tau L^1_x ([\frac{1}{N}, \infty) \times \real^2)}  \\
     \lesssim & \left\|  (N\tau)^{-\frac{1}{2}} \left\| P_{> \frac{N}{8}} \varphi_{> \frac{N\tau}{2}} F(\phi(t+\tau)) \right\|_{L^1_x} \right\|_{L^{\frac{4}{3}}_\tau([\frac{1}{N}, \infty))} \\
     \lesssim & N^{-\frac{3}{2}} \left\|  \tau^{-\frac{1}{2}} \left\| P_{> \frac{N}{8}} |\nabla| \varphi_{> \frac{N\tau}{2}} F(\phi(t+\tau)) \right\|_{L^1_x } \right\|_{L^{\frac{4}{3}}_\tau([\frac{1}{N}, \infty))} \\
    \lesssim&  N^{-\frac{3}{2}} \Bigg\|  \tau^{-\frac{1}{2}} 
    \bigg[ \bigg\| \varphi_{> \frac{N\tau}{4}} (N\tau)^{-1}F(\phi(t+\tau)) \bigg\|_{L^1_x }
    + \bigg\| \varphi_{> \frac{N\tau}{4}}\frac{1}{r} F(\phi(t + \tau)) \bigg\|_{L^1_x } \\    
     & \quad  + \bigg\| \varphi_{> \frac{N\tau}{4}}\partial_r (F(\phi(t + \tau))) \bigg\|_{L^1_x } \bigg]
       \Bigg\|_{L^{\frac{4}{3}}_\tau([\frac{1}{N}, \infty))} \\
     \lesssim  &N^{-\frac{3}{2}} \left\|  \tau^{-\frac{1}{2}} (N\tau)^{-\frac{1}{2}} \right\|_{L^{\frac{4}{3}}_\tau([\frac{1}{N}, \infty))} \lesssim N^{-\frac{7}{4}}.
  \end{align*}
  And (\ref{decomp2-4-2}) easily follows from mismatch estimate Lemma \ref{mismatch2} and (\ref{nonest1}).
  \begin{align*}
     \| (\ref{decomp2-4-2}) \|_{L^2_x} & \lesssim \| \tilde{P}_N \varphi_{> \frac{N\tau}{4}} P_{\le \frac{N}{8}} \varphi_{> \frac{N\tau}{2}} F(\phi(t + \tau)) \|_{L^1_\tau L^2_x ([\frac{1}{N}, \infty) \times \real^2} \\
    & \lesssim N^{-10} \| (N\tau)^{-10} \|_{L^1_\tau([\frac{1}{N}, \infty))} \| \varphi_{> \frac{1}{4}} F(\phi) \|_{L^\infty_t L^2_x([0, \infty) \times \real^2)} \\
    & \lesssim N^{-11}.
  \end{align*}
     Thus we've proved 
     \[ \| (\ref{decomp2-4}) \|_{L^2_x}\lesssim N^{-\frac{7}{4}}. \]  
  Collecting estimates of (\ref{decomp2-1})-(\ref{decomp2-4}), we've shown $\|(\ref{decomp1-2}) \|_{L^2_x} \lesssim N^{-1-\frac{3}{4}} $. Similar estimate holds for (\ref{decomp1-3}), and thus we've completed the proof of Proposition \ref{freqdecayest}.
  
\end{proof}

Finally let's complete the proof of nonlinear estimates, based on charge conservation and the weak localization of kinetic energy  in  Proposition \ref{weaklocal}.

\begin{proof}[Proof of Lemma \ref{nonlinearest}]
    Note that $F (\phi) = F(u)e^{im\theta}$. For simplicity, we replace $F(\phi)$ by $F(u)$ as the target estimates will remain the same. Define 
    \begin{align*}
     A_0^{(1)}u := - u\int_r^\infty \frac{A_\theta}{s^2} |u|^2 sds, \quad  A_0^{(2)}u := -u \int_r^\infty \frac{m}{s^2} |u|^2 sds
    \end{align*}
    so that $A_0 = A_0^{(1)} + A_0^{(2)}$. And then define 
    \begin{align*}
      N_1 (u) = -|u|^2 u,\quad N_2 (u) = \frac{2m}{r^2}A_\theta u, \quad N_3(u) = \frac{A_\theta ^2}{r^2} u,\\
      N_4(u) := A_0^{(1)}u, \quad   N_5(u) := A_0^{(2)}u. \quad\quad\,\,\,
    \end{align*}
    Thus $F(u) = \sum_{i=1}^5 N_i(u)$. We will deal with each $N_i(u)$ separately. 
    
    Under the assumption of Proposition \ref{freqdecayest}, we know $\phi(t)$ satisfies Proposition \ref{weaklocal} hence 
    \beq \| \phi_{> \frac{1}{16}} \nabla \phi \|_{L^\infty_t L^2_x([0, \infty) \times \real^2)} \lesssim_\phi 1. \eeq
    And hence by Strauss estimate (\ref{strauss1}), for $r \ge \frac{1}{16}$, 
  \begin{align}
    \| u(t, r) \|_{L^\infty_t([0, \infty))} &\le \| \partial_r u(t) \|_{L^\infty_t L^2_x([0, \infty) \times \{ |x| \ge \frac{1}{16}\})}^{\frac{1}{2}} \| u \|_{L^\infty_t L^2_x([0, \infty) \times \{ |x| \ge \frac{1}{16}\})}^{\frac{1}{2}} r^{-\frac{1}{2}} \notag\\&\lesssim_u r^{-\frac{1}{2}} \label{linftydecay}
\end{align}
     In particular,
     \beq \|\varphi_{> \frac{1}{8}} u\|_{L^\infty_t L^\infty_x([0,\infty) \times \real^2)} \lesssim_u 1. \label{linftybdd}\eeq
     In the following estimate, our tools will be  (\ref{linftydecay}) (\ref{linftybdd})  and conservation of charge 
     \beq \| u \|_{L^\infty_t L^2_x([0, \infty) \times \real^2)} \lesssim_u 1.\label{l2uniformdecay} \eeq
     We begin with estimate on $A_\theta$, $A_0$.
     \begin{align}
     \|A_\theta \|_{L^\infty_t L^\infty_x} &\lesssim \| u\|^2_{L^\infty_t L^2_x} \lesssim_u 1, \\
     \|\varphi_{> \frac{1}{16}}A_0^{(2)} r^{2} \|_{L^\infty_t L^\infty_x} &\lesssim  \| \varphi_{> \frac{1}{8}} u\|^2_{L^\infty_t L^2_x} \lesssim_u 1,\label{a0decay1}\\
     \|\varphi_{> \frac{1}{8}}A_0^{(1)} r^2 \|_{L^\infty_t L^\infty_x} &\lesssim \|A_\theta \|_{L^\infty_t L^\infty_x} \|\varphi_{> \frac{1}{8}}A_0^{(2)} r^2 \|_{L^\infty_t L^\infty_x} \lesssim_u 1. \label{a0decay2}
     \end{align}
     
     Now (\ref{nonest1}) and (\ref{nonest2}) easily follows these bounds. We show estimates of $N_1$ and $N_2$ as examples. The following estimates are valid uniformly in time.
     \begin{align*}
       \| \varphi_{> \frac{1}{4}} N_1(u) \|_{L^2_x}& \le \| \varphi_{> \frac{1}{8}} u \|_{L^2_x} \| \varphi_{> \frac{1}{8}} u \|^2_{L^\infty_x} \lesssim_u 1,  \\
       \| \varphi_{> \frac{1}{4}} \partial_r (N_1(u)) \|_{L^2_x}& \lesssim \| \varphi_{> \frac{1}{8}} \partial_r u \|_{L^2_x} \| \varphi_{> \frac{1}{8}} u \|^2_{L^\infty_x} \lesssim_u 1, \\
       \| \varphi_{> \frac{1}{4}} N_2(u) \|_{L^2_x} &\le \left\| \varphi_{> \frac{1}{8}} u\frac{1}{r^2} \right\|_{L^2_x} \| A_\theta \|_{L^\infty_x}  \lesssim \| \varphi_{> \frac{1}{8}} u \|_{L^2_x} \| u \|_{L^2_x} \lesssim_u 1, \\
        \| \varphi_{> \frac{1}{4}}\partial_r (N_2(u)) \|_{L^2_x} &\lesssim \left\| \varphi_{>\frac{1}{4}}  \frac{|u|^2 u}{r^2} \right\|_{L^2_x} + \left\| \varphi_{>\frac{1}{4}}  \frac{A_\theta \partial_r u}{r^2} \right\|_{L^2_x} + \left\| \varphi_{>\frac{1}{4}} \frac{A_\theta u}{r^3} \right\|_{L^2_x}\\
        &\le  \| \varphi_{> \frac{1}{4}} N_1(u) \|_{L^2_x} + \| A_\theta\|_{L^\infty_x} \| \varphi_{> \frac{1}{8}} \partial_r u \|_{L^2_x} \lesssim_u 1.\\
     \end{align*}
     Next, for (\ref{nonest3}) and (\ref{nonest4}), we need to be more careful so as to gain enough spatial decay. So here we use (\ref{linftydecay}) instead of (\ref{linftybdd}). Firstly, for $N_1(u)$ 
     \begin{align*}
       \| \varphi_{> T} N_1(u) \|_{L^1_x}& \le \| \varphi_{> \frac{T}{2}} u \|^2_{L^2_x} \| \varphi_{> \frac{T}{2}} r^{\frac{1}{2}}u \|_{L^\infty_x} \| \varphi_{> \frac{T}{2}} r^{-\frac{1}{2}} \|_{L^\infty_x} \lesssim_u T^{-\frac{1}{2}},\\ 
       \| \varphi_{> T} \partial_r (N_1(u)) \|_{L^1_x}& \le \| \varphi_{> \frac{T}{2}} \partial_r u \|_{L^2_x} \| \varphi_{> \frac{T}{2}} u \|_{L^2_x} \| \varphi_{> \frac{T}{2}} r^{\frac{1}{2}}u \|_{L^\infty_x} \| \varphi_{> \frac{T}{2}} r^{-\frac{1}{2}} \|_{L^\infty_x}  \\&\lesssim_u T^{-\frac{1}{2}}.
     \end{align*}
     For other nonlocal nonlinearities, we make use of their $r^{-2}$ spatial decay (see (\ref{a0decay1}), (\ref{a0decay2}) for decay of $A_0$). We merely show estimates for $N_2(u)$ and the others comes in the similar way.
     \begin{align*}
     \| \varphi_{> T} N_2(u) \|_{L^1_x}& \lesssim \| A_\theta  \|^2_{L^\infty_x} \| \varphi_{> \frac{T}{2}} u \|_{L^2_x} \| \varphi_{> \frac{T}{2}} r^{-2} \|_{L^2_x} \lesssim_u T^{-1},\\
      \| \varphi_{> T}\partial_r (N_2(u)) \|_{L^1_x} &\lesssim \left\| \varphi_{>T}  \left(\frac{N_1(u)}{r^2} + \frac{A_\theta \partial_r u}{r^2} + \frac{N_2(u)}{r}\right) \right\|_{L^1_x}\\
      & \lesssim_u T^{-\frac{5}{2}} + T^{-1} + T^{-2} \lesssim T^{-1}.
     \end{align*}
     
\end{proof}

This completes the proof of Proposition \ref{freqdecayest}, and thus Theorem \ref{rigidinfselfdual} with arguments in \S \ref{4.2}.\newline

\textbf{Acknowledgement.} We are grateful to Kihyun Kim and Soonsik Kwon for helpful discussions and remarks. The authors are supported by the NSF of China (No. 12071010, 11631002).

\appendix
\section{Elliptic theory for CSS}\label{AppA}
\subsection{Self-dual case}

\begin{proof}[Proof of Proposition \ref{varcharselfdual}]
  The positivity of energy follows easily with 
  \beq E[\phi_0] = \frac{1}{2}\int | \bm{D}_+ \phi_0 |^2 \ge 0.
  \eeq
  For the rigidity of null energy solution, note that $E[\phi_0] = 0$ implies $D_+ \phi_0 =0$, i.e.
  \beq \partial_r \phi_0 - \frac{m+A_\theta[\phi_0]}{r} \phi_0 = 0, \qquad \forall r > 0.
  \eeq
  Since $A_\theta[u]$ is real, this implies its radial part $u$ satisfies
  \beq \partial_r |u|^2 - \frac{2m + 2A_\theta[u]}{r} |u|^2 = 0. \label{selfdualeq} \eeq
  We claim that $u$ is not non-zero function implies $|u| > 0$ for all $r > 0$. Otherwise, there exists $r_0 \in (0, \infty)$ such that $u(r_0) = 0$, then Gronwall's inequality shows
  \[ |u|^2(r) \le \text{exp}\left(\int^r_{r_0} \frac{2m + 2 A_\theta[u](r')}{r'} dr' \right)|u|^2(r_0) = 0,\qquad \forall\, r >0, \]
   hence $|u| \equiv 0$, which is a contradiction. 

  Then, by representing value of $u$ in polar coordinates, we see that the phase part is always constant, i.e. $u(r) = \rho(r) e^{i \gamma_0}$ for $\rho$ some real-valued function, and without loss of generality, we can assume $\rho > 0$.
  
  Next by change of coordinates $v := r^{-2m}\rho^2$, $w:=\text{log } v$. From (\ref{selfdualeq}), we see $w$ satisfies the second-order ODE on $[0, +\infty)$ with one boundary condition.
  \begin{align*}
    w'' + \frac{1}{r}w' + r^{2m}e^w &=0\\
    w'(0) &= 0
  \end{align*}
 Each initial value $w(0) \in \real$ uniquely determines a solution, the corresponding $\rho$ of which is the scaling of the soliton $Q$. Combined with local wellposedness theory, we complete the proof.
\end{proof}

\subsection{Non-self-dual case}

Recall that when $g > 1$, the energy here is 
\beq E[\phi_0] = \frac{1}{2} \| \bm{D}_x \phi_0 \|_{L^2}^2 - \frac{g}{4} \| \phi_0\|_{L^4}^4 =  \frac{1}{2} \| \bm{D}_+ \phi_0 \|_{L^2}^2 - \frac{g-1}{4} \| \phi_0\|_{L^4}^4. \eeq
The threshold charge is defined as minimization of $L^2$ norm of m-equivariant function with non-positive energy.
\begin{lem}[{\cite[Lemma 7.2, Lemma 7.5]{liu2016global}}]\label{lema1}
Define 
\beq c_{m,g} = \inf \{ \| \phi \|_{L^2} : \phi \in H^1_m \backslash \{ 0 \}, E[\phi] \le 0\}, \label{nsd2} \eeq
Then $c_{m, g} > 0$, and
\beq c_{m, g}= \inf \{ \| \phi \|_{L^2} : \phi \in H^1_m \backslash \{ 0 \}, E[\phi] = 0\}. \label{nsd1} \eeq
\end{lem}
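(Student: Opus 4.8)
The plan is to prove the two assertions of the lemma separately and independently: first positivity, $c_{m,g}>0$, and then the equivalence of the constraints $E[\phi]\le 0$ and $E[\phi]=0$ in the variational problem. I will not address finiteness of $c_{m,g}$ (i.e. nonemptiness of the admissible set): it is not needed for either assertion, and in the paper it is anyway guaranteed by the elliptic construction of the solitons $\phi^{(m,g)}$ in \cite{byeon2012gaugeNLS,byeon2016standing} (whose energy is zero by a Pohozaev identity).

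For $c_{m,g}>0$ I would combine the diamagnetic inequality with the Gagliardo--Nirenberg inequality in $\real^2$. For $\phi=ue^{im\theta}\in H^1_m$ one has, pointwise, $|\nabla|\phi||=|\partial_r|u||\le|\partial_r u|\le|\bm{D}_x\phi|$, hence $\|\nabla|\phi|\|_{L^2}\le\|\bm{D}_x\phi\|_{L^2}$; therefore $\|\phi\|_{L^4}^4=\||\phi|\|_{L^4}^4\le C_{GN}\|\phi\|_{L^2}^2\|\nabla|\phi|\|_{L^2}^2\le C_{GN}\|\phi\|_{L^2}^2\|\bm{D}_x\phi\|_{L^2}^2$, which gives
\[ E[\phi]\ \ge\ \tfrac12\|\bm{D}_x\phi\|_{L^2}^2\Big(1-\tfrac{gC_{GN}}{2}\|\phi\|_{L^2}^2\Big). \]
Since $\bm{D}_x\phi=0$ would force $|\phi|$ constant and hence $\phi\equiv0$ (as $\phi\in L^2(\real^2)$), any nonzero $\phi$ with $E[\phi]\le0$ must satisfy $1-\tfrac{gC_{GN}}{2}\|\phi\|_{L^2}^2\le0$, i.e. $\|\phi\|_{L^2}^2\ge 2/(gC_{GN})$; thus $c_{m,g}\ge\sqrt{2/(gC_{GN})}>0$.

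For the equality of the two infima only the bound $\inf\{\|\phi\|_{L^2}:\phi\in H^1_m\setminus\{0\},\,E[\phi]=0\}\le c_{m,g}$ requires proof, the reverse being immediate from $\{E=0\}\subset\{E\le0\}$. Given $\phi=ue^{im\theta}\in H^1_m\setminus\{0\}$ with $E[\phi]\le0$, I would use the pure spatial dilation $\phi_\lambda:=u(\lambda\,\cdot)e^{im\theta}$, which stays in $H^1_m\setminus\{0\}$, satisfies $\|\phi_\lambda\|_{L^2}^2=\lambda^{-2}\|\phi\|_{L^2}^2$, and obeys $A_\theta[\phi_\lambda](r)=\lambda^{-2}A_\theta[\phi](\lambda r)$. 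Tracking the scaling of each term of the energy (the homogeneous $\dot H^1_m$-part is $\lambda$-invariant, the $mA_\theta$ cross term and $\|\cdot\|_{L^4}^4$ scale like $\lambda^{-2}$, and the $A_\theta^2$-term like $\lambda^{-4}$) yields
\[ E[\phi_\lambda]\ =\ A+B\lambda^{-2}+C\lambda^{-4} \]
with $\phi$-dependent constants $A=\tfrac12(\|\partial_r\phi\|_{L^2}^2+\|\tfrac{m}{r}\phi\|_{L^2}^2)$, $C=\tfrac12\|\tfrac{A_\theta[\phi]}{r}\phi\|_{L^2}^2$ and $B=m\int_{\real^2}\tfrac{A_\theta[\phi]}{r^2}|\phi|^2\,dx-\tfrac{g}{4}\|\phi\|_{L^4}^4$. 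For $\phi\neq0$ one has $A,C>0$, and since $A_\theta[\phi]\le0$ also $B<0$. Hence $\lambda\mapsto E[\phi_\lambda]$ is continuous on $(0,\infty)$, equals $E[\phi]\le0$ at $\lambda=1$, and tends to $A>0$ as $\lambda\to\infty$, so the intermediate value theorem produces $\lambda_*\ge1$ with $E[\phi_{\lambda_*}]=0$; then $\phi_{\lambda_*}$ is admissible for the $\{E=0\}$-problem and $\|\phi_{\lambda_*}\|_{L^2}=\lambda_*^{-1}\|\phi\|_{L^2}\le\|\phi\|_{L^2}$. Taking the infimum over all such $\phi$ finishes the argument.

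This lemma is not deep and there is no real obstacle; the only points needing care are the strict positivity of $A$ and $C$ (each vanishes only for $\phi\equiv0$: $A=0$ forces $\partial_r u\equiv0$, or $\tfrac{m}{r}u\equiv0$ when $m\ge1$, hence $u\equiv0$; $C=0$ forces $A_\theta[\phi]\,u\equiv0$ a.e., and $A_\theta[\phi](r)$ is strictly negative once $\int_0^r|u|^2\,s\,ds>0$, so again $u\equiv0$) and the exact bookkeeping of the scaling exponents of the nonlocal term $A_\theta[\phi]$ in the energy. The genuinely nontrivial input, the finiteness $c_{m,g}<\infty$, is external to this lemma and is supplied by \cite{byeon2012gaugeNLS,byeon2016standing}.
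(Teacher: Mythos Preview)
The paper does not supply its own proof of this lemma; it is quoted directly from \cite{liu2016global} (Lemmas~7.2 and~7.5 there). Your argument is correct and standard: the diamagnetic inequality $\|\nabla|\phi|\|_{L^2}\le\|\bm{D}_x\phi\|_{L^2}$ combined with the two-dimensional Gagliardo--Nirenberg inequality gives $c_{m,g}>0$, and the pure spatial dilation $\phi_\lambda=u(\lambda\,\cdot)e^{im\theta}$ together with the intermediate value theorem yields the equality of the two infima. For the latter step you in fact only need $A>0$; your observations $C>0$ and $B<0$, while correct, are not actually used in the IVT argument. Your remark that finiteness of $c_{m,g}$ is external input supplied by \cite{byeon2012gaugeNLS,byeon2016standing} is also accurate. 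As a point of comparison, the paper's Lemma~\ref{lema3} (which \emph{is} proved) uses the multiplicative scaling $\beta\phi$ with $\beta<1$ rather than spatial dilation to obtain the closely related strict inequality $\|\phi\|_{L^2}>c_{m,g}$ when $E[\phi]<0$.
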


The minimizer of (\ref{nsd1}) is a standing wave solution to (\ref{CSS}). 

\begin{lem}[{\cite[Lemma 7.7]{liu2016global}}]\label{lema2}
  Let $\phi \in H^1_m \backslash \{ 0 \}$  with $E[\phi] = 0, \|\phi\|_{L^2} = c_{m, g} $. Then there exists $\alpha \in \real$ such that $\psi(t,x) := e^{i\alpha t} \phi(x)$ is a solution of (\ref{CSS}).
\end{lem}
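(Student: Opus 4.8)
The plan is to realize $\phi$ as a constrained minimizer and invoke the Lagrange multiplier rule. By Lemma \ref{lema1}, the hypotheses $E[\phi]=0$, $\|\phi\|_{L^2}=c_{m,g}$ say precisely that $\phi$ minimizes the charge functional $J[\psi]:=\|\psi\|_{L^2}^2$ over the constraint set $\mathcal M:=\{\psi\in H^1_m:\ E[\psi]=0\}$. First I would record that $J$ and $E$ are $C^1$ functionals on $H^1_m$; the only point that is not completely routine is the differentiability of the nonlocal contributions $\int \frac1{r^2}A_\theta[\psi]^2|\psi|^2$ and $\int A_0[\psi]|\psi|^2$ to $E$, which follows from the explicit formulas for $A_\theta[\psi]$, $A_0[\psi]$ together with the embedding $H^1_m\hookrightarrow L^4$. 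Since $\phi\neq 0$ we have $J'[\phi]=2\phi\neq 0$, so the minimum is genuinely constrained (not a free critical point of $J$).

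Next I would apply the Lagrange multiplier theorem in Banach space: there exist real $\mu_0,\mu_1$, not both zero, with $\mu_0 J'[\phi]+\mu_1 E'[\phi]=0$ in $(H^1_m)^*$. The crucial computation is that, tested against $m$-equivariant variations, $E'[\phi]$ equals a fixed nonzero constant multiple of $\Delta\phi-F(\phi)$, with $F$ the nonlinearity (\ref{nl}); this is exactly the computation underlying the stationary equation (\ref{station}), in which varying $A_\theta[\psi]$ inside $\int \frac1{r^2}A_\theta^2|\psi|^2$ and interchanging the order of integration reconstitutes the $\frac{2m}{r}A_\theta\psi$ and $A_0\psi$ terms. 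Combining with $J'[\phi]=2\phi$, the multiplier identity becomes $c_0\,\phi=c_1\,(\Delta\phi-F(\phi))$ for real $c_0,c_1$ not both zero. If $c_1=0$ then $c_0\phi=0$, forcing $c_0=0$, a contradiction; hence $c_1\neq 0$ and
\[ \Delta\phi-\alpha\phi=F(\phi),\qquad \alpha:=c_0/c_1\in\real. \]
(Reality of $\alpha$ can also be read off a posteriori by pairing this identity with $\phi$ in $L^2$, every resulting integral being real.)

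Finally I would verify that $\psi(t,x):=e^{i\alpha t}\phi(x)$ solves (\ref{CSS}). Since $A_\theta$ and $A_0$ depend on $\psi$ only through $|\psi|^2=|\phi|^2$, one has $A_\theta[\psi]=A_\theta[\phi]$ and $A_0[\psi]=A_0[\phi]$, hence $F(\psi)=e^{i\alpha t}F(\phi)$; on the other hand $(i\partial_t+\Delta)\psi=e^{i\alpha t}(-\alpha\phi+\Delta\phi)=e^{i\alpha t}F(\phi)=F(\psi)$, which is (\ref{CSS}). Restricting to equivariant test functions in the variational step costs nothing since every term of the equation is itself $m$-equivariant, and if a classical (smooth) solution of (\ref{station}) is wanted, standard elliptic regularity bootstraps $\phi\in H^1_m$ to smoothness.

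The step I expect to be the main obstacle is the identification of $E'[\phi]$ with a constant multiple of $\Delta\phi-F(\phi)$: carrying the variation of $E$ through the nonlocal gauge potentials $A_\theta[\psi]$ and $A_0[\psi]$, and keeping careful track of which pieces of the derivative are local and which are nonlocal, is the delicate bookkeeping here. The abstract multiplier rule and the final substitution are routine.
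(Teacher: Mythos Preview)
Your approach is correct and matches the paper's: the remark following Lemma~\ref{lema2} explicitly says the proof is the Euler--Lagrange computation for the constrained problem~(\ref{nsd1}), i.e.\ exactly the Lagrange multiplier argument you outline, with $E'[\phi]$ identified (via the Hamiltonian structure $i\partial_t u=\delta E/\delta u$) with $-\Delta\phi+F(\phi)$. The paper additionally notes that the multiplier $\lambda$ cannot vanish (else $\phi=0$), giving $\alpha=-1/\lambda\neq 0$, which refines your conclusion from $\alpha\in\real$ to $\alpha\in\real\setminus\{0\}$.
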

\begin{rmk}
  This result comes from minimization problem of charge with restraint $E[\phi] = 0$. If $E'[\phi] = 0$, we get the case $\alpha = 0$, and otherwise the minimizer satisfies Euler-Lagrangian equation with $\alpha \neq 0$. We can further exclude the case $\alpha < 0$ by  \cite[Proposition 4.2]{byeon2012gaugeNLS} and   \cite[Proposition 3.3]{byeon2016standing} .
\end{rmk}

Next we exclude the charge minimizer with negative energy. 
\begin{lem}\label{lema3}
  If $\phi \in H^1_m$ with $ E[\phi] < 0 $, then $\| \phi\|_{L^2} > c_{m, g}$.
\end{lem}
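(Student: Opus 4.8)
The plan is to use the amplitude rescaling $\phi \mapsto \mu\phi$ with $\mu\in(0,1]$ a positive constant: this multiplies the charge by $\mu$ while keeping the energy strictly negative for $\mu$ near $1$, and thereby produces a nontrivial $H^1_m$ competitor for $c_{m,g}$ of strictly smaller charge than $\phi$.

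First I would record how $E$ transforms under this rescaling. Writing $\phi=ue^{im\theta}$, the homogeneity $A_\theta[\mu u]=\mu^2 A_\theta[u]$ together with $\|\bm{D}_x(\mu\phi)\|_{L^2}^2=\|\partial_r(\mu u)\|_{L^2}^2+\big\|\tfrac{m+A_\theta[\mu u]}{r}\mu u\big\|_{L^2}^2$ and $\|\mu\phi\|_{L^4}^4=\mu^4\|\phi\|_{L^4}^4$ yields, after collecting powers of $\mu$,
\[
2E[\mu\phi]=A\mu^2+B\mu^4+C\mu^6,
\]
where $A=\|\partial_r u\|_{L^2}^2+m^2\big\|\tfrac{u}{r}\big\|_{L^2}^2$, $B=2m\int\tfrac{A_\theta[u]}{r^2}|u|^2-\tfrac{g}{2}\|u\|_{L^4}^4$, and $C=\int\tfrac{A_\theta[u]^2}{r^2}|u|^2$ (in particular $A,C\ge 0$ and $B<0$, although the signs are not needed below). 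Thus $\mu\mapsto E[\mu\phi]$ is continuous on $(0,1]$, polynomial in $\mu^2$ after factoring $\mu^2$, and equals $E[\phi]<0$ at $\mu=1$.

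Consequently, by continuity there is $\mu_0\in(0,1)$ with $E[\mu_0\phi]<0$, hence $E[\mu_0\phi]\le 0$. Since $\mu_0\phi=\mu_0 u\,e^{im\theta}$ is a nontrivial element of $H^1_m$, the definition (\ref{nsd2}) of $c_{m,g}$ gives
\[
c_{m,g}\le\|\mu_0\phi\|_{L^2}=\mu_0\|\phi\|_{L^2}<\|\phi\|_{L^2},
\]
which is the assertion. I do not expect a genuine obstacle here: the only points needing care are the routine bookkeeping of powers of $\mu$ in the energy and the observation that $\mu_0\phi$ is a legitimate competitor in (\ref{nsd2}), i.e.\ nonzero and in $H^1_m$. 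If one prefers to avoid the abstract continuity statement, the displayed formula shows $2E[\mu\phi]\mu^{-2}=A+B\mu^2+C\mu^4$ is negative at $\mu=1$, hence negative on a left neighbourhood of $1$, producing the same $\mu_0$.
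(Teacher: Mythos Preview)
Your proof is correct and uses essentially the same idea as the paper: amplitude rescaling $\phi\mapsto\mu\phi$ with $\mu<1$ close to $1$ keeps the energy negative by continuity and strictly lowers the $L^2$ norm, giving a competitor that forces $c_{m,g}<\|\phi\|_{L^2}$. Your version is slightly more explicit (you expand $E[\mu\phi]$ as a polynomial in $\mu^2$) and argues directly rather than by contradiction, but the mechanism is identical.
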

\begin{proof}
  From (\ref{nsd2}), we know  $\| \phi\|_{L^2} \ge c_{m, g} $. So by contradiction, suppose there exists $\phi \in H^1_m$ such that $E[\phi] < 0$ and $\|\phi \|_{L^2} = c_{m, g} $. Note that for $\beta \in \real$, 
  \[ E[\beta \phi] = \frac{\beta^2}{2} \int_{\real^2} \left[ |\partial_r \phi|^2 + \frac{1}{r^2} \left( m^2 - \beta^2 \frac{1}{2} \int^r_0 |\phi|^2 sds \right)^2 |\phi|^2 - \beta^2 \frac{g}{2} |\phi|^4 \right] dx \] 
  continuously depends on $\beta$, so we can choose $\beta < 1$ very close to $1$, such that $E[\beta \phi] < 0$, then \[ \| \beta \phi\|_{L^2} = \beta c_{m, g} < c_{m, g}\]
  which contradicts to (\ref{nsd2}).
\end{proof}

Combining these two lemmas, we get the variational characterization of null-energy critical-charge solution in non-self-dual case.

\begin{proof}[Proof of Proposition \ref{varcharnonselfdual}]
The positivity of energy comes directly from Lemma \ref{lema3}. For the rigidity, only to note that if $E[\phi_0] = 0$ and $\phi_0$ is non-zero, by Lemma \ref{lema1} we also have $\| \phi_0\|_{L^2} \ge c_{m, g} $, while the condition says $\| \phi_0 \|_{L^2} \le c_{m,g}$. So the equivalence holds, and Lemma \ref{lema2} ensures $\phi_0$ to generate a stationary wave or static solution. 

Finally we prove the exponential decay of any stationary wave satisfying (\ref{station}) with $\alpha > 0$. Take 
\[ c_u (x) := \alpha + \frac{m^2}{r^2} + \frac{2m}{r^2}A_\theta[u] + A_0[u] + \frac{1}{r^2} A_\theta [u]^2 - g|u|^2. \]
From Stauss estimate $|u(r)| \lesssim r^{-1}$ and $A_0[u](r) \rightarrow 0$ as $r \rightarrow \infty$, we have $\mathcal{L} u := (\Delta - c_u(x)) u = 0$ and $c_u(x) \in (\frac{\alpha}{2}, \frac{3\alpha}{2})$ on $B_R^c$ with $R$ large enough. So we can construct $v := e^{-a|x|}$ with $a$ small enough such that $\mathcal{L}v \le (\Delta - \frac{\alpha}{2}) v \le 0$. Now we take $C$ large enough such that $u-v < 0$ on $\partial B_R$, and the comparison theorem implies that $u \le Cv = Ce^{-a|x|}$.
\end{proof}

\section{Covariant $H^1_m$ norm and equivariant Sobolev space}\label{AppB}
We denote $B_r := \{ x \in \real^2 : |x| < r \}$ and $B_r^c = \real^2 \backslash B_r$ throughout this section.
  \begin{proof}[Proof of Lemma \ref{equilem1}]
  Note that $A_\theta[f](r) = -\frac{1}{4\pi} \| f\|^2_{L^2(B_r)}$ is decreasing. If $\|f\|_{L^2}^2 \le 2\pi m$, then $A_\theta[f] \ge -\frac{m}{2}$ and obviously we have (\ref{lem311}). Otherwise, take 
  \[ R := \sup_{r \ge 0} \left\{ A_\theta[f](r) \ge  -\frac{m}{2}\right\} \in (0, \infty). \]
  Then we have 
  \[ \| f \|^2_{L^2(B_R)} = 2\pi m. \]
  And
  \begin{align}
    \left\| \frac{1}{r}(m + A_\theta[f]) f \right\|_{L^2(B_R)}^2 &\ge \frac{m^2}{4} \left\| \frac{1}{r} f \right\|_{L^2(B_R)}^2 \label{b1} \\
    \left\| \frac{1}{r}(m + A_\theta[f]) f \right\|_{L^2(B_R)}^2 &\ge \frac{1}{R^2} \| f \|^2_{L^2(B_R)} \frac{m^2}{4} = \frac{\pi m^3}{2R^2} \label{b2}\\
    \left\| \frac{1}{r} f \right\|_{L^2(B_R^c)}^2 &\le \frac{1}{R^2} \| f\|_{L^2}^2 \label{b3}
  \end{align}
  (\ref{b2}) and (\ref{b3}) implies that 
  \beq \left\| \frac{1}{r} f \right\|_{L^2(B_R^c)}^2 \le \frac{2\|f\|_{L^2}^2}{\pi m^3} \left\| \frac{1}{r}(m + A_\theta[f]) f \right\|_{L^2(B_R)}^2 \label{b4}\eeq
  The estimate (\ref{lem311}) for $\|f\|_{L^2}^2 \ge 2\pi m$ is established by combining (\ref{b1}) and (\ref{b4}).
  
  For (\ref{lem312}), we use Strauss' estimate (\ref{strauss1}) to estimate $A_\theta$: for $f \in H^1_m$, 
  \beq  |A_\theta[f] (r)| \le \frac{1}{2} \int_0^r \| f r^{\frac{1}{2}}\|_{L^\infty}^2 ds \lesssim \| \partial_r f \|_{L^2} \| f \|_{L^2} r.  \label{b5} \eeq
  Thus 
  \[ \left\|  \frac{A_{\theta}[f]}{r} f \right\|_{L^2} \le \left\|  \frac{A_{\theta}[f]}{r}  \right\|_{L^\infty}  \|f \|_{L^2} \lesssim \|f\|_{L^2}^2 \|\partial_r f\|_{L^2}. \]
  
  Combine (\ref{lem311}), (\ref{lem312}), we get (\ref{lem313}). For $m > 0$, 
  \begin{align*}
    \| f \|^2_{\dot{H}^1_m}& = \| \partial_r f\|_{L^2}^2 + \left\|\frac{m}{r}f\right\|_{L^2}^2 \\
    & \lesssim_{m, \|f\|_{L^2}} \| \partial_r f\|_{L^2}^2 +  \left\| \frac{m+A_{\theta}[f]}{r} f \right\|_{L^2}^2 = \| \bm{D}_x f\|^2_{L^2}, \\
     \| \bm{D}_x f\|^2_{L^2}& = \| \partial_r f\|_{L^2}^2 +  \left\| \frac{m+A_{\theta}[f]}{r} f \right\|_{L^2}^2 \\
     &\le  \| \partial_r f\|_{L^2}^2 + (1 + \frac{1}{4\pi m} \|f\|_{L^2}^2)\left\| \frac{m}{r} f\right\|_{L^2}^2 \lesssim  \| f \|^2_{\dot{H}^1_m}.
  \end{align*}
  For $m=0$, the "less than" direction is immediate, while the other comes from (\ref{lem312}). 
\end{proof}

\begin{proof}[Proof of Lemma \ref{equilem2}]
   Firstly, since 
   \[ \partial_r = \frac{x_1}{|x|} \partial_1 + \frac{x_2}{|x|} \partial_2,\]
   and $ \frac{x_1}{|x|},  \frac{x_2}{|x|}$ are $L^\infty$ function, we immediately have (\ref{lem321}) from the weak convergence of $\partial_1 f_n$ and $\partial_2 f_n$.
   
   Next we show (\ref{lem323}) for $m \ge 1$. 
   
   Let $\| f_n \|_{H^1}, \|f\|_{H^1} \le M $. Note that $f_n \rightharpoonup f$ in $L^2$, and that $\frac{1}{r}$ is bounded away from the origin, we have $\forall R > 0$
   \beq \frac{1}{r} f_n \rightharpoonup \frac{1}{r} f \qquad \text{in}\,\, L^2(B_R^c), \label{b6}\eeq
   Now we can show that for any $g \in L^2$, 
   \beq \left(\frac{1}{r} f_n , g\right)_{L^2} \rightarrow \left(\frac{1}{r} f, g\right)_{L^2} ,\qquad \text{as}\,\,n \rightarrow \infty.\label{b7}\eeq
   Notice that $\frac{1}{r} f_n, \frac{1}{r}f$ are uniformly bounded in $L^2$ by M (for $m \ge 1$). We then have, $\forall R > 0$, from (\ref{b6})
   \begin{align*}
     \left| \left( \frac{1}{r}(f_n - f), g \right)_{L^2} \right| & \le  \left| \left( \frac{1}{r}(f_n - f), g \right)_{L^2(B_R^c)} \right| +  \left| \left( \frac{1}{r}(f_n - f), g \right)_{L^2(B_R)} \right| \\
     & \le o_n(1) + 2M \| g \|_{L^2(B_R)}.
   \end{align*} 
   From the arbitrariness of $R > 0$, (\ref{b7}) is confirmed. That's the weak convergence in (\ref{lem323}). 
   
   Finally, we prove (\ref{lem322}) for $m \ge 0$. (\ref{lem324}) directly follows (\ref{lem321})-(\ref{lem323}).
   
   Note that $\frac{A_\theta[f_n]}{r}, \frac{A_\theta[f]}{r}$ are uniformly $L^\infty$ bounded away from the origin,  using same strategy as above, we only need to prove the following two things
   \begin{enumerate}[(1)]
     \item $ \frac{A_\theta[f_n]}{r} f_n $, $\frac{A_\theta[f]}{r} f$ are uniformly bounded in $L^2$.
     \item For all $R > 0$, 
     \[ \frac{A_\theta[f_n]}{r} f_n \rightharpoonup \frac{A_\theta[f_n]}{r} f \qquad \text{in}\,\,B_R^c. \]
   \end{enumerate}
   (1) follows (\ref{lem312}) in Lemma \ref{equilem1}, noticing that $f_n$, $f$ are uniformly bounded in $H^1$.
   
   For (2), we again take a $g \in L^2$ and try to derive the convergence of the inner product. From the $m$-equivariance assumption, without loss of generosity, we may take $g \in L^2_{\text{rad}}$. Moreover, we can take $g \in C^\infty_{c, \text{rad}}$ as a test function by the density argument.
   \begin{align*}
     \left( \frac{1}{r}A_\theta[f_n] f_n, g \right)_{L^2(B^c_R)}  &=  \int_R^\infty \frac{1}{r} f_n(r) \bar{g}(r) \int_0^r |f_n|^2(s) sds \, rdr   \\ 
     &= \int_0^\infty |f_n|^2(s) s \left( \int_{\max\{R, s\}}^\infty f_n(r) \bar{g}(r) dr\right)ds \\
     &=: \int_0^\infty |f_n|^2(s) s\, G_{n, R} (s) ds
   \end{align*}
   Then from the weak convergence (\ref{b6}) for general $m \ge 0$, we have a pointwise convergence
   \[ G_{n, R}(s) = \left( \frac{1}{r} f_n, g \right)_{L^2(B^c_{\max\{R, s\}})}  \rightarrow  \left( \frac{1}{r} f, g \right)_{L^2(B^c_{\max\{R, s\}})} =: G_R(s), \forall s \ge 0.  \]
   So $G_{n, R}$ and $G_R(s)$ are uniformly bounded in $L^\infty$. And compact support of $g$ implies that $G_{n, R}, G_R$ also compactly supported. So They are uniformly bounded in $L^p$ for any $p \in [1, \infty]$. By dominated convergence,
   \[ G_{n, R} \rightarrow G_R,\qquad \text{in} \,\, L^2. \]
   Also from compact embedding $H^1_{\text{rad}}(\real^2) \hookrightarrow L^4(\real^2)$, we have
   \[ f_n \rightarrow f,\qquad \text{in}\,\, L^4. \]
   Now 
   \begin{align*}
      & \left| \left( \frac{1}{r}A_\theta[f_n] f_n, g \right)_{L^2(B^c_R)} - \left( \frac{1}{r}A_\theta[f] f, g \right)_{L^2(B^c_R)} \right|\\
     \le & \left| (G_{n, R}, |f_n|^2)_{L^2} - (G_{R}, |f|^2)_{L^2} \right|\\
     \le & \left| (G_{n, R} - G_R, |f_n|^2)_{L^2} \right| +  \left| (G_R f_n, f_n - f)_{L^2} \right| +  \left| (G_R \bar{f}, \overline{f_n - f})_{L^2} \right| \\
     \le & \| G_{n, R} - G_R \|_{L^2} \| f_n \|_{L^4}^2 + \| G_R \|_{L^2} (\|f_n \|_{L^4} + \|f\|_{L^4}) \|f_n - f\|_{L^4}\\   & \rightarrow 0,\qquad \text{as} \,\, n \rightarrow \infty.
   \end{align*}
   That finish the proof of (2) and conclude this lemma.
\end{proof}

\begin{proof}[Proof of Lemma \ref{equilem3}]
  Consider $\bm{D}_+ v_n - \bm{D}_+ v \rightarrow 0$ in $L^2$. Note that
   \begin{align*}
    \| \bm{D}_+ v_n &- \bm{D}_+ v \|_{L^2}^2 = \left\| \partial_r (v_n - v) - \frac{m}{r}(v_n - v) - \left( \frac{A_\theta [v_n] }{r} v_n -\frac{A_\theta [v] }{r} v \right) \right\|_{L^2}^2  \\
    \gtrsim & \left\| \partial_r (v_n - v) - \frac{m + A_\theta [v]}{r}(v_n - v) \right\|_{L^2}^2 -  \left\|  \frac{A_\theta [v_n] - A_\theta [v] }{r} v_n \right\|_{L^2}^2 \\
    :=& \uppercase\expandafter{\romannumeral1}_n - \uppercase\expandafter{\romannumeral2}_n
     \end{align*}
     
     And using integration by parts,
     \begin{align*}
    \uppercase\expandafter{\romannumeral1}_n   =&  \int_0^\infty \Bigg[| \partial_r (v_n - v)|^2 + \left| \frac{m + A_\theta [v]}{r}(v_n - v) \right|^2  
  - 2 \text{Re}\left(\overline{\partial_r (v_n - v)} (v_n - v) \frac{m + A_\theta [v]}{r} \right) \Bigg]rdr  \\
    =& \left\| \partial_r (v_n - v) \right\|_{L^2}^2 +  \left\| \frac{m + A_\theta [v]}{r}(v_n - v) \right\|_{L^2}^2 - \frac{1}{2} \int_0^\infty |v_n -v|^2 |v|^2 rdr  \\
    :=& \uppercase\expandafter{\romannumeral1}^1_n + \uppercase\expandafter{\romannumeral1}^2_n - \frac{1}{2}\uppercase\expandafter{\romannumeral1}_n^3
   \end{align*}
   From convergence in $L^2$ and $L^4$, and the uniform bound in $H^1$ for $v_n$ and $v$, we know 
   \begin{align*}
      \uppercase\expandafter{\romannumeral2}_n &\le \| A_\theta[v_n] - A_\theta[v] \|^2_{L^\infty} \left\|\frac{v_n}{r} \right\|^2_{L^2}\\
      &\lesssim \|v_n - v \|^2_{L^2} \| v_n + v \|^2_{L^2}  \left\|\frac{v_n}{r} \right\|^2_{L^2} \rightarrow 0 \\
       \uppercase\expandafter{\romannumeral1}^3_n &\lesssim \|v_n - v\|_{L^4}^2 \|v \|_{L^4}^2 \rightarrow 0
    \end{align*}
   Thus 
   \beq  \uppercase\expandafter{\romannumeral1}^1_n +  \uppercase\expandafter{\romannumeral1}_n^2 \rightarrow 0, \eeq
   which implies that 
   \beq   \partial_r v_n \rightarrow \partial_r v \qquad \text{in}\,\,L^2.\label{b9}\eeq
   And for $m \ge 1$, take $r_0 > 0$ to be \[ r_0 := \sup_{r \ge 0}\left\{ A_\theta[v](r) \ge - \frac{m}{2}\right\}>0, \]
   we have 
   \[ \frac{1}{r}v_n \rightarrow \frac{1}{r}{v} \qquad \text{in}\,\,L^2(\{|x| \le r_0 \}). \]
   Combined with $v_n \rightarrow v$ in $L^2$, we know for $m \ge 1$,
   \beq \frac{1}{r}v_n \rightarrow \frac{1}{r}{v} \qquad \text{in}\,\,L^2.\label{b10}\eeq
   (\ref{b9}) and (\ref{b10}) conclude the $\dot{H}^1$ convergence of $v_n$ to $v$.
\end{proof}

\bibliographystyle{siam}
\bibliography{Bib-1}

\end{document}